\numberwithin{equation}{section}
\newtheorem{theorem}{Theorem}[section]
\newtheorem{lemma}[theorem]{Lemma}
\newtheorem{corollary}[theorem]{Corollary}
\newtheorem{proposition}[theorem]{Proposition}
\newtheorem{definition}[theorem]{Definition}
\newtheorem{remark}[theorem]{Remark}
\begin{document}

\title{Multispeed Klein-Gordon systems in dimension three}

\author{Yu Deng}
\address{Courant Institute of Mathematical Sciences, New York University, New York, NY 10012}
\email{yudeng@cims.nyu.edu}

\thanks{}

\subjclass[2010]{}
\keywords{}

\date{}

\dedicatory{}

\begin{abstract}We consider long time evolution of small solutions to general multispeed Klein-Gordon systems on $\mathbb{R}\times\mathbb{R}^{3}$. We prove that such solutions are always global and scatter to a linear flow, thus extending partial results obtained previously in \cite{G11, GM11, IP12}. The main new ingredients of our method is an improved \emph{linear} dispersion estimate exploiting the asymptotic spherical symmetry of Klein-Gordon waves, and a corresponding \emph{bilinear} oscillatory integral estimate.
\end{abstract}

\maketitle


\setlength{\parskip}{0.8ex}
\section{Introduction}\label{intro3}
In this paper we consider a system of quasilinear, multispeed Klein-Gordon equations in space dimension three, namely
\begin{equation}\label{nlkg}(\partial_{t}^{2}-c_{\alpha}^{2}\Delta+b_{\alpha}^{2})u_{\alpha}=\mathcal{Q}_{\alpha}(u,\partial u,\partial^{2}u),\qquad 1\leq\alpha\leq d,\end{equation} where the speeds $c_{\alpha}$ and masses $b_{\alpha}$ are arbitrary positive numbers, and $\mathcal{Q}$ is a quadratic, quasilinear nonlinearity satisfying a suitable symmetry condition. 

We are mostly interested in the behavior of small solutions to (\ref{nlkg}), which has been studied in many previous works. A typical phenomenon that happens for such solutions is global regularity and scattering, which is suggested by the dispersive nature of linear Klein-Gordon equation; to justify this one needs to combine the standard energy estimate with a suitable pointwise decay estimate to pass to arbitrarily long times. The difficulty of this process depends on the dimension (with higher dimensional cases being generally easier), and on the exact combination of speeds $c_{\alpha}$ and masses $b_{\alpha}$ (which determines the structure of the so-called ``resonance set''). The easiest case is a single Klein-Gordon equation in $3D$, and was first settled independently by Klainerman \cite{Kl85} and Shatah \cite{S85} (see also Guo \cite{Gu98} for the Euler-Poisson system, which linearizes to Klein-Gordon). In the case of (\ref{nlkg}) with the \emph{same} speed ($c_{\alpha}=1$) and multiple masses, Hayashi-Naumkin-Wibowo \cite{HNW} obtained global existence in $3D$, and Delort-Fang-Xue \cite{DFX} proved the same result in $2D$ under a non-resonance assumption. Note that the above works are more or less based on physical space analysis.

The case of (\ref{nlkg}) with \emph{multiple} speeds, even in $3D$, is significantly more challenging. In recent years there have been works developing new, Fourier-based techniques to solve these problems (see a brief discussion in Section \ref{strategykg} below). These include for example Germain \cite{G11} where a specific case of (\ref{nlkg}) with the same mass was considered in $3D$, and Ionescu-Pausader \cite{IP12}, where the general system (\ref{nlkg}) was studied in $3D$, with speeds and masses $(b_{\alpha},c_{\alpha})$ satisfying two nondegeneracy conditions. Moreover, the techniques involved in all these works have also been used in analyzing many other dispersive equations or systems which are not necessarily Klein-Gordon, see for example  \cite{GM11}, \cite{GMS09}, \cite{GMS12}, \cite{GMS4}, \cite{IP11}, \cite{IP13}.

In this paper we will completely solve the small data problem of (\ref{nlkg}) in $3D$, removing the assumptions made in \cite{IP12}. More precisely, we will prove the following
\begin{theorem}\label{mainkg} Consider the system (\ref{nlkg}) in $\mathbb{R}\times\mathbb{R}^3$. We make the following assumptions.
\begin{enumerate}
\item The nonlinearity $\mathcal{Q}_{\alpha}$ in (\ref{nlkg}) has the form\begin{equation}\label{symmetrykg}\mathcal{Q}_{\alpha}(u,\partial u,\partial^{2}u)=\sum_{\beta,\gamma=1}^{d}\sum_{j,k,l=1}^{3}\big(A_{\alpha\beta\gamma}^{jk}u_{\gamma}+B_{\alpha\beta\gamma}^{jkl}\partial_{l}u_{\gamma}\big)\partial_{j}\partial_{k}u_{\beta}+\mathcal{Q}_{\alpha}'(u,\partial u),\end{equation} where $A$ and $B$ are tensors symmetric in $\alpha$ and $\beta$ with norm not exceeding one, and $\mathcal{Q}_{\alpha}'$ is an arbitrary quadratic form (of constant coefficient) of $u$ and $\partial u$;
\item The initial data $u(0)=g,\partial_{t}u(0)=h$ satisfy the bound\begin{equation}\label{initialkg}\|(g,\partial_{x}g,h)\|_{\mathcal{H}^{N}}+\|(g,\partial_{x}g,h)\|_{Z}\leq \varepsilon\leq\varepsilon_{0},\end{equation} where $N=1000$, the norm \begin{equation}\label{normkg}\|u\|_{\mathcal{H}^{k}}=\sup_{|\beta|\leq k}\|\Gamma^{\beta}u\|_{L^{2}},\qquad\Gamma=(\partial_{i},x_{i}\partial_{j}-x_{j}\partial_{i})_{1\leq i<j\leq 3},\end{equation} the $Z$ norm is defined in Definition \ref{normdef}, and $\varepsilon_0$ is small enough depending on $b_{\alpha}$ and $c_{\alpha}$.
\end{enumerate} 

Then we have the followings:
\begin{enumerate}
\item The system (\ref{nlkg}) has a unique solution $u$, with prescribed initial data, such that \begin{equation}u\in C_{t}^{1}\mathcal{H}_{x}^{N}(\mathbb{R}\times\mathbb{R}^{3}\to\mathbb{R}^{d}),\quad\partial_{x}u\in C_{t}^{0}\mathcal{H}_{x}^{N}(\mathbb{R}\times\mathbb{R}^{3}\to\mathbb{R}^{d});\end{equation}
\item We have energy bound and decay estimates \begin{equation}\label{bound01}\|u(t)\|_{\mathcal{H}^{N}}+\|(\partial_{x},\partial_{t})u(t)\|_{\mathcal{H}^{N}}\lesssim\varepsilon,\end{equation}
\begin{equation}\label{bound02}\sum_{|\mu|\leq N/2}\big(\|\Gamma^{\mu}u(t)\|_{L^{\infty}}+\|\Gamma^{\mu}(\partial_{x},\partial_{t})u(t)\|_{L^{\infty}}\big)\lesssim\frac{\varepsilon}{(1+|t|)\log^{20}(2+|t|)};\end{equation}
\item There exist $\mathbb{R}^{d}$ valued functions $w^{\pm}$ verifying the linear equation
\begin{equation}(\partial_{t}^{2}-c_{\alpha}^{2}\Delta+b_{\alpha}^{2})w_{\alpha}^{\pm}=0,\end{equation} such that we have scattering in slightly weaker spaces.
\begin{equation}\label{scat}\lim_{t\to\pm\infty}\big(\|u(t)-w^{\pm}(t)\|_{\mathcal{H}^{N-1}}+\|(\partial_{x},\partial_{t})(u(t)-w^{\pm}(t))\|_{\mathcal{H}^{N-1}}\big)=0.\end{equation}
\end{enumerate}
 \end{theorem}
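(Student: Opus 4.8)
\medskip
\noindent\textbf{Proof strategy.} The argument is a bootstrap run simultaneously in the high-order energy norm $\mathcal{H}^{N}$ and in the $Z$ norm, along the Fourier iteration / space-time resonance scheme of \cite{GMS09,IP12}; the genuinely new input is that the improved linear and bilinear estimates announced in the abstract are used precisely where \cite{IP12} needed its nondegeneracy hypotheses. \emph{First-order reduction and profiles.} Put $\Lambda_{\alpha}=\sqrt{-c_{\alpha}^{2}\Delta+b_{\alpha}^{2}}$ and $v_{\alpha}=(\partial_{t}-i\Lambda_{\alpha})u_{\alpha}$, so that $u_{\alpha}=-\Lambda_{\alpha}^{-1}\,\mathrm{Im}\,v_{\alpha}$ and \eqref{nlkg} becomes a first-order system $(\partial_{t}+i\Lambda_{\alpha})v_{\alpha}=\mathcal{N}_{\alpha}(v,\overline{v})$ with $\mathcal{N}_{\alpha}$ quadratic and quasilinear. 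With the profile $f_{\alpha}(t)=e^{it\Lambda_{\alpha}}v_{\alpha}(t)$ one has $\partial_{t}f_{\alpha}=e^{it\Lambda_{\alpha}}\mathcal{N}_{\alpha}$, and the Fourier-side Duhamel formula reads, schematically,
\begin{equation*}
\widehat{f_{\alpha}}(t,\xi)=\widehat{f_{\alpha}}(0,\xi)+\sum_{\beta,\gamma}\sum_{\iota_{1},\iota_{2}\in\{+,-\}}\int_{0}^{t}\!\!\!\int_{\mathbb{R}^{3}}e^{is\Phi(\xi,\eta)}\,m(\xi,\eta)\,\widehat{f_{\beta}^{\,\iota_{1}}}(s,\xi-\eta)\,\widehat{f_{\gamma}^{\,\iota_{2}}}(s,\eta)\,d\eta\,ds,
\end{equation*}
with phase $\Phi(\xi,\eta)=\Lambda_{\alpha}(\xi)-\iota_{1}\Lambda_{\beta}(\xi-\eta)-\iota_{2}\Lambda_{\gamma}(\eta)$ and a multiplier $m$ which, because of the quasilinear top-order terms $A\,u_{\gamma}\partial^{2}u_{\beta}$ and $B\,\partial u_{\gamma}\partial^{2}u_{\beta}$, may grow like the largest of the three frequencies involved; the symmetry of $A$ and $B$ in $(\alpha,\beta)$ is precisely what tames the resulting quadratic form in the energy identity.

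\emph{Bootstrap and the energy estimate.} Local well-posedness and continuation in $C^{1}_{t}\mathcal{H}^{N}_{x}$ are standard for such systems. On a time interval one assumes the a priori bounds $\|u(t)\|_{\mathcal{H}^{N}}+\|(\partial_{x},\partial_{t})u(t)\|_{\mathcal{H}^{N}}\leq\varepsilon_{1}$ and $\sup_{\alpha}\|f_{\alpha}(t)\|_{Z}\leq\varepsilon_{1}$ for a suitable $\varepsilon_{1}$ slightly larger than $\varepsilon$ (say $\varepsilon_{1}=\varepsilon^{2/3}$), and improves both to $\lesssim\varepsilon$. For the energy bound, commute the vector fields $\Gamma$ — which, being rotations and translations, commute with every $\Lambda_{\alpha}$ — through the system and run the standard $\mathcal{H}^{N}$ energy estimate; the symmetry of the principal coefficients turns the dangerous quasilinear terms into exact derivatives up to commutators, yielding $\tfrac{d}{dt}\|u\|_{\mathcal{H}^{N}}^{2}\lesssim\big(\sum_{|\mu|\leq N/2}\|\Gamma^{\mu}\partial^{2}u\|_{L^{\infty}}\big)\|u\|_{\mathcal{H}^{N}}^{2}$, so that once the pointwise decay below is available the right-hand side is time-integrable and Gronwall gives \eqref{bound01}.

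\emph{The $Z$-norm estimate (the crux).} The heart of the matter is to bound the double integral above by $\varepsilon_{1}^{2}$ in $Z$. One localizes dyadically in the output frequency $|\xi|\sim2^{k}$, in the inputs $|\xi-\eta|\sim2^{k_{1}}$, $|\eta|\sim2^{k_{2}}$, and in time $s\sim2^{m}$, and splits each piece according to the sizes of $\Phi$ and of $\nabla_{\eta}\Phi$. Where $|\Phi|$ is bounded below one integrates by parts in $s$ (a normal-form transformation): the boundary terms are controlled by the $Z$ norm, and the remaining bulk term, in which $\partial_{s}$ falls on a profile, is reabsorbed via the equation with a gain $2^{-m}$. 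Where $|\Phi|$ is small but $|\nabla_{\eta}\Phi|$ is not, one integrates by parts in $\eta$. The genuinely dangerous contribution comes from a neighborhood of the space-time resonant set $\{\Phi=0,\ \nabla_{\eta}\Phi=0\}$, where neither integration by parts is available; there one estimates the $\eta$-integral directly, using the improved linear dispersion bound for $e^{-is\Lambda_{\beta}}P_{k_{1}}(\cdot)$ together with the new bilinear oscillatory integral estimate, both of which gain over the crude bounds by exploiting the asymptotic spherical symmetry of Klein-Gordon waves, the relevant stationary sets of $\Phi$ in $\eta$ living on spheres of controlled curvature. This is exactly the region for which \cite{IP12} imposed its two nondegeneracy conditions, and establishing that the refined estimates are strong enough to handle the degenerate resonances those conditions excluded — vanishing curvature of the resonant sphere, tangencies caused by distinct speeds, and the like — is where I expect the real work, and the principal obstacle, to lie.

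\emph{Pointwise decay and scattering.} From the $Z$ and $\mathcal{H}^{N/2}$ bounds on $\Gamma^{\mu}f_{\alpha}$ with $|\mu|\leq N/2$, the improved dispersive estimate yields \eqref{bound02}, the logarithmic factor $\log^{-20}(2+|t|)$ reflecting the interplay of the limited number of available vector fields with the logarithmic width of the resonant regions; this closes the bootstrap. Finally, once $\sup_{\alpha}\|f_{\alpha}(t)\|_{Z}\lesssim\varepsilon$ is known, the Duhamel tail $\widehat{f_{\alpha}}(t)-\widehat{f_{\alpha}}(s)$ is bounded in $\mathcal{H}^{N-1}$ by an integral of the form $\int_{s}^{t}(1+\tau)^{-1}\log^{-20}(2+\tau)\,d\tau$, which tends to $0$ as $s,t\to\pm\infty$; hence $f_{\alpha}(t)$ converges in $\mathcal{H}^{N-1}$ to limits $f_{\alpha}^{\pm}$, and $w_{\alpha}^{\pm}(t)=-\Lambda_{\alpha}^{-1}\mathrm{Im}\,(e^{-it\Lambda_{\alpha}}f_{\alpha}^{\pm})$ solves the linear equation and satisfies \eqref{scat}.
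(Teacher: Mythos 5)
Your outline reproduces the paper's overall architecture faithfully: the first-order reduction $v_{\alpha}=(\partial_{t}-i\Lambda_{\alpha})u_{\alpha}$ and profiles $f_{\alpha}=e^{it\Lambda_{\alpha}}v_{\alpha}$, the bootstrap coupling the $\mathcal{H}^{N}$ energy (closed by symmetrization of the quasilinear terms plus Gronwall with the integrable decay rate $(1+t)^{-1}\log^{-20}(2+t)$) with a $Z$-norm estimate for the Duhamel bilinear term, the space-time resonance trichotomy (integrate by parts in $s$ where $|\Phi|\gtrsim1$, in $\eta$ where $|\nabla_{\eta}\Phi|\gtrsim1$, direct estimation near $\mathcal{R}$), and the scattering statement from integrability of the nonlinear forcing in $\mathcal{H}^{N-1}$. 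These steps match Sections 2--3 of the paper.

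However, the proposal has a genuine gap, and you name it yourself: the entire content of the theorem is the $Z$-norm estimate (Proposition \ref{main01}) in the absence of the two nondegeneracy conditions of \cite{IP12}, and your argument there is an assertion, not a proof. Concretely, three things are missing. First, you never specify the $Z$ norm; this is not a formality, since without $\det(\nabla_{\eta}^{2}\Phi)\neq0$ on $\mathcal{R}$ the strong norm $2^{j}\|f\|_{L^{2}}$ of \cite{IP12} cannot be propagated, and the propagable weight $2^{5j/6}$ does not by itself give $(1+t)^{-1-\delta}$ decay --- one must prove the improved dispersion estimate (\ref{impr}) trading one rotation vector field $\Omega$ for a gain of $2^{(1/2-\delta)j}$ (Proposition \ref{disper0}), and then close the bilinear estimate with that weaker norm, which in the paper requires the angular-restriction bilinear lemma (Lemma \ref{angular}) and a Schur-test argument in the medium-frequency range. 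Second, your claim that the refined linear and bilinear estimates handle the degenerate resonances is false for the most degenerate case: when $b_{\sigma}=b_{\mu}+b_{\nu}$ the point $(0,0)$ is resonant and $\Phi$ can vanish to fourth order there, and the paper explicitly notes that rotation vector fields do nothing for the rescaled-Schwartz low-frequency profiles that drive this case; instead one is forced to the log-modified weight $2^{j}2^{k/2}\langle j\rangle^{N_{0}}\|f\|_{L^{2}}$ together with a sharp (Gevrey-precision) integration-by-parts lemma (Proposition \ref{ips0}) and the case-by-case Taylor analysis of $\Phi$ near $(0,0)$ in Section \ref{lowlowtolow}. Third, the direct treatment near $\mathcal{R}$ at medium frequencies requires the fine structure of $\Phi^{+}$ (the factorization (\ref{qrad}), the expansion (\ref{a33}) at points where the resonant curvature degenerates), none of which is addressed. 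Since you explicitly defer exactly these points as ``where the real work lies,'' the proposal does not establish the theorem; it is a correct plan whose decisive steps remain to be supplied.
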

 \begin{remark}\label{remark9} Note that $\varepsilon_{0}$ depends on $(b_{\alpha},c_{\alpha})$ in a way that is not necessarily continuous. This is because the proof relies heavily on the structure of the nonlinear phase function\[\Phi(\xi,\eta)=\sqrt{c_{\alpha}^{2}|\xi|^2+b_{\alpha}^2}\pm\sqrt{c_{\beta}^{2}|\xi-\eta|^2+b_{\beta}^2}\pm\sqrt{c_{\gamma}^{2}|\eta|^2+b_{\gamma}^2},\] which could be sensitive to small changes in $c_{\alpha}$ and $b_{\alpha}$. For example, when $b_{\alpha}+b_{\beta}\neq b_{\gamma}$, the constant $K_{0}$ in Section \ref{choiceof} (which ultimately determines $\varepsilon_0$) will depend on the value of $\theta=|b_{\alpha}+b_{\beta}-b_{\gamma}|$, and our argument does not rule out the theoretical possibility that $K_{0}\to\infty$ when $\theta\to 0$ (on the other hand, if $b_{\alpha}+b_{\beta}= b_{\gamma}$ then we have a different argument that gives a finite $K_0$).
 
 We believe that this annoyance is of technical nature, and can be avoided by more careful analysis of $\Phi$. However, since this is not very relevant to the main idea of this paper, and will make the proof unnecessarily long, we have chosen to state Theorem \ref{mainkg} as it is.
 \end{remark}

\subsection{Notations and choice of parameters}\label{choice}
\subsubsection{Notations}\label{notatkg} We fix an even smooth function $\varphi:\mathbb{R}\to[0,1]$ that is supported in $[-8/5,8/5]$ and equals $1$ in $[-5/4,5/4]$. It is well-known (see for example \cite{Ro}) that we can assume $\varphi\in\mathcal{G}_{4}(\mathbb{R})$, where $\mathcal{G}_{\sigma}$ is the Gevrey space,\[\mathcal{G}_{\sigma}(\mathbb{R}^{d})=\bigg\{f\in C^{\infty}(\mathbb{R}^{d}):\sup_{x\in K}|\partial^{\alpha}f(x)|\leq C_{K}^{|\alpha|}(\sigma|\alpha|)!\bigg\},\qquad\forall\textrm{ compact }K\subset\mathbb{R}^{d}.\] Abusing notation, we will regard $\varphi$ as a radial function on $\mathbb{R}^{3}$ by  $\varphi(x)=\varphi(|x|)$; We then have $\varphi\in \mathcal{G}_{6}(\mathbb{R}^{3})$. Next, define
\begin{equation*}\varphi_k(x):=\varphi(x/2^k)-\varphi(x/2^{k-1})\quad\text{ for any }k\in\mathbb{Z},\qquad \varphi_I:=\sum_{m\in I\cap\mathbb{Z}}\varphi_m\quad\text{ for any }I\subseteq\mathbb{R},
\end{equation*} and define functions like $\varphi_{\leq B}$ similarly.
For any $x\in\mathbb{Z}$ define \[x_+:=\max(x,0),\qquad x_-:=-\min(x,0).\] Let the set
\begin{equation*}
\mathcal{J}:=\{(k,j)\in\mathbb{Z}\times\mathbb{Z}_+:\,k+j\geq 0\},
\end{equation*} and for any $(k,j)\in\mathcal{J}$ let
\begin{equation*}
\varphi_{j}^{(k)}(x)=\left\{\begin{aligned}
&\varphi_{j}(x),&\quad\text{if }j&>\max(0,-k);\\
&\varphi_{\leq j}(x),&\quad\text{if }j&=\max(0,-k).
\end{aligned}\right.
\end{equation*} Let $P_k$ denote the operator on $\mathbb{R}^3$ defined by the Fourier multiplier $\xi\to \varphi_k(\xi)$; 
let $P_{\leq B}$ (respectively $P_{I}$) denote the operators on $\mathbb{R}^3$ defined by the Fourier 
multipliers $\xi\to \varphi_{\leq B}(\xi)$ (respectively $\xi\to \varphi_{I}(\xi)$). For $(k,j)\in\mathcal{J}$ let
\begin{equation}\label{qjk}
(Q_{jk}f)(x):=\varphi^{(k)}_j(x)\cdot P_kf(x);\qquad f_{jk}:=Q_{jk}f, \qquad f_{jk}^{*}:=P_{[k-2,k+2]}Q_{jk}f.
\end{equation}

Let $\Lambda_{\alpha}=\sqrt{b_{\alpha}^{2}-c_{\alpha}^{2}\Delta}$ be the linear phase, and define \[\Lambda_{\alpha}(\xi):=\sqrt{c_{\alpha}^{2}|\xi|^{2}+b_{\alpha}^{2}},\quad1\leq\alpha\leq d;\qquad b_{-\alpha}=-b_{\alpha},\quad c_{-\alpha}=c_{\alpha},\quad\Lambda_{-\alpha}=-\Lambda_{\alpha}.\]
Let $\mathcal{P}=\mathbb{Z}\cap([1,d]\cup[-d,-1])$; for $\sigma,\mu,\nu\in \mathcal{P}$, we define the associated nonlinear phase
\begin{equation}\label{phasedef}
\Phi_{\sigma\mu\nu}(\xi,\eta):=\Lambda_{\sigma}(\xi)-\Lambda_{\mu}(\xi-\eta)-\Lambda_{\nu}(\eta),
\end{equation} 
and the corresponding parallel phase
\begin{equation*}
\Phi^{+}_{\sigma\mu\nu}(\alpha,\beta):=\Phi_{\sigma\mu\nu}(\alpha e,\beta e)=\Lambda_{\sigma}(\alpha)-\Lambda_{\mu}(\alpha-\beta)-\Lambda_{\nu}(\beta),
\end{equation*}
where $e\in\mathbb{S}^{2}$ and $\alpha,\beta\in\mathbb{R}$.

For later purposes, we also need the spherical harmonics decomposition, which is described below. Let $(r,\theta,\varphi)$ be polar coordinates for $\xi$ and $Y_{q}^{m}$ be spherical harmonics, and note $\Delta_{\mathbb{S}^{2}}Y_{q}^{m}=-q(q+1)Y_{q}^{m}$. For any function $f$, we have the expansion\begin{equation}\label{decomp}f(\xi)=\sum_{q=0}^{\infty}\sum_{m=-q}^{q}f_{q}^{m}(r)Y_{q}^{m}(\theta,\varphi),\end{equation}  and may define accordingly\begin{equation}\label{spherical}S_{l}f:=\sum_{q=0}^{\infty}\sum_{m=-q}^{q}\varphi_{l}(q)f_{q}^{m}(r)Y_{q}^{m}(\theta,\varphi)\end{equation} for $l\geq 1$, and with $\varphi_{0}$ replaced by $\varphi_{\leq0}$ for $l=0$. Note that $S_{l}$ commutes with each $P_{k}$ and $Q_{jk}$.
\subsubsection{Choice of parameters}\label{choiceof}
Throughout the proof, we will fix some parameters as follows:
\begin{equation}\label{param}
\begin{aligned}
N&=1000, &\quad \delta&=1/990,&N_{0}&=10^{10};\\
D_{0}&\gg_{\mathcal{B}}1,&K_{0}&\gg_{D_{0}}1,&\varepsilon_{0}&\ll_{K_{0}}1,
\end{aligned}
\end{equation}where $\mathcal{B}:=\{(b_{\alpha},c_{\alpha})\}$. Note that $\delta=1/(N-10)$. In addition, let $A$ denote any large absolute constant such that $A\ll N_{0}$ (say $A\sim 10^5$), whose exact value may vary at different places; in the same way, let $o$ denote any small constant, so for example $o\ll\delta^{4}$.
\subsection{Description of the methods}\label{strategykg} 
\subsubsection{Energy estimates and decay in $L^{\infty}$}
Letting $v_{\sigma}=(\partial_{t}-i\Lambda_{\sigma})u_{\sigma}$, we can reduce (\ref{nlkg}) to \begin{equation}\label{diag}(\partial_{t}+i\Lambda_{\sigma})v_{\sigma}=\mathcal{N}_{\sigma}(v,v),\quad\nu\in\mathcal{P},\end{equation} where $\mathcal{N}_{\sigma}$ is some quasilinear quadratic term. The starting point of our analysis is an energy estimate of form \begin{equation}\label{energy0}\partial_{t}\mathcal{E}\approx\int_{\mathbb{R}^{3}}\partial^{N}v\cdot\partial^{N}v\cdot\partial^{2}v,\qquad\mathcal{E}\approx\|v\|_{H^{N}}^{2}.\end{equation} To obtain this, which is needed even in local theory, one need to perform suitable symmetrization which relies on the assumption that the nonlinearities $\mathcal{Q}_{\alpha}$ are symmetric.

Next, from (\ref{energy0}) and Gronwall we can estimate the energy $\mathcal{E}=\mathcal{E}(t)$ by\[\mathcal{E}(t)\lesssim \mathcal{E}(0)\exp\bigg(\int_{0}^{t}\|\partial^{2}v\|_{L^{\infty}}\bigg).\] To bound the energy globally, it then suffices to study decay estimates of form \begin{equation}\label{decay98}\|\partial^{2}v(t)\|_{L^{\infty}}\lesssim(1+|t|)^{-1-\delta}.\end{equation}
\subsubsection{Space localization and the $Z$ norm}
The idea of obtaining (\ref{decay98}) is to introduce the \emph{profile} \[f_{\sigma}(t)=e^{it\Lambda_{\sigma}}v_{\sigma}(t)\]by taking back the linear flow. Under the scattering ansatz, $f_{\sigma}$ is expected to settle down (thus uniformly bounded) for long time, and also enjoy localization in space. This localization is captured in a carefully designed ``$Z$-norm'' which was used in \cite{IP12}; see also similar localization bounds in \cite{G11} and \cite{GM11}.  One then has the linear dispersion estimate \begin{equation}\label{decaykg}\|v_{\sigma}(t)\|_{L^{\infty}}=\|e^{-it\Lambda_{\sigma}}f_{\sigma}(t)\|_{L^{\infty}}\lesssim (1+|t|)^{-1-\delta}\|f_{\sigma}(t)\|_{Z},\end{equation} provided that the $Z$ norm is chosen correctly.

 Proving that $\|f_{\sigma}(t)\|_{Z}$ is uniformly bounded is achieved by using the Duhamel formula\begin{equation}\label{duhamel}\widehat{f_{\sigma}}(t,\xi)=\widehat{f_{\sigma}}(0,\xi)+\int_{0}^{t}\int_{\mathbb{R}^{3}}e^{{{i}}s\Phi_{\sigma\mu\nu}(\xi,\eta)}m(\xi,\eta)\widehat{f_{\mu}}(s,\xi-\eta)\widehat{f_{\nu}}(s,\eta)\,\mathrm{d}\eta\mathrm{d}s.\end{equation} Here one can notice that, at least in the approximation when the inputs $f_{\mu}$ and $f_{\nu}$ are Schwartz, the main contribution of the integral (\ref{duhamel}) comes from the vicinity of the stationary set\[\mathcal{R}:=\{(\xi,\eta):\Phi_{\sigma\mu\nu}(\xi,\eta)=\nabla_{\eta}\Phi_{\sigma\mu\nu}(\xi,\eta)=0\}.\] This set $\mathcal{R}$, which is called the \emph{spacetime resonance} set, was first introduced in \cite{GMS09} and plays a crucial role in the analysis.

Using this strategy, in \cite{IP12}, the authors were able to perform a robust stationary phase analysis near $\mathcal{R}$, and proved small data scattering of (\ref{nlkg}) under the additional assumptions that
\begin{equation}\label{assm1}(c_{\alpha}-c_{\beta})(c_{\alpha}^{2}b_{\alpha}-c_{\beta}^{2}b_{\beta})\geq 0,\qquad1\leq\alpha,\beta\leq d;\end{equation}
\begin{equation}\label{assm2}b_{\alpha}+b_{\beta}-b_{\gamma}\neq 0,\qquad1\leq\alpha,\beta,\gamma\leq d.\end{equation}
Below we will briefly discuss why these two assumptions are needed in \cite{IP12}, and how we are able to remove them in the current paper.
\subsubsection{Spherical symmetry and rotation vector fields}
It is clear that the choice of the $Z$ norm is the crucial component in the scheme described above; in particular, it should be strong enough such that (\ref{decaykg}) holds, and also weak enough such that the ``second iteration'', namely the output function $f_{\sigma}$ of (\ref{duhamel}) assuming the input functions $f_{\mu}$ and $f_{\nu}$ are Schwartz and independent of time, has bounded $Z$ norm.

Following \cite{IP12}, let \[\|f\|_{Z}=\sup_{j,k}\|f_{jk}\|_{Z_{jk}},\] where $f_{jk}$ is obtained by localizing $f$ at frequency scale $\sim 2^k$ and space scale $\sim 2^{j}$. In \cite{IP12} it turns out that, in order for (\ref{decay98}) to hold, the $Z$ norm has to incorporate strong space localization, and has to be (almost) as strong as \[\|f\|_{Z_{jk}}=2^{j}\|f\|_{L^{2}}.\] Moreover, we can see from a simple volume counting argument that, to guarantee that the second iteration belongs to this space, it is necessary that the phase $\Phi$ is nondegenerate, namely $\det(\nabla_{\eta}^{2}\Phi_{\sigma\mu\nu})\neq 0$ on the spacetime resonance set $\mathcal{R}$. This is why (\ref{assm1}) has to be assumed in \cite{IP12}; it is a convenient algebraic condition that implies this non-degeneracy. In the absence of (\ref{assm1}), the best control we can prove for the output function of (\ref{duhamel}) (i.e., the strongest $Z$ norm possible) is uniform bound in the norm $\|f\|_{Z_{jk}'}=2^{5j/6}\|f\|_{L^{2}}$, which is not strong enough to imply (\ref{decaykg}).

To overcome this obstacle, we will introduce the rotation vector fields, as included the vector field set $\Gamma$ in (\ref{normkg}). The idea of using such vector fields goes back to Klainerman \cite{Kl85}, \cite{Kl86} (see also \cite{Kl87}); in the current situation, we have \begin{equation}\label{impr}\|e^{-it\Lambda}f_{jk}\|_{L^{\infty}}\lesssim (1+t)^{-1-\delta}2^{(1/2+\delta)j}(\|f_{jk}\|_{L^{2}}+\|\Omega f_{jk}\|_{L^{2}}),\end{equation} at least when $j\leq (1-\delta^2)m$ and $|k|\lesssim 1$, where $\Omega$ is the rotational part of $\Gamma$. Note that we gain a power $2^{(1/2-\delta)j}$ at the price of using one $\Omega$, which in particular allows us to close the argument with the weak $2^{5j/6}$ power in the $Z$ norm. For the proof of the improved linear dispersion inequality (\ref{impr}), see Proposition \ref{disper0} below.
\subsubsection{A bilinear lemma}\label{bililemma} The above arguments are only for the second iteration. In general, the input functions $f_{\mu}$ and $f_{\nu}$ are not really Schwartz, which causes trouble when trying to localize $\eta$ to the vicinity of the ``space resonant set'' where $\nabla_{\eta}\Phi=0$, in the study of the integral (\ref{duhamel}). In \cite{IP12}, this was solved by using the (strong) $Z$ norm and an orthogonality trick; these arguments will not be sufficient here, due to our $Z$ norm being strictly weaker.

This issue is solved in the current paper by exploiting further the rotation vector fields $\Omega$; here note that at least for medium frequencies (i.e. frequencies $\sim 1$, which is where spacetime resonance takes place), we have\[\sup_{\theta\in\mathbb{S}^{2}}\|f(\rho\theta)\|_{L_{\rho}^{2}}\lesssim\sup_{|\alpha|\leq 2}\|\Omega_{0}^{\alpha}f\|_{L^{2}},\] thus we can gain a factor of $\delta^{2}$, provided that we can somehow restrict the direction vector of $\eta$ in (\ref{duhamel}) to a region of size $\delta$ in $\mathbb{S}^{2}$. This motivates the second main technical tool in this paper, namely the bilinear integration lemma, Lemma \ref{angular} below, which holds under very mild conditions, and effectively allows one to restrict to $|\sin\angle (\xi,\eta)|\lesssim 2^{-(1/2-\epsilon)m}$ for medium frequencies. With this additional gain, we can then close the $Z$ norm estimate using Schur's bound; for details see Section \ref{med}.

We remark that the use of rotation vector fields in this paper, and in particular the gain in both linear (Proposition \ref{disper0}) and bilinear (Lemma \ref{angular}) estimates, is a quite general feature and has since been applied to different dispersive models, see \cite{DIP14}, \cite{DIPP16}; see also \cite{IP-ww} and \cite{IP-cp} for slightly different uses of vector fields.
\subsubsection{Low frequencies, and sharp integration by parts} The other assumption (\ref{assm2}) in \cite{IP12} was used to ensure that $(0,0)\not\in\mathcal{R}$. This greatly simplifies the analysis by allowing to integrate by parts in $s$ (and obtain enough gain) for all low frequency inputs.

Without (\ref{assm2}), one has to study low frequency inputs on a case by case basis; moreover, the point $(0,0)$, which can now be resonant, can actually be very degenerate, meaning it is possible to have$(\nabla_{\eta}^{\alpha}\Phi)(0,0)=0$ for $|\alpha|\leq 3$ (which is clearly a disadvantage in terms of stationary phase analysis). One example is when\[\Phi(\xi,\eta)=\sqrt{|\xi|^{2}+1}-\sqrt{2|\xi-\eta|^{2}+4}+\sqrt{|\eta|^{2}+1}=\xi\cdot\eta/2+O(|\xi|^{4}+|\eta|^{4}).\] To see the effect of this degeneracy, we consider a ``rescaled Schwartz'' component $f_{j,-j}$ localized in frequency at scale $\sim 2^{-j}$ and in space at scale $\sim2^j$. Suppose that $|s|\sim 2^{m}$ in (\ref{duhamel}), and that the inputs are \[\widehat{f_{\mu}}(s,\xi-\eta)=2^{cj}\chi(2^{j}(\xi-\eta)),\qquad \widehat{f_{\nu}}(s,\eta)=2^{cj}\chi(2^{j}\eta)\] with $j=m/4$ and some constant $c$, then in the region where $|\eta|\sim|\xi-\eta|\sim 2^{-j}$ and $|\xi|\sim 2^{-3j}$, we have $|\Phi|\lesssim 2^{-m}$, so the oscillation factor $e^{is\Phi}$ is irrelevant, thus the output will be a rescaled Schwartz function supported at frequency scale $|\xi|\sim 2^{-3j}$ with $L_{\xi}^{\infty}$ norm bounded by\[2^{m}2^{-3j}2^{2cj}=2^{(3j)(2c+1)/3}.\] If we start with $c=0$ (imagine cutting off a Schwartz function at scale $|\xi|\sim 2^{-j}$), then in finitely many iterations we obtain a profile with form $2^{cj}\chi(2^{j}\xi)$, where $c$ can be arbitrarily close to $1$ (which is the unique fixed point of the map $c\mapsto (2c+1)/3$). Therefore, if we were to choose $\|f\|_{Z_{jk}}=2^{j}2^{\lambda k}\|f\|_{L^{2}}$ as in \cite{IP12}, then we \emph{must} have $\lambda\geq 1/2$. On the other hand, it can be proved that when $\lambda\geq 1/2$, the best decay rate for $\|e^{-it\Lambda_{\nu}}f\|_{L^{\infty}}$ is precisely $t^{-1}$, which is not integrable. Note that this cannot be helped by the use of rotation vector fields $\Omega$, since it does nothing to rescaled Schwartz functions.

In this paper, we circumvent this difficulty by adding a log factor and choosing  \[\|f\|_{Z_{jk}}=2^{j}2^{k/2}\langle j\rangle^{N_{0}}\|f\|_{L^{2}}.\] Note that this makes the decay rate integrable, and does \emph{not} violate the above intuitions. The drawback is that, when integrating by parts with low frequencies, we have to be precise ``up to log factors''. In order to achieve this, we use a sharp integration by parts lemma, Proposition \ref{ips0}, which requires that we integrate by parts $A$ times with $A$ \emph{depending on} the functions themselves. This lemma is proved in Section \ref{norms}, and the analysis of low frequencies is carried out in Section \ref{lowlowtolow}.
\subsubsection{Remarks on the two dimensional case} The study of (\ref{nlkg}) in two dimensions will be much harder, and one should not expect a general result as the one in this paper. In fact, with carefully chosen speeds and masses, one can construct a Klein-Gordon system that is expected to blowup in finite time even for small data, on a heuristic level (though actually constructing blowup solutions would also be hard). On the other hand, for \emph{generic} choices of speeds and masses, one has correct non-degeneracy and separation conditions (see \cite{DIP14}; see also \cite{DIPP16}, \cite{G11} and \cite{GM11} for other models), and can still expect global well-posedness and scattering. See \cite{DIP14} for a system that exhibits the typical behavior for generic Klein-Gordon systems in two dimensions.
\subsubsection{Plan of this paper}In Section \ref{norms} we define the relevant notations and in particular the $Z$ norm, and prove the crucial linear dispersion bound. In Section \ref{energy} we prove the energy bound, and reduce Theorem \ref{mainkg} to the main $Z$ norm estimate, namely Proposition \ref{main01}. In Section \ref{beginning} we make several reductions and get rid of some easy instances of Proposition \ref{main01}; we then discuss the low frequency case in Section \ref{lowlowtolow}, and the medium frequency case, which requires more care, in Section \ref{med}. The high frequency case is much easier and is dealt with in Section \ref{high}. Finally, in Section \ref{phaseprop} we collect some auxiliary facts about the phase function and spherical harmonics decomposition that will be used throughout this paper.
\section{Norms and basic estimates}\label{norms}
\subsection{The definition of $Z$ norm}
Recall the notations defined in Section \ref{notatkg}, and also the vector field set $\Gamma$ as in (\ref{normkg}). We will define the $Z$ norm as follows.\begin{definition}\label{normdef}Fix $(j,k)\in\mathcal{J}$, define the norm\begin{equation}\label{part}\|f\|_{Z_{jk}}=
\begin{dcases}\sup_{|\mu|\leq N/2+2}\langle j\rangle^{N_{0}}2^{\min(k/2,0)}2^{j}\|\Gamma^{\mu}f\|_{L^{2}},&|k|\geq K_{0};\\
\sup_{|\mu|\leq N/2+2}\big(2^{5j/6}\langle j\rangle^{-N_{0}}\|\Gamma^{\mu}f\|_{L^{2}}+\langle j\rangle^{N_{0}}2^{j}\|\widehat{\Gamma^{\mu}f}\|_{L^{1}}\big),&|k|<K_{0}.\end{dcases}\end{equation}Also define the full $Z$ norm by\begin{equation}\label{fullz}\|f\|_{Z}=\sup_{(j,k)\in\mathcal{J}}\|f_{jk}\|_{Z_{jk}},\end{equation} and the $X$ norm by\[\|f\|_{X}=\sup_{|\mu|\leq N}\|\Gamma^{\mu}f\|_{L^{2}}+\|f\|_{Z}.\]\end{definition} Notice that by (\ref{part}) we have \begin{equation}\label{normcomp}\|f\|_{Z_{jk}}\lesssim \|f\|_{Z_{jk'}},\qquad |k|<K_{0}\leq |k'|\leq O_{K_{0}}(1),\end{equation} if $\widehat{f}$ is supported in $|\xi|\leq O_{K_0}(1)$; this simple observation will be convenient later.

\subsection{Linear and bilinear estimates}
\begin{proposition}[Sharp integration by parts]\label{ips0}Suppose $K,\lambda\geq 1$ and $\epsilon_{j}$ are positive parameters, and $h(\eta)$ is some compact supported function on $\mathbb{R}^{3}$, verifying
\begin{equation}\label{ibp1}\|\partial_{\eta}^{\mu}h(\eta)\|_{L^{1}}\lesssim (CN)!\lambda^{N}\end{equation} for some $C\geq 1$, and $N$ and all $|\mu|\leq N$. Moreover, let $\Phi=\Phi(\eta)\in\mathcal{G}_{C}$ be a function such that
\begin{equation}\label{ibp2}|\partial_{\eta}\Phi(\eta)|\geq\epsilon_{1};\qquad|\partial_{\eta}^{\mu}\Phi(\eta)|\leq \epsilon_{|\mu|}\end{equation} holds for each $2\leq |\mu|\leq n$ and at each point $\eta$ where $h$ or one of its derivatives is nonzero, then we have the estimate
\begin{equation}\label{ipsp}\bigg|\int_{\mathbb{R}^{3}}e^{{i}K\Phi(\eta)}h(\eta)\,\mathrm{d}\eta\bigg|\lesssim e^{-\gamma M^{\gamma}},\end{equation}where\begin{equation}M=\min(K\epsilon_{1}^{2}/\epsilon_{2},K\epsilon_{1}\epsilon_{2}/\epsilon_{3},\cdots,K\epsilon_{1}\epsilon_{n-1}/\epsilon_{n},K\epsilon_{1}\epsilon_{n},K\epsilon_{1}/\lambda),\end{equation}and $\gamma$ is small enough depending on $C$. In particular, if we can choose $\epsilon_{j}=\epsilon^{\frac{n-j+1}{l}}$, then we have\[M=\min(K\epsilon^{\frac{n+1}{n}},K\epsilon/\lambda).\]

If we fix a direction $\theta\in\mathbb{S}^{2}$ and replace the $\partial_{\eta}$ in (\ref{ibp1}) and (\ref{ibp2}) by the directional derivative $\theta\cdot\partial_{\eta}$, then the same result will remain true (uniformly in $\theta$).
\end{proposition}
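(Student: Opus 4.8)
The plan is to iterate the standard non-stationary phase identity
\[
e^{iK\Phi} = \frac{1}{iK}\,\frac{\partial_\eta\Phi}{|\partial_\eta\Phi|^2}\cdot\partial_\eta\, e^{iK\Phi},
\]
integrating by parts $N$ times for a value of $N$ to be optimized \emph{at the end}, rather than fixed in advance; this is the ``sharp'' feature of the lemma. Define the first-order operator $Lg := \frac{1}{iK}\,\mathrm{div}_\eta\!\big(g\,\partial_\eta\Phi/|\partial_\eta\Phi|^2\big)$, so that $\int e^{iK\Phi}h = \int e^{iK\Phi}(L^N h)$ for every $N$. The core of the argument is to show that each application of $L$ costs a factor controlled by $1/M$ (up to combinatorial constants), where $M$ is the quantity in the statement, so that $\|L^N h\|_{L^1} \lesssim (C'N)!\, M^{-N}$ for a constant $C'$ depending only on $C$; then one chooses $N \sim M^{\gamma}$ for small $\gamma$ (depending on $C$) and uses the elementary bound $(C'N)!\,M^{-N} \lesssim e^{-\gamma M^\gamma}$ to conclude \eqref{ipsp}.

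The key step is therefore the derivative estimate for $L^N h$. First I would record that the vector field $V := \partial_\eta\Phi/|\partial_\eta\Phi|^2$ has $\eta$-derivatives controlled using \eqref{ibp2}: by the Faà di Bruno / Gevrey calculus (since $\Phi\in\mathcal{G}_C$ and $|\partial_\eta\Phi|\geq\epsilon_1$, $|\partial_\eta^\mu\Phi|\leq\epsilon_{|\mu|}$), one gets
\[
|\partial_\eta^\mu V(\eta)| \lesssim (C''|\mu|)!\ \epsilon_1^{-1}\,\max_{2\le p\le |\mu|+1}\Big(\frac{\epsilon_p}{\epsilon_1\epsilon_{p-1}}\Big)^{\!|\mu|}\quad\text{(schematically)},
\]
the point being that every additional derivative of $V$ produces at most a factor of the form $\epsilon_p/(\epsilon_1\epsilon_{p-1})$, i.e.\ $1/M$ after multiplying by the $1/K$ from $L$, plus the factor $1/(K\epsilon_1\epsilon_n)$ or $1/(K\epsilon_1/\lambda)$ coming from the top-order term and from differentiating $h$ via \eqref{ibp1}. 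Then one applies the Leibniz rule to $L^N h = \frac{1}{(iK)^N}$ times a sum of terms, each a product of derivatives of $V$ and a derivative of $h$ of total order $\le N$, integrates in $\eta$, and uses \eqref{ibp1} together with the $V$-bounds to collect everything into $(C'N)!\,M^{-N}$. The bookkeeping of the combinatorial constants — making sure the factorials stay of the form $(C'N)!$ and do not blow up to $(N!)^2$ — is the main obstacle, and is exactly why the Gevrey hypothesis $\Phi\in\mathcal{G}_C$ (and the Gevrey choice of $\varphi$ made in Section \ref{notatkg}) is imposed; without it the iteration would only give polynomial, not subexponential, decay.

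For the special case $\epsilon_j = \epsilon^{(n-j+1)/l}$ one simply substitutes into the formula for $M$: the ratios $\epsilon_1\epsilon_{j-1}/\epsilon_j$ all equal $\epsilon^{(n+1)/l}$-type quantities, the worst of which, over $2\le j\le n$ together with the endpoint terms $K\epsilon_1\epsilon_n$ and $K\epsilon_1/\lambda$, reduces to $M=\min(K\epsilon^{(n+1)/n},K\epsilon/\lambda)$ after taking $l=n$ (which is the natural normalization making $\epsilon_1=\epsilon$); I would just remark this rather than belabor it. Finally, the directional version is obtained with no change: replacing $\partial_\eta$ by $\theta\cdot\partial_\eta$ throughout, one integrates by parts using $e^{iK\Phi} = \frac{1}{iK(\theta\cdot\partial_\eta\Phi)}\,\theta\cdot\partial_\eta e^{iK\Phi}$ in the single variable $t\mapsto \eta + t\theta$ (after reducing to a one-dimensional integral via Fubini, or simply running the same multi-dimensional iteration with $L$ replaced by its directional analogue), and all estimates hold uniformly in $\theta\in\mathbb{S}^2$ since the hypotheses are assumed uniformly in $\theta$.
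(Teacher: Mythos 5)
Your strategy is the same as the paper's at its core: integrate by parts $N$ times with $N$ left free, show the iterate is bounded by $(C'N)!$ times a negative power of $M$, and then choose $N\sim M^{\gamma}$ to convert this into the sub-exponential bound \eqref{ipsp}. Where you genuinely diverge is in the treatment of the multidimensional case. The paper first proves the \emph{directional} statement via the explicit one-variable formula \eqref{induct2} for $(D')^{N}h$ with $Du=\partial_{1}u/\partial_{1}\Phi$, and then reduces the full-gradient case to it by a pigeonhole partition with Gevrey cutoffs $\varphi_{0}(2\epsilon_{1}^{-1}\partial_{j}\Phi)$; a nontrivial part of the paper's proof is checking that these cutoffs themselves satisfy bounds of type \eqref{ibp1} with $\lambda$ replaced by $\max(\lambda,\epsilon_{j+1}/\epsilon_{j})$. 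You instead work directly with $L g=\frac{1}{iK}\mathrm{div}(gV)$, $V=\nabla\Phi/|\nabla\Phi|^{2}$, which avoids the cutoff partition entirely but shifts the burden onto a multivariate Fa\`a di Bruno computation; this is legitimate, with one caution: since the hypotheses give no smallness for $|\nabla\Phi|$ itself (only the Gevrey $O(1)$ bound from above and $\epsilon_{1}$ from below), you must pair each undifferentiated factor $\partial_{i}\Phi$ in the numerator against a power of $|\nabla\Phi|^{-2}$ in the denominator (e.g.\ bound $\partial_{i}\Phi(\nabla\Phi\cdot\nabla\partial_{j}\Phi)/|\nabla\Phi|^{4}$ by $\epsilon_{2}|\nabla\Phi|^{-2}\le\epsilon_{2}\epsilon_{1}^{-2}$), otherwise the naive estimate degrades by powers of $\epsilon_{1}^{-1}$. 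Also, your claimed per-step cost of $M^{-1}$ (hence $(C'N)!M^{-N}$ total) is in fact attainable once one notes that every elementary cost — $\lambda/(K\epsilon_{1})$, $\epsilon_{p+1}/(K\epsilon_{1}\epsilon_{p})$ for $p<n$, $1/(K\epsilon_{1}\epsilon_{n})$ at top order, and $1/(K\epsilon_{1})$ beyond order $n$ — is $\lesssim M^{-1}$; the paper's own bookkeeping only yields the weaker aggregate $\min_{j}M_{j}^{-nN/(n+1)}$, but either bound gives \eqref{ipsp} after adjusting $\gamma$. Note also that the paper's ordering (directional case first) is not an accident: the directional variant is part of the statement and is used elsewhere, so if you run the full-gradient argument you still owe the one-dimensional/Fubini version separately, which you do indicate at the end.
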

\begin{proof} First consider the case when we have a directional derivative, and assume that it is $\partial_{1}$. We will integrate by parts in $\eta_{1}$ a total of $N$ times, where $N$ is a large integer to be determined. Let the differential operator $D$ be defined by\[Du=\frac{\partial_{1}u}{\partial_{1}\Phi};\,\,\,\,\,\,\,D(e^{{i}K\Phi})={i}Ke^{{i}K\Phi},\] then its dual $D'$ would be \[D'u=-\partial_{1}\bigg(\frac{u}{\partial_{1}\Phi}\bigg).\] Therefore, integrating by parts, we obtain
\begin{equation}\bigg|\int_{\mathbb{R}^{3}}e^{{i}K\Phi(\eta)}h(\eta)\,\mathrm{d}\eta\bigg|\leq K^{-N}\big\|(D')^{N}h\big\|_{L^{1}}.\end{equation} In order to bound $(D')^{N}h$, we will use the following explicit formula, namely
\begin{equation}\label{induct2}(D')^{N}h=\sum_{r=0}^{N}\sum_{\alpha_{0}+\cdots+\alpha_{r}=N-r}\Omega(n,r;\alpha_{0},\cdots,\alpha_{r})\cdot\partial_{1}^{\alpha_{0}}h\cdot\frac{\partial_{1}^{\alpha_{1}+2}\Phi\cdots\partial_{1}^{\alpha_{r}+2}\Phi}{(\partial_{1}\Phi)^{r+N}},\end{equation} where the coefficient \[|\Omega|\leq 3^{N}(N+1)!.\] The formula (\ref{induct2}) is easily proved by induction.

Now, using (\ref{ibp2}), (\ref{ibp1}) and (\ref{induct2}), we have\begin{equation}\label{boundp}K^{-N}\big\|(D')^{N}h\big\|_{L^{1}}\lesssim ((3C+3)N)!\cdot K^{-N}\lambda^{\alpha_{0}}\cdot\epsilon_{1}^{-N-r}\prod_{j=0}^{n-2}\epsilon_{j+2}^{r_{j}}.\end{equation} Here we denote by $r_{j}$ the number of $q\geq 1$ such that $\alpha_{q}=j$. Let\begin{equation}\label{prepare}\rho:=r-\sum_{j=0}^{n-2}r_{j}\geq 0;\,\,\,\,\,\,\,\alpha_{0}+\sum_{j=0}^{n-2}jr_{j}\leq N-r-(n-1)\rho.\end{equation}

We now denote $K\epsilon_{1}/\lambda=M_{0}$ and $K\epsilon_{1}\epsilon_{j}/\epsilon_{j+1}=M_{j}$, where $\epsilon_{n+1}=1$. We could then solve that\[\epsilon_{j}=(M_{1}\cdots M_{j-1}K)^{\frac{j-n-1}{n+1}}(M_{j}\cdots M_{n})^{\frac{j}{n+1}};\]\[\lambda=(M_{1}\cdots M_{n}K)^{\frac{1}{n+1}}M_{0}^{-1}.\]Plugging into (\ref{boundp}) we obtain\begin{eqnarray}K^{-N}\big\|(D')^{N}h\big\|_{L^{1}}&\lesssim&((3C+3)N)!K^{-N}(M_{1}\cdots M_{n}K)^{\frac{\alpha_{0}}{n+1}} M_{0}^{-\alpha_{0}}\times(M_{1}\cdots M_{n})^{\frac{-N-r}{n+1}}\times\nonumber\\
&\times&K^{\frac{n(N+r)}{n+1}}\prod_{j=0}^{n-2}(M_{1}\cdots M_{j+1}K)^{\frac{r_{j}(j-n+1)}{n+1}}(M_{j+2}\cdots M_{n})^{\frac{r_{j}(j+2)}{n+1}}\nonumber\\
&=&((3C+3)N)!K^{\sigma}\prod_{j=0}^{n}M_{j}^{\tau_{j}},\end{eqnarray} where \[\sigma=-N+\frac{\alpha_{0}+n(N+r)}{n+1}+\sum_{j=0}^{n-2}\frac{r_{j}(j-n+1)}{n+1}\leq 0,\] and $\tau_{0}=-\alpha_{0}\leq 0$, and\[\tau_{j}=\frac{\alpha_{0}-N-r}{n+1}-\sum_{i=j-1}^{n-2}r_{i}+\frac{1}{n+1}\sum_{i=0}^{n-2}r_{i}(i+2)\leq -\rho\leq 0\]for $1\leq j\leq n$, and \begin{eqnarray}\sum_{j=0}^{n}\tau_{j}&=&\frac{-\alpha_{0}-n(N+r)}{n+1}+\frac{n}{n+1}\sum_{i=0}^{n-2}r_{i}(i+2)-\sum_{i=0}^{n-2}r_{i}(i+1)\nonumber\\
&=&-\frac{nN}{n+1}-\frac{1}{n+1}\big(\alpha_{0}+\sum_{i=0}^{n-2}r_{i}(i+1)+n\rho\big)\leq-\frac{n}{n+1}N\nonumber,\end{eqnarray} all the inequalities being consequences of (\ref{prepare}). This then implies that\[K^{-N}\big\|(D')^{N}h\big\|_{L^{1}} \lesssim ((3C+3)N)!\min(M_{0},\cdots,M_{l})^{-\frac{nN}{n+1}}.\]If we now choose $N$ appropriately, an easy computation will show that
\begin{equation}\bigg|\int_{\mathbb{R}^{3}}e^{{i}K\Phi(\eta)}h(\eta)\,\mathrm{d}\eta\bigg|\lesssim e^{-\gamma M^{\gamma}}\end{equation} for $\gamma$ small enough.

Now suppose the directional derivative is replaced by the full gradient. By (\ref{ibp2}), we have that\[h(\eta)=h(\eta)\cdot\bigg(1-\prod_{j=1}^{3}\varphi_{0}(2\epsilon_{1}^{-1}\partial_{j}\Phi(\eta))\bigg),\] so we only need to bound the integral with $h$ replaced by $h_{1}=h\cdot\varphi_{0}(2\epsilon_{1}^{-1}\partial_{1}\Phi)\varphi_{0}(2\epsilon_{1}^{-1}\partial_{2}\Phi)$ (the other similar terms are bounded in the same way).

Since (\ref{ibp2}) now holds with $\partial_{\eta}$ replaced by $\partial_{1}$ at each point where $h_{1}$ or one of its derivative is nonzero (possibly with different constants), we only need to show that $h_{1}$ also verifies the bound (\ref{ibp1}), but with $\lambda$ replaced by the maximum of $\lambda$ and each $\epsilon_{j+1}/\epsilon_{j}$ (again, let $\epsilon_{n+1}=1$). Using Leibniz rule, we can further reduce to proving the same result for $\varphi_{0}(2\epsilon_{1}^{-1}\partial_{2}\Phi)$ but with $\lambda$ replaced by the maximum of $\epsilon_{j+1}/\epsilon_{j}$, the $L^{1}$ norm replaced by the $L^{\infty}$ norm, and restrict to the subset where $h$ or one of its derivatives is nonzero (note that $\varphi_{0}(2\epsilon_{1}^{-1}\partial_{1}\Phi)$ is estimated in the same way).

Now, using (\ref{induct}) we have
\begin{equation}\partial_{\eta}^{\mu}(\varphi_{0}(2\epsilon_{1}^{-1}\partial_{2}\Phi))=\sum_{r=0}^{N}(2\epsilon_{1}^{-1})^{r}\sum_{\mathcal{T}}\Omega(\mathcal{T})\cdot(\partial_{j_{1}}\cdots\partial_{j_{r}}\varphi_{0})(2\epsilon_{1}^{-1}\partial_{2}\Phi)\cdot\prod_{q=1}^{r}\partial_{x}^{\mu_{q}}\partial_{j_{q}}\partial_{2}\Phi,\end{equation} here the summation is taken over all tuples\[\mathcal{T}=(\mu_{1},\cdots,\mu_{d},j_{1},\cdots,j_{d}),\] and we have \[\sum_{i=1}^{r}|\mu_{i}|=N-r,\qquad|\Omega(\mathcal{T})|\leq (N+1)!.\]Let, for each $0\leq j\leq n-2$, the number of $q$'s such that $|\mu_{q}|=j$ be $r_{j}$, then we will have \[\rho=r-\sum_{j=0}^{n-2}r_{j}\geq0,\qquad\sum_{j=0}^{n-2}jr_{j}\leq N-r-(n-1)\rho.\] Therefore, since we are in the set where the second part of (\ref{ibp2}) holds, we can bound\begin{eqnarray}\big\|\partial_{\eta}^{\mu}(\varphi_{0}(2\epsilon_{1}^{-1}\partial_{2}\Phi))\big\|_{L^{\infty}}&\lesssim& ((3C+8)N)!\cdot\epsilon_{1}^{-r}\prod_{j=0}^{n-2}\epsilon_{j+2}^{r_{j}}\lesssim ((3C+8)N)!\epsilon_{1}^{-r}\prod_{j=0}^{n-2}(\epsilon_{1}(\lambda')^{j+1})^{r_{j}}\nonumber\\
&\lesssim & ((3C+8)N)!\epsilon_{1}^{-\rho}(\lambda')^{N-n\rho}\nonumber\\
&\lesssim& ((3C+8)N)!(\lambda')^{N}\nonumber,\end{eqnarray} where $\lambda'$ is the maximum of $\epsilon_{j+1}/\epsilon_{j}$, since we have $\epsilon_{1}^{-1}\lesssim (\lambda')^{n}$. This completes the proof.
\end{proof}
\begin{remark}\label{remark0} When $n=1$, we will also use a slightly more general version that allow\[|\partial_{\eta}^{\mu}\Phi(\eta)|\lesssim (CN)!(\lambda')^{N}\] for some $\lambda'\geq 1$. In this case we simply rescale to reduce to the case proved above, and obtain that (\ref{ipsp}) holds with $M=\min(K(\lambda')^{-2}\epsilon^{2},K\epsilon/\lambda)$.
\end{remark}
\begin{proposition}[Improved dispersion decay]\label{disper0}
Suppose $\|f\|_{X}\lesssim 1$, and that $\langle t\rangle\sim 2^{m}$. Also let $\Lambda=\Lambda_{\nu}$ with $\nu\in\mathcal{P}$ and fix $(j,k)\in\mathcal{J}$. 
\begin{enumerate}
\item If $k\leq -K_{0}$, we have\begin{equation}\label{disper1}\sup_{|\mu|\leq N/2+2}\big\|e^{{i}t\Lambda}\Gamma^{\mu}S_{l}f_{jk}^{*}\big\|_{L^{\infty}}\lesssim\langle j\rangle^{-N_{0}}\min(2^{-j+k},2^{-(3m-j+k)/2}).\end{equation}

\item If $|k|<K_{0}$ and $ |j-m|\leq A\log m$, we have
\begin{equation}\label{disper4}\sup_{|\mu|\leq N/2+2}\big\|e^{{i}t\Lambda}\Gamma^{\mu}S_{l}f_{jk}^{*}\big\|_{L^{\infty}}\lesssim 2^{-m}\langle m\rangle^{-N_{0}+A}.\end{equation}

\item If $|k|<K_{0}$, and $|j-m|\geq A\log m$, we have
\begin{equation}\label{disper3}\sup_{|\mu|\leq N/2+2}\big\|e^{{i}t\Lambda}\Gamma^{\mu}S_{l}f_{jk}^{*}\big\|_{L^{\infty}}\lesssim 2^{-3m/2-j/3}\min(2^{l}\langle m\rangle^{N_0+A},2^{4\delta(m+j)}).\end{equation} 

\item If $k\geq K_{0}$ and $|j-m|\leq A\log m$, we have \begin{equation}\label{disper2}\sup_{|\mu|\leq N/2+2}\big\|e^{{i}t\Lambda}\Gamma^{\mu}S_{l}f_{jk}^{*}\big\|_{L^{\infty}}\lesssim\langle m\rangle^{-N_{0}+A}2^{-m+3k/2}.\end{equation}

\item If $k\geq K_{0}$ and $|j-m|\geq A\log m$, we have \begin{equation}\label{disper5}\sup_{|\mu|\leq N/2+2}\big\|e^{{i}t\Lambda}\Gamma^{\mu}f_{jk}^{*}\big\|_{L^{\infty}}\lesssim 2^{-(3m+j)/2}\min(2^{4k+l}\langle m\rangle^{-N_{0}+A},2^{2\delta(3m+j)}).\end{equation}

\end{enumerate}
\end{proposition}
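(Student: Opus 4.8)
The plan is to prove each of the five decay bounds by the same basic scheme: write $e^{it\Lambda}\Gamma^{\mu}S_{l}f_{jk}^{*}(x)$ as an oscillatory integral $\int e^{i(x\cdot\xi+t\Lambda(\xi))}\widehat{g}(\xi)\,d\xi$ where $g=\Gamma^{\mu}S_{l}f_{jk}^{*}$, and analyze the phase $\psi(\xi)=x\cdot\xi+t\Lambda(\xi)$ by stationary phase, splitting into the regime where $\psi$ has a stationary point and the regime where it does not. The stationary point equation is $x=-t\Lambda'(\xi)$, i.e. $x = -t c_{\nu}^2\xi/\Lambda_{\nu}(\xi)$, so $|x|\sim |t||\xi|/\langle\xi\rangle$; since $g$ is frequency localized at $2^k$ and space localized at $2^j$ (through $f_{jk}^{*}$), this forces $2^j\sim 2^{m}\cdot 2^{\min(k,0)}\cdot\ldots$, which explains the dichotomy $|j-m|\lesssim A\log m$ versus $|j-m|\gtrsim A\log m$ in the statement.

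First I would handle the non-stationary regime $|j-m|\geq A\log m$: here $|\nabla_{\xi}\psi|\gtrsim 2^{\max(j,m)}$ relative to the Hessian bounds, and one integrates by parts many times. For $|k|<K_0$ this is exactly the setting of Proposition \ref{ips0} (the sharp integration-by-parts lemma), applied with $K\sim 2^m$, $\lambda\sim 2^j$, and the $\epsilon_i$ coming from the derivative bounds on $\Lambda_{\nu}$ restricted to $|\xi|\sim 2^k$; the Gevrey regularity of $\varphi$ (hence of the cutoffs entering $f_{jk}^{*}$ and $S_l$) is what makes the hypotheses of Proposition \ref{ips0} available, and the output $e^{-\gamma M^{\gamma}}$ is superpolynomially small, which is more than enough to give the stated powers of $2^m$. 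The two factors in the $\min$ in \eqref{disper3} and \eqref{disper5} come from two different ways to estimate the leftover: one using the full $X$-norm bound on $g$ together with the $S_l$ localization (which costs $2^l$ from summing spherical harmonics up to degree $2^l$, or $2^{4k}$ from the frequency envelope), the other using only crude Bernstein-type $L^2\to L^\infty$ and volume bounds, which gives the $2^{4\delta(m+j)}$ or $2^{2\delta(3m+j)}$ loss.

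For the stationary regime $|j-m|\leq A\log m$ (parts (2) and (4)), I would use the standard stationary-phase asymptotic: the contribution is $\sim |t|^{-3/2}|\det\Lambda''(\xi_0)|^{-1/2}|\widehat{g}(\xi_0)|$ plus lower order, localized near the stationary point $\xi_0$. Since $|k|<K_0$ (resp. $k\geq K_0$) we have $|\det\Lambda''(\xi)|\sim 1$ (resp. $\sim 2^{-3k}$ or $2^{3k}$ depending on which second derivative dominates — one tangential, two behaving like $2^{\pm k}$), and $\|\widehat{g}\|_{L^\infty}$ is controlled by $\|f\|_{Z_{jk}}\lesssim 1$ after accounting for the $\langle j\rangle^{N_0}2^j$ weight in the $Z_{jk}$ norm: solving $\|\widehat{g}\|_{L^1}\lesssim\langle j\rangle^{-N_0}2^{-j}$ and interpolating with the $L^2$ bound, or directly using the $L^1$ piece of the $Z$ norm, yields $\|\widehat{g}\|_{L^\infty}\lesssim\langle j\rangle^{-N_0}2^{-j}\cdot(\text{vol})^{?}$; combined with $2^j\sim 2^m$ and $|t|^{-3/2}\cdot 2^{?}$ this produces exactly $2^{-m}\langle m\rangle^{-N_0+A}$ in \eqref{disper4} and $2^{-m+3k/2}\langle m\rangle^{-N_0+A}$ in \eqref{disper2}. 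Part (1), $k\leq -K_0$, is the easiest: there $\Lambda_\nu(\xi)\approx b_\nu$ is essentially constant and there is no genuine stationary phase; one just uses $\|e^{it\Lambda}g\|_{L^\infty}\lesssim\|\widehat g\|_{L^1}\lesssim 2^{-j}\langle j\rangle^{-N_0}$ directly (giving the first term in the $\min$), and for the second term integrates by parts once or twice in $\xi$ using $|\nabla\psi|\gtrsim 2^m 2^k$ on the relevant region together with the $j$-localization, which is again a direct application of Remark \ref{remark0} / Proposition \ref{ips0} with $n=1$.

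The main obstacle I expect is the bookkeeping of the low-frequency case $|k|<K_0$: because the $Z_{jk}$ norm there only gives $2^{5j/6}$ in the $L^2$ piece (plus the $2^j$ weight on $\|\widehat{\Gamma^\mu f}\|_{L^1}$) and because near $\xi=0$ the phase $\Lambda_\nu$ can be very degenerate (as highlighted in Remark \ref{remark9} and the "rescaled Schwartz" discussion in the introduction), one has to be careful that the derivative bounds $\epsilon_i$ fed into Proposition \ref{ips0} are sharp "up to log factors" — using a crude bound would lose a power of $2^m$ that the statement does not allow. Concretely, one must choose the number $N$ of integrations by parts adaptively (depending on $m$ through $M$), exactly as Proposition \ref{ips0} permits, and track that the combinatorial factor $((3C+3)N)!$ is absorbed by the smallness of $e^{-\gamma M^\gamma}$. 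A secondary technical point is commuting $\Gamma^\mu$ (which includes the rotation fields $\Omega$ and translations $\partial_i$) past $e^{it\Lambda}$, $S_l$, $P_k$ and the spatial cutoff $\varphi_j^{(k)}$: $\Omega$ commutes with $e^{it\Lambda}$, $S_l$ and $P_k$ and roughly with $\varphi_j^{(k)}$ (it is tangential), while $\partial_i$ acting on $\varphi_j^{(k)}(x)P_kf$ costs $2^{-j}$ or $2^k$ — harmless — so the $\sup_{|\mu|\leq N/2+2}$ reduces, at the cost of enlarging the range of $\mu$ by a bounded amount, to the $\mu=0$ case with $f$ replaced by $\Gamma^{\mu'}f$, whose $X$-norm is still $\lesssim 1$.
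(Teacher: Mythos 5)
There is a genuine gap, and it sits exactly where the proposition earns its name. For cases (3) and (5) you assert that $|j-m|\geq A\log m$ puts you in a ``non-stationary regime'' where repeated integration by parts via Proposition \ref{ips0} yields a superpolynomially small output, ``more than enough'' for the stated bounds. This is not correct: the output point $x$ ranges over all of $\mathbb{R}^{3}$, and for $|x|\sim 2^{m}$ the phase $x\cdot\xi+t\Lambda(\xi)$ has genuine stationary points inside the annulus $|\xi|\sim 2^{k}$ no matter how $j$ compares to $m$ --- the hypothesis $|j-m|\geq A\log m$ constrains the spatial localization of the \emph{profile} $f_{jk}$, not of $e^{it\Lambda}f_{jk}$, which always spreads to $|x|\sim 2^{m}$. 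Accordingly the right-hand side of \eqref{disper3} is not superpolynomially small, and no amount of non-stationary-phase bookkeeping produces the specific rate $2^{-3m/2-j/3}2^{l}$. In the paper, integration by parts (Proposition \ref{ips0}) only disposes of $|x|\ll 2^{m}$ and $|x|\gg 2^{m}$; on the main region $|x|\sim 2^{m}$ one passes to polar coordinates, integrates by parts in the angular variables to restrict to a cap of radius $\epsilon'\sim 2^{-m/2}\langle m\rangle^{2A}$ around the stationary direction, splits $g=g_{1}+g_{2}$ according to radial Fourier support at scale $M_{0}\sim 2^{j}$, restricts the radial variable to the set $|\Xi(\rho)|\lesssim\epsilon''$ with $\epsilon''=2^{\max(j-m,-m/2)}\langle m\rangle^{2A}$, and then estimates by H\"older using the one-dimensional Sobolev bound $\|g_{1}\|_{L^{\infty}_{\rho}}\lesssim M_{0}^{1/2}\|g_{1}\|_{L^{2}_{\rho}}$ together with the spherical-harmonics bound \eqref{sobolev}, which is the sole source of the factor $2^{l}$; combining $(\epsilon')^{2}\min(\epsilon''M_{0}^{1/2},(\epsilon'')^{1/2})\,2^{l}\|F\|_{L^{2}}$ with the $Z$-norm bound $\|F\|_{L^{2}}\lesssim 2^{-5j/6}\langle j\rangle^{N_{0}}$ gives exactly $2^{-3m/2-j/3}2^{l}\langle m\rangle^{N_{0}+A}$, and the second entry of the $\min$ comes from interpolating against the trivial bound $2^{-(N/2-2)l}$ coming from the angular derivatives in the $X$-norm (not from ``crude Bernstein and volume bounds''). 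None of this structure --- the angular cap, the radial frequency splitting at $M_{0}$, the measure of the radial stationary set, the $2^{l}$ loss from \eqref{sobolev} --- appears in your outline, so the heart of the estimate is unproved; case (5) has the same gap plus the $2^{4k}$ accounting from $|\Lambda''(\rho)|\sim 2^{-3k}$.

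The easier parts are also organized differently, though these are minor by comparison. In the paper, (2) is just Hausdorff--Young against the Fourier-$L^{1}$ piece of the $Z_{jk}$ norm in \eqref{part}, and (1) and (4) combine Hausdorff--Young (or the standard dispersive bound \eqref{stkg} plus H\"older, $\|f_{jk}\|_{L^{1}}\lesssim 2^{3j/2}\|f_{jk}\|_{L^{2}}$) with the weighted $L^{2}$ information; no stationary-phase asymptotics with pointwise evaluation of $\widehat{g}$ at the critical point are used, and indeed such asymptotics are not directly available since $\widehat{g}$ oscillates at scale $2^{-j}$. Likewise the second term in the $\min$ of \eqref{disper1} comes from \eqref{stkg} and H\"older, not from one or two integrations by parts with $|\nabla\psi|\gtrsim 2^{m+k}$, which again would fail at the stationary set. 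Your remarks about commuting $\Gamma^{\mu}$ and about choosing the number of integrations by parts adaptively are sensible, but they do not substitute for the missing core argument.
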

\begin{proof} First we recall the standard dispersion estimate for the Klein-Gordon flow\begin{equation}\label{stkg}\|P_{k}e^{it\Lambda}f\|_{L^{\infty}}\lesssim 2^{-3m/2}(1+2^{3k})\|f\|_{L^{1}},\end{equation} see \cite{IP12}, Lemma 5.2.

Now (\ref{disper4}) is a direct consequence of Hausdorff-Young and the definition of the $Z$ norm; the bounds (\ref{disper1}) and (\ref{disper2}) are also easily deduced. In fact, when $(j,k)\in\mathcal{J}$ and $k\leq -K_{0}$, from Hausdorff-Young we have
\begin{equation}\big\|e^{{i}t\Lambda}\Gamma^{\mu}f_{jk}^{*}\big\|_{L^{\infty}}\lesssim\big\|\widehat{\Gamma^{\mu}f_{jk}^{*}}\big\|_{L^{1}}\lesssim\|\varphi_{[k-2,k+2]}(\xi)\|_{L^{2}}\|\widehat{\Gamma^{\mu}f_{jk}}\|_{L^{2}}\lesssim \langle j\rangle^{-N_{0}}2^{-j+k},\end{equation} while using (\ref{stkg}) and H\"{o}lder we have
\begin{equation}\big\|e^{{i}t\Lambda}\Gamma^{\mu}f_{jk}^{*}\big\|_{L^{\infty}}\lesssim2^{-3m/2}\|\Gamma^{\mu}f_{jk}\|_{L^{1}}\lesssim2^{-3(m-j)/2}\|\Gamma^{\mu}f_{jk}\|_{L^{2}}\lesssim \langle j\rangle^{-N_{0}}2^{-(3m-j+k)/2}.\end{equation} Similarly we have (\ref{disper2}).

Now we prove (\ref{disper3}) in the case $j\leq m-A\log m$. The case $j\geq m+A\log m$ only needs minor changes. Let $\Gamma^{\mu}f_{jk}=F$ and $\Gamma^{\mu}f_{jk}^{*}=F^{*}$; if $|x|\leq 2^{m}\langle m\rangle^{-A}$, recall that
\begin{equation}\label{express}(e^{{i}t\Lambda}F^{*})(x)=\int_{\mathbb{R}^{3}}e^{{i}t\Lambda(\xi)+{i}x\cdot \xi}\psi(\xi)\widehat{F}(\xi)\,\mathrm{d}\xi,\end{equation} where $\psi(\xi)=\varphi_{[k-2,k+2]}(\xi)$ is a cutoff because $|k|\leq K_{0}$. Since $F(z)$ is supported in $|z|\sim 2^{j}$ (unless $j=\max(-k,0)$, in which case $j=O(1)$ and the estimate will be trivial), we may rewrite the above expression as \begin{equation}\big(e^{{i}t\Lambda}F^{*}\big)(x)=\int_{\mathbb{R}^{3}}\psi_{0}(2^{-j}z)F(z)\,\mathrm{d}z\int_{\mathbb{R}^{3}}e^{{i}(t\Lambda(\xi)+(x-z)\cdot \xi)}\psi(\xi)\,\mathrm{d}\xi,\end{equation} with another cutoff $\psi_{0}(z)$ supported in the region $|z|\sim 1$. Now fix any $z$ such that $|z|\sim 2^{j}$, we can use Proposition \ref{ips0} with \[K=2^{\max(m,j)},\quad n=1,\quad \epsilon,\lambda\sim 1\] to bound the $\xi$-integral by $2^{-10m}$, which is clearly enough for (\ref{disper3}). The same argument also applies for $|x|\geq 2^{m}\langle m\rangle^{A}$.

Therefore, to prove (\ref{disper3}), we may assume $|x|\sim 2^{r}$ where $|r-m|\leq A\log m$; without loss of generality we may also assume $x=(\alpha,0,0)$, where $\alpha=|x|$. Decomposing $F$ into $S_{l}F$ as in (\ref{spherical}), we may also assume $l\leq 7\delta m$, since otherwise we have\[\|S_{l}F\|_{L^{2}}\lesssim 2^{-Nl/2}\sup_{|\nu|\leq N/2}\|\Omega^{\nu}S_{l}F\|_{L^{2}}\lesssim 2^{-7m/2},\] from which (\ref{disper2}) follows easily. Since the $\psi$ in (\ref{express}) is radial, we may write $\psi(|\xi|)\widehat{S_{l}F}(\xi)=g(|\xi|,\xi/|\xi|)$, then we have
\begin{equation}\label{polar}\big(e^{{i}t\Lambda}S_{l}F^{*}\big)(x)=\int_{\mathbb{R}}\int_{\mathbb{S}^{2}}e^{{i}(t\Lambda(\rho)+\alpha\rho\theta^{1})}\rho^{2}\psi_{1}(\rho)g(\rho,\theta)\,\mathrm{d}\rho\mathrm{d}\omega(\theta),\end{equation} where $\mathrm{d}\omega$ is the surface measure on $\mathbb{S}^{2}$ and $\psi_{1}$ is another cutoff.

Since $g$ is (qualitatively) a Schwartz function of $\rho$ and $\theta$, we may decompose $g=g_{1}+g_{2}$, where for each $\theta$, $\mathcal{F}_{\rho}g_{1}(\rho,\theta)(\tau)$ is supported in the region $|\tau|\lesssim M_{0}$, and $\mathcal{F}_{\rho}g_{2}(\rho,\theta)(\tau)$ is supported in the region $|\tau|\gg M_{0}$, where $M_{0}=2^{j}\langle m\rangle^{A/8}$. To estimate $g_{2}$, we only need to estimate \[\int_{|\tau|\gg M_{0}}\big|(\mathcal{F}_{\rho}g(\rho,\theta))(\tau)\big|\,\mathrm{d}\tau\] uniformly in $\theta$. But for each fixed $\tau$ with $|\tau|\gg M_{0}$ we have\begin{equation*}(\mathcal{F}_{\rho}g(\rho,\theta))(\tau)=\int_{\mathbb{R}}e^{-{i}\rho\tau}g(\rho,\theta)\,\mathrm{d}\rho=\int_{\mathbb{R}}e^{-{i}\rho\tau}\psi(\rho)\,\mathrm{d}\rho\int_{\mathbb{R}^{3}}\psi_{0}(2^{-j}z)S_{l}F(z)e^{-{i}\rho(\theta\cdot z)}\,\mathrm{d}z.\nonumber\end{equation*} Now if we fix $z$, then the integral in $\rho$ can be bounded by $(|\tau|+2^m)^{-10}$, which will be acceptable. Therefore in (\ref{polar}) we may replace the function $g$ by $g_{1}$. Using (\ref{sobolev}) and also noticing the bounded Fourier support of $g_{1}(\cdot,\theta)$ for each $\theta$, we have
\begin{equation}\label{sobolev2}\|g_{1}\|_{L_{\theta}^{\infty}L_{\rho}^{\infty}}\lesssim M_{0}^{1/2}\|g_{1}\|_{L_{\theta}^{\infty}L_{\rho}^{2}}\lesssim M_{0}^{1/2}\|g_{1}\|_{L_{\rho}^{2}L_{\theta}^{\infty}}\lesssim2^{l}M_{0}^{1/2}\|g_{1}\|_{L_{\rho}^{2}L_{\theta}^{2}}.\end{equation}
If, in the integral (\ref{polar}), we restrict to the region $|(\theta^{1})^{2}-1|\geq 1/100$, since the function $\theta\mapsto\theta^{1}$ has no critical point in this region, we can integrate by parts in $\theta$ many times to bound the left hand side of (\ref{polar}) by $2^{-10m}$, which implies (\ref{disper2}). Therefore, in (\ref{polar}), after replacing $g$ by $g_{1}$, we may also cutoff in the region $|\theta^{1}-1|\leq 1/50$ (the case $|\theta^{1}+1|\leq 1/50$ is treated in the same way), on which we can use $(\theta^{2},\theta^{3})$ as local coordinates, so that the integral (\ref{polar}) reduces to\begin{equation}\label{fin}I=\int_{\mathbb{R}\times\mathbb{R}^{2}}e^{{i}\big(t\Lambda(\rho)+\alpha\rho\sqrt{1-(\theta^{2})^{2}-(\theta^{3})^{2}}\big)}\psi_{1}(\rho)\psi_{2}(\theta^{2},\theta^{3})h(\rho,\theta^{2},\theta^{3})\,\mathrm{d}\rho\mathrm{d}\theta_{2}\mathrm{d}\theta_{3},\end{equation} where $\psi_{1}$ and $\psi_{2}$ are cutoff functions, and $h$ is obtained from $g_{1}$ after change of variables.

Now, recall $\alpha\sim 2^{r}$, let \begin{equation}\label{seteps}\epsilon'=2^{-r/2}\langle m\rangle^{2A};\qquad\epsilon''=2^{\max(j-m,-m/2)}\langle m\rangle^{2A},\end{equation} we proceed to estimate (\ref{fin}) in the region where $|\theta^{2}|+|\theta^{3}|\gtrsim \epsilon'$. After inserting a cutoff $(1-\varphi_{0})((\epsilon')^{-1}\theta^{2},(\epsilon')^{-1}\theta^{3})$, we will use Proposition \ref{ips0} to integrate by parts in $(\theta^{2},\theta^{3})$, for any fixed $\rho$, choosing \[K=\alpha, \quad n=1, \quad \epsilon\sim\epsilon', \quad\lambda\sim(\epsilon')^{-1},\] so that $I$ is bounded by $\exp(-\gamma\langle m\rangle^{A\gamma/2})$, which is $\lesssim 2^{-10m}$ if $A$ is large enough. Here we have used that $2^{l}\leq 2^{7\delta m}\leq(\epsilon')^{-1}$ and $\|\partial_{\theta}^{\mu}h(\rho,\theta)\|_{L_{\theta}^{2}}\lesssim 2^{|\mu|l}$, which is because $g_{1}(\rho,\cdot)$ is still a linear combination of spherical harmonics of degree $\lesssim 2^{l}$.

Now, in (\ref{fin}), we will restrict to the region where $|\theta^{2}|+|\theta^{3}|\ll \epsilon'$, and then fix $\theta^{2}$ and $\theta^{3}$. Let\[\Xi(\rho):=\partial_{\rho}\big(\Lambda(\rho)+t^{-1}\alpha\rho\theta^{1}\big)=\Lambda'(\rho)+t^{-1}\alpha\theta^{1},\] we will then consider the part where $|\Xi(\rho)|\gtrsim \epsilon''$. In this case, we will use Proposition \ref{ips0}, and set\[K=t,\quad n=1,\quad\epsilon\sim\epsilon'',\quad\lambda\sim\max(M_{0},(\epsilon'')^{-1})\] to bound $I\lesssim 2^{-10m}$. Here we have used $\|\partial_{\rho}^{\mu}h(\rho,\theta)\|_{L^{2}}\lesssim M_{0}^{|\mu|}$, because $\mathcal{F}_{\rho}g_{1}(\rho,\theta)(\tau)$ is supported in $|\tau|\lesssim M_{0}$.

Therefore, we can restrict to the region $|\Xi(\rho)|\lesssim \epsilon''$. Using H\"{o}lder and (\ref{sobolev2}), we have\[|I|\lesssim (\epsilon')^{2}\min\big(\epsilon''\|g_{1}\|_{L_{\theta}^{\infty}L_{\rho}^{\infty}},(\epsilon'')^{1/2}\|g_{1}\|_{L_{\theta}^{\infty}L_{\rho}^{2}}\big)\lesssim(\epsilon')^{2}2^{l}\min\big(\epsilon''M_{0}^{1/2},(\epsilon'')^{1/2}\big)\|g_{1}\|_{L_{\rho}^{2}L_{\theta}^{2}},\] which implies\begin{equation}\label{extraomega}|I|\lesssim \langle m\rangle^{A}2^{(j-3m)/2}2^{l}\|F\|_{L^{2}}\lesssim 2^{l}\langle m\rangle^{N_{0}+A}2^{-3m/2-j/3}\end{equation} by (\ref{part}). Since we trivially have\[\sup_{|\mu|\leq N/2+2}\big\|e^{it\Lambda}\Gamma^{\mu}S_{l}f_{jk}^{*}\big\|_{L^{\infty}}\lesssim2^{-(N/2-2)l},\] (\ref{disper3}) follows by interpolation.

Finally, (\ref{disper5}) is proved in the same way as (\ref{disper3}), with changes made in a few places: since $k\geq K_{0}$ and $\rho=|\xi|\sim 2^{k}$, in (\ref{polar}) the factor $\rho^{2}$ will count as $2^{2k}$, and the measure of the set where $|\Xi(\rho)|\lesssim \epsilon''$ is now $2^{3k}\epsilon''$ instead of $\epsilon''$ since $|\Lambda''(\rho)|\sim 2^{-3k}$. Moreover in (\ref{seteps}) we can set $\epsilon'=2^{-(r+k)/2}\langle m\rangle^{2A}$ instead of $2^{-r}\langle m\rangle^{2A}$, thus in total we have a loss of $2^{4k}$ compared with the proof of (\ref{disper3}), thus we have \[\sup_{|\mu|\leq N/2+2}\big\|e^{{i}t\Lambda}\Gamma^{\mu}f_{jk}^{*}\big\|_{L^{\infty}}\lesssim 2^{-(3m+j)/2}2^{4k+l}\langle m\rangle^{-N_{0}+A}.\] Since we trivially have \[\sup_{|\mu|\leq N/2+2}\big\|e^{{i}t\Lambda}\Gamma^{\mu}f_{jk}^{*}\big\|_{L^{\infty}}\lesssim \min(2^{-(N/2-2)k},2^{-(N/2-2)l}),\] we easily get (\ref{disper5}) by interpolation. 
\end{proof}
\begin{corollary}\label{cor} Suppose $f=f(x)$ is a function with $\|f\|_{X}\lesssim \varepsilon$, and let $v=e^{{i}t\Lambda}f$ with $\Lambda$ equaling one of the $\Lambda_{\alpha}$, then we have\begin{equation}\sum_{|\mu|\leq N/2}\|\Gamma^{\mu}v\|_{L^{\infty}}\lesssim\frac{\varepsilon}{(1+|t|)\log^{(N_{0}-2)}(2+|t|)}.\end{equation} 
\end{corollary}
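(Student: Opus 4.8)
The plan is to reduce the corollary to a summation of the linear dispersion estimates of Proposition~\ref{disper0} over the frequency--space--angle decomposition of the profile $f$. The case $|t|\lesssim 1$ is immediate from Sobolev embedding: since $\partial_i\in\Gamma$ we have $\|\Gamma^\mu v\|_{L^\infty}\lesssim\|v\|_{\mathcal{H}^{|\mu|+2}}\lesssim\|v\|_{\mathcal{H}^N}$, and since $\|\Gamma^\beta v\|_{L^2}=\|e^{it\Lambda}\Gamma^\beta f\|_{L^2}=\|\Gamma^\beta f\|_{L^2}$ for every $\beta$, this gives $\|v\|_{\mathcal{H}^N}=\|f\|_{\mathcal{H}^N}\lesssim\|f\|_X\lesssim\varepsilon$, which already proves the claim there. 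So assume $|t|\sim 2^m$ with $m$ large and, by symmetry, $t>0$. Using that $\sum_{j:(j,k)\in\mathcal{J}}\varphi_j^{(k)}\equiv 1$ on the support of $\varphi_k$, hence $\sum_j f_{jk}^*=P_kf$ and $f=\sum_{(j,k)\in\mathcal{J}}f_{jk}^*$, together with $\sum_{l\ge0}S_l=\mathrm{Id}$, I write $v=\sum_{(j,k)\in\mathcal{J}}\sum_{l\ge0}e^{it\Lambda}S_l f_{jk}^*$. Since $\Lambda_\nu(\xi)$ is radial, $\Gamma$ commutes with $e^{it\Lambda}$; the rotational part $\Omega$ of $\Gamma$ also commutes with $P_k$, $Q_{jk}$ and $S_l$, while the $\partial$-part can be moved past these localizations at the cost of $O(1)$ shifts of the indices $(j,k,l)$ and harmless factors $2^k$ controlled by $\|f\|_X$. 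Hence it suffices to show
\[\sum_{|\mu|\le N/2+2}\ \sum_{(j,k)\in\mathcal{J}}\ \sum_{l\ge0}\ \big\|e^{it\Lambda}\Gamma^\mu S_l f_{jk}^*\big\|_{L^\infty}\ \lesssim\ \frac{\varepsilon}{2^m\langle m\rangle^{N_0-2}}.\]

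I would then split the $(j,k)$-sum according to the five regimes of Proposition~\ref{disper0} and insert the bounds (\ref{disper1})--(\ref{disper5}) (each multiplied by $\varepsilon$ since $\|f\|_X\lesssim\varepsilon$). To make the sum over $l$ and the sum over large $k$ converge I would also use the two elementary estimates $\|e^{it\Lambda}\Gamma^\mu S_l f_{jk}^*\|_{L^\infty}\lesssim 2^{-(N/2-2)l}\varepsilon$, coming from $\|S_l g\|_{L^2}\lesssim 2^{-(N/2-2)l}\|\Omega^{N/2-2}g\|_{L^2}$ and Bernstein, and $\|e^{it\Lambda}\Gamma^\mu S_l f_{jk}^*\|_{L^\infty}\lesssim 2^{-(N/2-2)k}\varepsilon$ for $k\ge K_0$, coming from $\|P_kf\|_{L^2}\lesssim 2^{-Nk}\|f\|_{\mathcal{H}^N}$; after these the $l$-sum and the tail of the $k$-sum cost only a factor $\lesssim\langle m\rangle$. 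The remaining summations are routine geometric series: for $k\le -K_0$, (\ref{disper1}) produces a profile in $j$ of height $2^{-m}$ peaked at $j=m+k$ and cut off at $j\ge -k$, whose total over all such $(j,k)$ is $\lesssim\varepsilon\,2^{-m}\langle m\rangle^{-(N_0-1)}$; for $|k|<K_0$ with $|j-m|>A\log m$, (\ref{disper3}) together with $4\delta<1/3$ sums over $l$ and then $j$ to $\lesssim\varepsilon\,2^{-3m/2+4\delta m}\langle m\rangle$, far smaller than needed; for $k\ge K_0$, (\ref{disper2}) and (\ref{disper5}) combined with the frequency decay $2^{-(N/2-2)k}$ handle the $k$-sum and give a contribution dominated by the medium-frequency one, which is the easiest case; and for $|k|<K_0$ with $|j-m|\le A\log m$, (\ref{disper4}) contributes $\lesssim\varepsilon\,2^{-m}\langle m\rangle^{-N_0+A}$ on each of the $\lesssim\langle m\rangle$ relevant shells. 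Summing over all regimes gives the displayed bound, which is the assertion since $2^m\sim 1+|t|$ and $\langle m\rangle\sim\log(2+|t|)$.

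The main obstacle is the medium-frequency regime $|k|<K_0$, $|j-m|\lesssim\log m$: this is precisely where the Klein--Gordon flow decays only at the borderline rate $2^{-m}$, with no extra power of $|t|$ gained from oscillation, so the decay must be read off from the weighted $L^1$ part $\langle j\rangle^{N_0}2^j\|\widehat{\Gamma^\mu f}\|_{L^1}$ of the $Z$ norm via Hausdorff--Young. The per-shell estimate $\langle j\rangle^{-N_0}2^{-j}\varepsilon$, summed over the $O(\log m)$ shells $j\in[m-A\log m,\,m+A\log m]$ on which $2^{-j}=2^{-m}2^{m-j}$ can be as large as $2^{-m}\langle m\rangle^{O(1)}$, is exactly the source of the logarithmic loss in the statement, and the $\langle j\rangle^{N_0}$ weight in the $Z$ norm is chosen precisely so that this sum still carries a large negative power of $\log(2+|t|)$. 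Every other regime is better by a large power of $\langle m\rangle$ or of $2^{-m}$, so once the medium-frequency bookkeeping is carried out the proof is finished.
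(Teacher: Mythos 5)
Your argument is correct and is essentially the paper's own proof: decompose $v=\sum_{(j,k)}e^{it\Lambda}f_{jk}^{*}$ (with the extra $S_{l}$ refinement made explicit), invoke Proposition \ref{disper0} regime by regime, use the trivial $2^{-(N/2-2)l}$ and $2^{-(N/2-2)k}$ bounds to make the $l$- and high-$k$ sums converge, and observe that the medium-frequency shells $|j-m|\lesssim\log m$ give the dominant, borderline $2^{-m}$ contribution. The only caveat is that your bookkeeping (like the paper's, which silently drops the $\langle m\rangle^{A}$ factor of (\ref{disper4}) when stating its unified per-atom bound) really yields the exponent $N_{0}-A-O(1)$ rather than exactly $N_{0}-2$; since $A\ll N_{0}$ and only $\log^{20}$ decay is used downstream, this discrepancy is immaterial.
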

\begin{proof} Let $1+|t|\sim 2^{m}$, and we decompose\begin{equation}v=\sum_{(j,k)\in\mathcal{J}}e^{{i}t\Lambda}f_{jk}^{*}.\end{equation} Now (\ref{disper1})-(\ref{disper5}) in particular implies\[\big\|e^{{i}t\Lambda}\Gamma^{\mu}f_{jk}^{*}\big\|_{L^{\infty}}\lesssim\varepsilon(1+2^{k/2})^{-1}(\max\langle m\rangle,\langle j\rangle)^{-N_{0}}2^{-\max(m,j)}\]for each $|\mu|\leq N/2$. Therefore
\begin{eqnarray}\|\Omega^{\mu}v\|_{L^{\infty}}&\lesssim &\sum_{(j,k)\in\mathcal{J}}\varepsilon(1+2^{k/2})^{-1}\max(\langle m\rangle,\langle j\rangle)^{-N_{0}}\cdot2^{-\max(m,j)}\nonumber\\
&\lesssim&\varepsilon\sum_{j\geq 0}2^{-\max(m,j)}\max(\langle m\rangle,\langle j\rangle)^{-N_{0}}\cdot\sum_{k=-j}^{\infty}(1+2^{k/2})^{-1}\nonumber\\
&\lesssim&\varepsilon\sum_{j\geq 0}2^{-\max(m,j)}(\max\langle m\rangle,\langle j\rangle)^{-(N_{0}-1)}\nonumber\\
&\lesssim &\varepsilon2^{-m}\langle m\rangle^{-(N_{0}-2)},
\end{eqnarray} which is what we need.
\end{proof}
\begin{proposition}[Basic bilinear estimates]\label{bilinear}
For the bilinear operator $T$ defined by
\begin{equation}\label{def}\mathcal{F}T(f,g)(\xi)=\int_{\mathbb{R}^{3}}K(\xi,\eta)\widehat{f}(\xi-\eta)\widehat{g}(\eta)\,\mathrm{d}\eta,\end{equation} we have the followings:
\begin{enumerate}
\item If $\|\mathcal{F}_{\xi,\eta}^{-1}K\|_{L^{1}}\leq 1$, and \[p,q,r\in[1,\infty],\qquad \frac{1}{r}=\frac{1}{p}+\frac{1}{q},\] then we have
\begin{equation}\label{biliest}\|T(f,g)\|_{L^{r}}\lesssim\|f\|_{L^{p}}\|g\|_{L^{q}}.\end{equation}
\item  If $K$ satisfies \[\sup_{\xi}\int_{\mathbb{R}^{3}}|K(\xi,\eta)|^{2}\,\mathrm{d}\eta+\sup_{\eta}\int_{\mathbb{R}^{3}}|K(\xi,\eta)|^{2}\,\mathrm{d}\xi\lesssim 1,\] then we have \begin{equation}\label{biliest2}\|T(f,g)\|_{L^{2}}\lesssim\|f\|_{L^{2}}\|g\|_{L^{2}}.\end{equation}
\item Suppose $f$ and $g$ are \emph{radial}, and that $K$ is bounded by $1$ and is supported where \[|\xi|\sim 2^{k},\quad|\xi-\eta|\sim 2^{k_{1}},\quad |\eta|\sim 2^{k_{2}};\quad|\Phi(\xi,\eta)|\leq\epsilon,\] where $\Phi=\Phi_{\sigma\mu\nu}$ is defined in (\ref{phasedef}). Then we have\begin{equation}\label{radimp}\|T(f,g)\|_{L^{2}}\lesssim\min\big(2^{k/2},2^{-\min(k,0)/2}\epsilon^{1/2}\big)2^{-(k_{1}+k_{2})}\|\widehat{f}\|_{L^{1}}\|\widehat{g}\|_{L^{1}}.\end{equation}
\end{enumerate}
\end{proposition}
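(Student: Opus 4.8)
The three parts are essentially independent: (1) and (2) are standard bilinear bounds, and (3), the new radial estimate, is where the actual work lies. For (1), write $k:=\mathcal{F}_{\xi,\eta}^{-1}K$, so that $\|k\|_{L^1(\mathbb{R}^3\times\mathbb{R}^3)}\le1$; unwinding the Fourier definition of $T$ produces the physical-space formula $T(f,g)(x)=\int_{\mathbb{R}^3\times\mathbb{R}^3}k(y,z)f(x+y)g(x+y+z)\,\mathrm{d}y\,\mathrm{d}z$ (up to an absolute constant fixed by the Fourier normalization), whereupon Minkowski's integral inequality, translation invariance and H\"older with $1/r=1/p+1/q$ give $\|T(f,g)\|_{L^r}\lesssim\|k\|_{L^1}\|f\|_{L^p}\|g\|_{L^q}$, i.e.\ (\ref{biliest}). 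For (2), by Plancherel it suffices to estimate $\|\mathcal{F}T(f,g)\|_{L^2}$; for each fixed $\xi$, Cauchy--Schwarz in $\eta$ gives
\[|\mathcal{F}T(f,g)(\xi)|^2\le\Big(\int_{\mathbb{R}^3}|K(\xi,\eta)|^2\,\mathrm{d}\eta\Big)\Big(\int_{\mathbb{R}^3}|\widehat f(\xi-\eta)|^2|\widehat g(\eta)|^2\,\mathrm{d}\eta\Big)\lesssim\int_{\mathbb{R}^3}|\widehat f(\xi-\eta)|^2|\widehat g(\eta)|^2\,\mathrm{d}\eta\]
by the first half of the hypothesis, and integrating in $\xi$ and using Fubini in the variable $\xi-\eta$ bounds this by $\|\widehat f\|_{L^2}^2\|\widehat g\|_{L^2}^2\sim\|f\|_{L^2}^2\|g\|_{L^2}^2$, which is (\ref{biliest2}); only the first half of the hypothesis is used, the second yielding the same bound by symmetry.

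For (3), the plan is to exploit radiality to collapse $\|T(f,g)\|_{L^2}$ to a one-dimensional integral. Replacing $\widehat f,\widehat g$ by $\varphi_{k_1}\widehat f,\varphi_{k_2}\widehat g$, which preserves radiality and only decreases the $L^1$ norms, we may assume $\widehat f,\widehat g$ supported in $\{|\cdot|\sim2^{k_1}\}$ and $\{|\cdot|\sim2^{k_2}\}$. Since $\widehat f,\widehat g$ are radial and $\Phi_{\sigma\mu\nu}$, hence $\mathrm{supp}\,K$, involves $\eta$ only through $|\xi-\eta|$ and $|\eta|$ for fixed $|\xi|$, Plancherel and $|K|\le1$ give $|\mathcal{F}T(f,g)(\xi)|\le H(|\xi|)$, where, writing $|\widehat f|(\rho)$ for the radial profile of $|\widehat f|$ and using the elementary coarea identity $\int_{\mathbb{R}^3}G(|\xi-\eta|,|\eta|)\,\mathrm{d}\eta=\frac{2\pi}{|\xi|}\iint_{||\xi|-b|\le a\le|\xi|+b}G(a,b)\,ab\,\mathrm{d}a\,\mathrm{d}b$,
\[H(c)=\frac{2\pi}{c}\iint_{\mathcal{S}(c)}|\widehat f|(a)\,|\widehat g|(b)\,ab\,\mathrm{d}a\,\mathrm{d}b,\qquad\mathcal{S}(c):=\{a\sim2^{k_1},\,b\sim2^{k_2},\,|\Lambda_\sigma(c)-\Lambda_\mu(a)-\Lambda_\nu(b)|\le\epsilon\}.\]
Thus $\|T(f,g)\|_{L^2}^2\lesssim\int_0^\infty H(c)^2c^2\,\mathrm{d}c=(2\pi)^2\int_{c\sim2^k}\big(\iint_{\mathcal{S}(c)}|\widehat f|(a)|\widehat g|(b)\,ab\,\mathrm{d}a\,\mathrm{d}b\big)^2\mathrm{d}c$.

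The key step is to expand the square and integrate in $c$ \emph{first}: writing the square as an integral over $(a,b,a',b')$ and applying Tonelli, the inner $c$-integral is at most $\int_{c\sim2^k}\mathbf{1}[|\Lambda_\sigma(c)-\Lambda_\mu(a)-\Lambda_\nu(b)|\le\epsilon]\,\mathrm{d}c$. Since $c\mapsto\Lambda_\sigma(c)$ is strictly monotone on $(0,\infty)$ with $|\Lambda_\sigma'(c)|=c_\sigma^2c/|\Lambda_\sigma(c)|\sim_{\mathcal B}\min(1,c)\sim2^{\min(k,0)}$ on $\{c\sim2^k\}$, this $c$-set has measure $\lesssim\min(2^k,\epsilon\,2^{-\min(k,0)})$, uniformly in $a,b$; pulling out this factor and using $\int|\widehat f|(a)\,a\,\mathrm{d}a\lesssim2^{-k_1}\|\widehat f\|_{L^1}$ and the analogous bound for $g$, we reach $\|T(f,g)\|_{L^2}^2\lesssim\min(2^k,\epsilon\,2^{-\min(k,0)})\,2^{-2(k_1+k_2)}\|\widehat f\|_{L^1}^2\|\widehat g\|_{L^1}^2$, which is (\ref{radimp}) after taking square roots.

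The one genuine obstacle is in (3): the power of $\epsilon$ must be extracted from the constraint on the \emph{output} frequency $c=|\xi|$ rather than from restricting $a$ or $b$ — which is precisely why one expands the square and integrates in $c$ first. Once that is set up, the sole analytic input is the dyadic lower bound $|\Lambda_\sigma'(c)|\gtrsim_{\mathcal B}2^{\min(k,0)}$ for $c\sim2^k$, immediate from the formula for $\Lambda_\sigma$; everything else is routine bookkeeping of the dyadic parameters and of the $\mathcal B$-dependent constants entering the coarea reduction.
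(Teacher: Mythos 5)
Your proposal is correct and takes essentially the same route as the paper: part (3) rests on the identical bipolar change of variables $\eta\mapsto(|\xi-\eta|,|\eta|)$ with Jacobian factor $\lambda/(\rho\tau)$, the $L^{1}$ bounds on the radial profiles, and the one-dimensional measure bound coming from $|\Lambda_{\sigma}'|\sim 2^{\min(k,0)}$ in the output radius, your ``expand the square and integrate in $c$ first'' step being the same estimate the paper packages as duality against $J(\lambda)$ plus Cauchy--Schwarz in $\lambda$. The only minor deviation is in (2), where your single Cauchy--Schwarz in $\eta$ plus Fubini uses just one half of the kernel hypothesis, while the paper's Schur-type argument uses both; both arguments are valid.
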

\begin{proof} (1) This is standard; see \cite{IP12}.

(2) We may assume that $F=\widehat{f}$ and $G=\widehat{g}$ are nonnegative. Let $\mathcal{F}T(f,g)=H$, by Cauchy-Schwartz we have\[|H(\xi)|^{2}\lesssim\int_{\mathbb{R}^{3}}K(\xi,\eta)F(\xi-\eta)G^{2}(\eta)\,\mathrm{d}\eta\cdot\int_{\mathbb{R}^{3}}K(\xi,\eta)F(\xi-\eta)\,\mathrm{d}\eta.\] The second factor is bounded by $\|F\|_{L^{2}}$ by Cahchy-Schwartz again, thus we have\[\|H\|_{L^{2}}^{2}\lesssim \|f\|_{L^{2}}\cdot\int_{\mathbb{R}^{3}}\int_{\mathbb{R}^{3}}K(\xi,\eta)F(\xi-\eta)G^{2}(\eta)\,\mathrm{d}\eta\mathrm{d}\xi,\] where the last integral equals\[\int_{\mathbb{R}^{3}}G^{2}(\eta)\,\mathrm{d}\eta\int_{\mathbb{R}^{3}}K(\xi,\eta)F(\xi-\eta)\mathrm{d}\xi\lesssim\int_{\mathbb{R}^{3}}G^{2}(\eta)\cdot \|f\|_{L^{2}}\,\mathrm{d}\eta=\|f\|_{L^{2}}\cdot\|g\|_{L^{2}}^{2},\] so this proves (2).

(3) Let $|\widehat{f}(\xi-\eta)|=F(|\xi-\eta|)$ and $|\widehat{g}(\eta)|=G(|\eta|)$, we may assume $\xi=(\lambda,0,0)$ and $\xi-\eta=(x,y,z)$, so that\[|\mathcal{F}T(f,g)(\xi)|\leq\int_{\mathbb{R}^{3}}F(\sqrt{x^{2}+y^{2}+z^{2}})G(\sqrt{(\lambda-x)^{2}+y^{2}+z^{2}})\,\mathrm{d}x\mathrm{d}y\mathrm{d}z.\] Make the change of variables \[\rho=\sqrt{x^2+y^{2}+z^2},\quad\tau=\sqrt{(\lambda-x)^{2}+y^2+z^2},\quad\theta=\tan^{-1}\frac{z}{y},\]the (inverse) Jacobian being\[J^{-1}:=\left|\begin{array}{ccc}
\rho_{x}&\rho_{y}&\rho_{z}\\
\tau_{x}&\tau_{y}&\tau_{z}\\
\theta_{x}&\theta_{y}&\theta_{z}
\end{array}\right|=\frac{1}{\rho\tau(y^{2}+z^{2})}\left|\begin{array}{ccc}
x&y&z\\
x-\lambda&y&z\\
0&-z&y\end{array}\right|=\frac{\lambda}{\rho\tau},\] so we have\begin{equation}\label{radconv}|\mathcal{F}T(f,g)(\xi)|\leq\frac{2\pi}{\lambda}\int_{|\Phi|\leq\epsilon}\rho\tau F(\rho)G(\tau)\,\mathrm{d}\rho\mathrm{d}\tau.\end{equation} Note that \[\|F(\rho)\|_{L_{\rho}^{1}}\sim 2^{-2k_{1}}\|\widehat{f}\|_{L^{1}};\qquad \|G(\tau)\|_{L_{\tau}^{1}}\sim 2^{-2k_{2}}\|\widehat{g}\|_{L^{1}(},\]this implies that\[|\mathcal{F}T(f,g)(\xi)|\lesssim 2^{-k-k_{1}-k_{2}}\|\widehat{f}\|_{L^{1}}\|\widehat{g}\|_{L^{1}},\] which implies the first part of (\ref{radimp}) by H\"{o}lder; as for the second part, choose a suitable function $J(\lambda)$ with $\|J\|_{L^{2}}=1$, we have\[\|T(f,g)\|_{L^{2}}\lesssim 2^{k_{1}+k_{2}}\int_{|\Phi|\leq\epsilon}F(\rho)G(\tau)J(\lambda)\,\mathrm{d}\rho\mathrm{d}\tau\mathrm{d}\lambda,\] then we fix $\rho$ and $\tau$ and notice $|\partial_{\lambda}\Phi|\sim 2^{\min(k,0)}$, so the measure of $\{\lambda:|\Phi|\leq\epsilon\}$ is bounded by $2^{-\min(k,0)}\epsilon$, then use H\"{o}lder to conclude.
\end{proof}
\section{Proof of Theorem \ref{mainkg}: Reduction to $Z$-norm estimate}\label{energy}Let $u$ be a solution to (\ref{nlkg}) on $[0,T]$. We define the function $v$ with value in $\mathbb{C}^{2d}$ by
\begin{equation}v_{\nu}=(\partial_{t}-i\Lambda_{\nu})u_{\nu},\qquad\nu\in\mathcal{P},\end{equation} where $u_{-\alpha}=u_{\alpha}$; so we have $v_{-\alpha}=\overline{v_{\alpha}}$. Moreover, let the corresponding profile $f$ be\begin{equation}\label{deff}f_{\nu}(t)=e^{{i}t\Lambda_{\nu}}v_{\nu}(t),\qquad0\leq t\leq T.\end{equation}
\begin{proposition}\label{loc} Suppose $g,h:\mathbb{R}^{3}\to\mathbb{R}^{d}$ are such that $\|(g,\partial_{x}g,h)\|_{\mathcal{H}^{N}}\leq\varepsilon_{0}$, then there exists a unique solution $u$ to (\ref{nlkg}) such that
\begin{equation}u\in C_{t}^{1}\mathcal{H}_{x}^{N}([0,1]\times\mathbb{R}^{3}\to\mathbb{R}^{d}),\quad\partial_{x}u\in C_{t}^{0}\mathcal{H}_{x}^{N}([0,1]\times\mathbb{R}^{3}\to\mathbb{R}^{d});\quad u(0)=g,\partial_{t}u(0)=h.\end{equation} Moreover, if  $\|(g,\partial_{x}g,h)\|_{Z}\leq\varepsilon_{0}$, then we have $f(t)\in C([0,1]\to Z)$, where $f(t)=(f_{\nu}(t))$ is defined as in (\ref{deff}) above.
\end{proposition}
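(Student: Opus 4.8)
The plan is to prove the existence/uniqueness/regularity part by the classical energy method for quasilinear hyperbolic systems — which on the short interval $[0,1]$ closes trivially by smallness — and then to read off the $Z$-norm statement from the Duhamel formula for the profile $f_\nu=e^{it\Lambda_\nu}v_\nu$.

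\emph{Energy estimate and construction.} Work directly with the second-order form \eqref{nlkg}. For $|\beta|\le N$ apply $\Gamma^\beta$ to the $\alpha$-th equation, pair with $\partial_t\Gamma^\beta u_\alpha$ in $L^2_x$, and sum over $\alpha,\beta$. Each $\Gamma\in\{\partial_i,\Omega_{ij}\}$ commutes exactly with $\partial_t^2-c_\alpha^2\Delta+b_\alpha^2$ (rotations commute with $\Delta$ and $\partial_t$), so the left side produces $\tfrac12\frac{d}{dt}\mathcal{E}(t)$ with $\mathcal{E}(t)\approx\sum_{|\beta|\le N}\big(\|\partial_t\Gamma^\beta u\|_{L^2}^2+\|\nabla\Gamma^\beta u\|_{L^2}^2+\|\Gamma^\beta u\|_{L^2}^2\big)$, the last (mass) term being the usual bonus of the Klein--Gordon operator. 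Since $[\Omega_{ij},\partial_k]$ is again a translation field, $\Gamma^\beta$ applied to $\mathcal{Q}_\alpha$ stays within the class of quadratic quasilinear forms: by the Leibniz rule it is a sum of such forms in $\Gamma^{\le\beta}u$, all lower order except the one carrying the full top-order derivative $\partial_j\partial_k\Gamma^\beta u_b$ with coefficient $A^{jk}_{\alpha b\gamma}u_\gamma+B^{jkl}_{\alpha b\gamma}\partial_l u_\gamma$. Here the symmetry of $A,B$ in the pair $(\alpha,b)$ is exactly what turns this term, after one integration by parts in $x_j$, into a total $\frac{d}{dt}$ of a correction of size $O(\|\partial u\|_{L^\infty})\mathcal{E}$ plus an error $O(\|\partial u\|_{W^{1,\infty}})\mathcal{E}$. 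Absorbing the correction into $\widetilde{\mathcal{E}}\approx\mathcal{E}$ gives $\frac{d}{dt}\widetilde{\mathcal{E}}\lesssim\|(u,\partial u)\|_{W^{1,\infty}}\widetilde{\mathcal{E}}$; since $N=1000$, Sobolev embedding bounds $\|(u,\partial u)\|_{W^{1,\infty}}\lesssim\widetilde{\mathcal{E}}^{1/2}$, so $\frac{d}{dt}\widetilde{\mathcal{E}}\lesssim\widetilde{\mathcal{E}}^{3/2}$, which keeps $\widetilde{\mathcal{E}}(t)\lesssim\varepsilon_0^2$ on $[0,1]$ once $\widetilde{\mathcal{E}}(0)\lesssim\varepsilon_0^2$. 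The solution is then produced by the standard iteration, freezing the coefficients $A u_\gamma^{(n)}+B\partial u_\gamma^{(n)}$ and keeping $\partial_j\partial_k u_b^{(n+1)}$: for small coefficients the resulting linear system is symmetrizable hyperbolic (again by the symmetry in $(\alpha,b)$), solvable on $[0,1]$, and the estimate above gives uniform bounds $\widetilde{\mathcal{E}}^{(n)}\lesssim\varepsilon_0^2$; the differences $u^{(n+1)}-u^{(n)}$ solve a linear system of the same type with $L^\infty$-bounded coefficients, hence converge in $H^1\times L^2$, and the limit $u$ has $u\in C^1_t\mathcal{H}^N_x$, $\partial_x u\in C^0_t\mathcal{H}^N_x$ after the usual weak-continuity plus norm-continuity upgrade, with uniqueness from the same low-regularity difference estimate.

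\emph{The $Z$-norm statement.} Since $\partial_t f_\nu=e^{it\Lambda_\nu}\mathcal{N}_\nu(v,v)$, we have $f_\nu(t)=f_\nu(0)+\int_0^t e^{is\Lambda_\nu}\mathcal{N}_\nu(v(s),v(s))\,ds$ with $f_\nu(0)=h_\nu-i\Lambda_\nu g_\nu$. First, $f_\nu(0)\in Z$ with norm $\lesssim\varepsilon_0$: for $h_\nu$ this is the hypothesis, and $\|\Lambda_\nu g\|_Z\lesssim\|g\|_Z+\|\partial_x g\|_Z$ follows by a routine frequency-localized computation — for $|k|\ge K_0$ the rescaled symbol $\Lambda_\nu(\xi)|\xi|^{-1}\varphi_k(\xi)$ is a Mikhlin multiplier, and for $|k|<K_0$ the symbol $\Lambda_\nu(\xi)\varphi_k(\xi)$ has Schwartz kernel, so $\Lambda_\nu$ is bounded on each weighted space entering \eqref{part} from the $g,\partial_x g$ data (the $L^2$-weights there are subcritical, well below $\langle x\rangle^{3/2}$). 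For the increment, run a continuity (bootstrap) argument on $[0,1]$: assuming $\sup_{[0,t]}\|f_\nu\|_Z\le 2\varepsilon_0$, estimate $\int_0^t e^{is\Lambda_\nu}\mathcal{N}_\nu(v,v)\,ds$ in $Z$. The $\widehat{\cdot}\in L^1$ component of $Z$ is insensitive to the flow, since $|\widehat{\Gamma^\mu e^{is\Lambda_\nu}w}|=|\widehat{\Gamma^\mu w}|$ pointwise ($\Lambda_\nu$ real and radial, commuting with $\Gamma$), and $\|\widehat{\Gamma^\mu\mathcal{N}_\nu(v,v)}\|_{L^1}\lesssim\|f\|_Z^2$ because $\widehat{\mathcal N_\nu(v,v)}$ is a convolution of two factors whose Fourier transforms have controlled $L^1$ norm and the lone derivative is harmless; for the weighted-$L^2$ components one commutes the spatial weight past $P_k e^{is\Lambda_\nu}$, the commutator being controlled on $[0,1]$ (the propagator displaces mass by $\lesssim 1$, negligible against the weight on the support of $Q_{jk}$, with the gain factors in \eqref{part} absorbing the remaining losses at the extreme frequencies), and then uses that $\mathcal N_\nu(v,v)$, being quadratic in $v=e^{-is\Lambda_\nu}f$ with $f\in Z$, supplies more than the required spatial decay (the $\Gamma$-count $N/2+2$ in \eqref{part} leaves ample room in the Leibniz rule). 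Collecting these, $\sup_{[0,1]}\|f_\nu\|_Z\lesssim\varepsilon_0$, and continuity of $t\mapsto f_\nu(t)\in Z$ is then immediate from the Duhamel formula.

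\emph{Main obstacle.} None of this is deep on $[0,1]$; the only places requiring genuine care are the bookkeeping in the energy estimate — verifying that the symmetry of $A,B$ really converts the top-order quasilinear term into a total time-derivative plus lower order, and that all $\Gamma$-commutators stay first order — and, in the $Z$-norm step, controlling the interaction of the spatial weights in \eqref{part} with the propagators $e^{is\Lambda_\nu}$ and the frequency cutoffs $P_k$ at the very low and very high frequencies. Both are routine but slightly tedious.
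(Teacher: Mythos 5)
The paper itself does not prove this proposition; it defers to \cite{IP12}, Propositions 2.1 and 2.4, whose proofs are exactly the route you take: a symmetrized energy/iteration argument for local existence, followed by a local-in-time propagation of the weighted profile norm via Duhamel. Your energy part is sound: the symmetry of $A,B$ in $(\alpha,\beta)$ does turn the top-order quasilinear term, after integrating by parts in $x_j$, into a total time derivative of a cubic correction plus errors of size $O(\|(u,\partial u)\|_{W^{1,\infty}})\mathcal{E}$, the $\Gamma$-commutators stay first order, and on $[0,1]$ the resulting inequality closes by smallness; the iteration/uniqueness scheme is standard.

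The gap is in the $Z$-norm step, specifically your treatment of the Fourier-$L^{1}$ component. You declare it ``insensitive to the flow'' because $|\widehat{\Gamma^{\mu}e^{is\Lambda_{\nu}}w}|=|\widehat{\Gamma^{\mu}w}|$ and then bound only the unlocalized quantity $\|\widehat{\Gamma^{\mu}\mathcal{N}_{\nu}(v,v)}\|_{L^{1}}$. But the norm in \eqref{part} is not $\|\widehat{\Gamma^{\mu}f}\|_{L^{1}}$: it is $\langle j\rangle^{N_{0}}2^{j}\|\widehat{\Gamma^{\mu}f_{jk}}\|_{L^{1}}$, computed on the localized pieces $f_{jk}=\varphi^{(k)}_{j}(x)P_{k}f$ of \eqref{qjk}, with a supremum over $(j,k)$ as in \eqref{fullz}. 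The spatial cutoff does not commute with $e^{is\Lambda_{\nu}}$, and unitarity of the propagator on the Fourier side produces no decay in $j$ at all, so an unlocalized Fourier-$L^{1}$ bound cannot beat the weight $2^{j}\langle j\rangle^{N_{0}}$ for large $j$; as written that part of the supremum is simply not controlled. This cannot be skipped, since for $|k|<K_{0}$ the weighted Fourier-$L^{1}$ piece is the strongest part of $Z_{jk}$. The repair is the same bookkeeping you sketch for the weighted $L^{2}$ components: commute the weight ($2^{j}$ on the support of $\varphi^{(k)}_{j}$) past $P_{k}e^{is\Lambda_{\nu}}$ using the $O(1)$ displacement and rapidly decaying frequency-localized kernel tails on $[0,1]$, distribute the weight onto one factor of the quadratic nonlinearity by Leibniz while the other is bounded through Sobolev, and only then pass to the Fourier-$L^{1}$ norm of the localized piece (e.g.\ by Cauchy--Schwarz on the $O(1)$ frequency support of $f^{*}_{jk}$ when $|k|<K_{0}$). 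This is routine on a unit interval and is precisely the content of \cite{IP12}, Proposition 2.4, but your argument as stated omits it.
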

\begin{proof}This is proved, with slightly different parameters, in \cite{IP12}, Proposition 2.1 and 2.4; the proof in our case is basically the same.
\end{proof}
With Proposition \ref{loc}, we can reduce the proof of Theorem \ref{mainkg} to the following a priori estimate.
\begin{proposition}\label{bootstrap} Suppose $u$ is a solution to (\ref{nlkg}) on a time interval $[0,T]$ with initial data $u(0)=g$ and $u_{t}(0)=h$ such that \begin{equation}u\in C_{t}^{1}\mathcal{H}_{x}^{N}([0,T]\times\mathbb{R}^{3}\to\mathbb{R}^{d}),\quad\partial_{x}u\in C_{t}^{0}\mathcal{H}_{x}^{N}([0,T]\times\mathbb{R}^{3}\to\mathbb{R}^{d}),\end{equation}  and let $f(t)$ be defined accordingly. Assume \[\|(g,\partial_{x}g,h)\|_{X}\leq\varepsilon\leq\varepsilon_{0},\qquad \sup_{0\leq t\leq T}\|f(t)\|_{X}\leq\varepsilon_{1}\ll 1,\] then we have \[\sup_{0\leq t\leq T}\|f(t)\|_{X}\lesssim \varepsilon_{1}^{3/2}+\varepsilon.\]
\end{proposition}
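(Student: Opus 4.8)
The plan is to control the two ingredients of $\|f(t)\|_X$ separately: the Sobolev--type part $\sup_{|\mu|\le N}\|\Gamma^\mu f(t)\|_{L^2}$ by an energy argument driven by the pointwise decay of Corollary \ref{cor}, and the localization part $\|f(t)\|_Z$ through the Duhamel formula (\ref{duhamel}), which reduces it to the main bilinear $Z$--norm estimate, Proposition \ref{main01}, proved in Sections \ref{beginning}--\ref{phaseprop}.

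\emph{Energy part.} Work with the diagonalized first--order system (\ref{diag}) for $v=(v_\sigma)_{\sigma\in\mathcal{P}}$. Every rotation $x_i\partial_j-x_j\partial_i$ and every translation $\partial_i$ commutes with the radial multiplier $\Lambda_\sigma$ and with $\partial_t$, so $\Gamma^\mu$ applied to (\ref{diag}) gives $(\partial_t+i\Lambda_\sigma)\Gamma^\mu v_\sigma=\Gamma^\mu\mathcal{N}_\sigma(v,v)$, and by the Leibniz rule together with the commutation relations among the $\Gamma$'s the right side is a finite sum of quasilinear quadratic expressions in $\Gamma^{\mu_1}v$ and $\Gamma^{\mu_2}v$ with $|\mu_1|+|\mu_2|\le|\mu|$. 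Since $i\Lambda_\sigma$ is skew--adjoint, differentiating $\sum_{|\mu|\le N}\|\Gamma^\mu v_\sigma\|_{L^2}^2$ produces only the nonlinear contribution, whose only top--order piece (carrying two extra derivatives) is the one in which all of $\Gamma^\mu$ falls on the factor carrying $\partial^2$. The symmetry of the tensors $A$ and $B$ in $(\alpha,\beta)$ is exactly what allows the classical symmetrization removing this piece: one passes to a modified functional $\mathcal{E}(t)\approx\|v(t)\|_{\mathcal{H}^N}^2$ (uniformly for small $v$) satisfying
\[
\big|\frac{d}{dt}\mathcal{E}(t)\big|\lesssim\Big(\sum_{|\nu|\le N/2}\|\Gamma^\nu v(t)\|_{L^\infty}+\|\Gamma^\nu(\partial_x,\partial_t)v(t)\|_{L^\infty}\Big)\mathcal{E}(t).
\]
By Corollary \ref{cor} applied to $v=e^{it\Lambda}f$, and using the bootstrap hypothesis $\|f(t)\|_X\le\varepsilon_1$, the bracket is $\lesssim\varepsilon_1(1+t)^{-1}\log^{-(N_0-2)}(2+t)$, which is integrable on $[0,\infty)$ since $N_0-2>1$, with total integral $\lesssim\varepsilon_1$. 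Gronwall together with $\mathcal{E}(0)\lesssim\|(g,\partial_xg,h)\|_{\mathcal{H}^N}^2\le\varepsilon^2$ then gives $\mathcal{E}(t)\le\mathcal{E}(0)e^{C\varepsilon_1}\lesssim\varepsilon^2$, hence $\sup_{|\mu|\le N}\|\Gamma^\mu f(t)\|_{L^2}=\sup_{|\mu|\le N}\|\Gamma^\mu v(t)\|_{L^2}\lesssim\varepsilon$.

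\emph{$Z$ part.} Since $\|\cdot\|_Z$ is a norm, $\|f(t)\|_Z\le\|f(0)\|_Z+\|f(t)-f(0)\|_Z$. As $f_\sigma(0)=v_\sigma(0)=h_\sigma-i\Lambda_\sigma g_\sigma$, the first term is $\lesssim\|(g,\partial_xg,h)\|_Z\le\varepsilon$, which is the observation behind the $Z$--continuity in Proposition \ref{loc}. For the second term, (\ref{duhamel}) identifies $\widehat{f_\sigma}(t,\xi)-\widehat{f_\sigma}(0,\xi)$ with the integral over $[0,t]$ of $e^{is\Phi_{\sigma\mu\nu}(\xi,\eta)}m(\xi,\eta)\widehat{f_\mu}(s,\xi-\eta)\widehat{f_\nu}(s,\eta)$, summed over $\mu,\nu\in\mathcal{P}$. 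Translations split this bilinear form exactly onto its two inputs, while a rotation $\Omega_\xi$, using $\int\Omega_\eta(\cdot)\,\mathrm{d}\eta=0$ and the invariance of $\Phi_{\sigma\mu\nu}$ under simultaneous rotation of $\xi$ and $\eta$, also splits onto the two inputs up to replacing $m$ by $(\Omega_\xi+\Omega_\eta)m$, a new symbol of comparable size. Iterating, bounding $\|f(t)-f(0)\|_Z$ reduces to estimating, for all $(j,k)\in\mathcal{J}$, the $Z_{jk}$--norm of such Duhamel integrals with inputs $\Gamma^{\mu_1}f_\mu,\Gamma^{\mu_2}f_\nu$, $|\mu_1|+|\mu_2|\le N/2+2$, each of $X$--norm $\lesssim\varepsilon_1$ by hypothesis; this is precisely Proposition \ref{main01}, which yields $\|f(t)-f(0)\|_Z\lesssim\varepsilon_1^{3/2}$. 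Combining the two parts gives $\sup_{0\le t\le T}\|f(t)\|_X\lesssim\varepsilon+\varepsilon_1^{3/2}$, modulo Proposition \ref{main01}.

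\emph{Main obstacle.} Within the present reduction the only genuinely analytic step is the energy estimate, whose delicate points --- the quasilinear symmetrization (the reason the symmetry hypothesis on $A$, $B$ is imposed) and the commutator bookkeeping for $[\Gamma^\mu,\mathcal{N}_\sigma]$ --- are standard and already required to set up the local theory of Proposition \ref{loc}. The real difficulty is deferred to Proposition \ref{main01}: establishing the $Z$--norm bound for the Duhamel term without the non--degeneracy assumption (\ref{assm1}) and the mass--separation assumption (\ref{assm2}) requires the improved linear dispersion inequality of Proposition \ref{disper0}, the bilinear angular--localization lemma, a careful stationary--phase analysis near the spacetime resonance set $\mathcal{R}$, and the sharp integration--by--parts lemma (Proposition \ref{ips0}) for the degenerate low--frequency interactions near $(\xi,\eta)=(0,0)$.
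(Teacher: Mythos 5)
Your argument is essentially the paper's own proof: the energy part is handled exactly as in Proposition \ref{econtrol} (symmetrized quasilinear energy functional, decay from Corollary \ref{cor}, then Gronwall/integration in time), and the $Z$-norm part is reduced via the Duhamel formula (\ref{duhamelforf}) and commutation of $\Gamma$ through the bilinear term to the main estimate, Proposition \ref{main01}. The only (routine) point you gloss over is that the reduction to Proposition \ref{main01} also requires decomposing the time integral into dyadic blocks $[a,b]\subset[2^{m}-1,2^{m}]$ and the inputs into localized pieces $(f_{\mu})^{*}_{j_{1}k_{1}},(f_{\nu})^{*}_{j_{2}k_{2}}$, the factor $\langle M\rangle^{-20}$ in (\ref{atomest}) being what makes the subsequent summation over all these parameters harmless.
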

\subsection{Control of energy}
From now on we will fix a solution $u$ as described in Proposition \ref{bootstrap}, and the corresponding $f$; in this section we will recover the energy bounds. 
\begin{proposition}\label{econtrol} We have \begin{equation}\label{resenergy}\sup_{0\leq t\leq T}\sup_{|\mu|\leq N}\|\Gamma^{\mu}f(t)\|_{L^{2}}\lesssim \varepsilon_{1}^{3/2}+\varepsilon.\end{equation}
\end{proposition}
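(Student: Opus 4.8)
The plan is to run a standard energy/vector-field argument for the diagonalized system \eqref{diag}, promoting the $L^2$ bounds from $\mathcal{H}^N$ to the $\Gamma$-differentiated setting and controlling the Gronwall growth via the decay estimate \eqref{decaykg} (equivalently Corollary \ref{cor}). First I would commute the vector fields $\Gamma=(\partial_i, x_i\partial_j-x_j\partial_i)$ through \eqref{diag}. Since $\Lambda_\nu$ is rotation-invariant, the rotational part $\Omega$ of $\Gamma$ commutes with $e^{it\Lambda_\nu}$ and with $\Lambda_\nu$, while the translations $\partial_i$ commute with everything; thus $\Gamma^\mu f_\nu$ satisfies an equation of the same form with a nonlinearity that, by the Leibniz rule, is a finite sum of terms $\mathcal{N}_\sigma(\Gamma^{\mu_1}v, \Gamma^{\mu_2}v)$ with $|\mu_1|+|\mu_2|\le|\mu|\le N$ (the quasilinear structure producing at most two extra derivatives on the high-$\Gamma$ factor, which is why $N=1000$ leaves comfortable room). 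The symmetrization built into assumption (1) of Theorem \ref{mainkg} — the tensors $A,B$ symmetric in $\alpha,\beta$ — is exactly what allows the top-order term $\partial^N v\cdot\partial^N v\cdot\partial^2 v$ in \eqref{energy0} to be integrated against $\partial^N v$ with an integration by parts that moves one derivative off the worst factor, turning a potential derivative loss into a bounded commutator; this is the usual quasilinear energy identity and I would invoke the local theory computation (Proposition \ref{loc}, i.e.\ \cite{IP12}) rather than redo it.

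Next I would set up the energy functional. Define $\mathcal{E}_\mu(t)\approx\sum_\nu\|\Gamma^\mu v_\nu(t)\|_{L^2}^2 \approx \|\Gamma^\mu f(t)\|_{L^2}^2$ (equality of these $L^2$ norms up to constants since $e^{it\Lambda_\nu}$ is unitary and $\Omega$ commutes with it), with the symmetrizing lower-order corrections as in \eqref{energy0}. Differentiating in $t$ and using the commuted equations, all genuinely top-order contributions cancel by symmetry and one is left with
\begin{equation}
\Big|\frac{d}{dt}\mathcal{E}_\mu(t)\Big|\lesssim \Big(\sum_{|\nu|\le N/2}\|\Gamma^\nu(\partial_x,\partial_t)u(t)\|_{L^\infty}\Big)\sum_{|\mu'|\le N}\|\Gamma^{\mu'}f(t)\|_{L^2}^2,
\end{equation}
since in every product at least one factor carries $\le N/2\le |\mu|/2$ vector fields and can be placed in $L^\infty$. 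Here I would sum over $|\mu|\le N$ and write $\mathcal{E}(t)=\sum_{|\mu|\le N}\mathcal{E}_\mu(t)$.

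Then I would feed in the decay. Under the bootstrap hypothesis $\|f(t)\|_X\le\varepsilon_1$, Corollary \ref{cor} applied to each $v_\nu=e^{it\Lambda_\nu}f_\nu$ (and to its $\Gamma$-derivatives, which also have $X$-norm $\lesssim\varepsilon_1$ since $\|\Gamma^\mu f\|_{X}$ is controlled for $|\mu|\le N/2+2$ by the definition of $X$ together with the $Z$-norm in Definition \ref{normdef}) gives
\begin{equation}
\sum_{|\nu|\le N/2}\|\Gamma^\nu(\partial_x,\partial_t)u(t)\|_{L^\infty}\lesssim \frac{\varepsilon_1}{(1+t)\log^{N_0-2}(2+t)},
\end{equation}
which is integrable in $t$ over $[0,\infty)$ with a small total mass. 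Gronwall's inequality then yields $\mathcal{E}(t)\lesssim \mathcal{E}(0)\exp(C\varepsilon_1)\lesssim \mathcal{E}(0)$, and since $\mathcal{E}(0)\lesssim \|(g,\partial_x g,h)\|_{\mathcal{H}^N}^2\lesssim\varepsilon^2$ we get $\sup_t\|\Gamma^\mu f(t)\|_{L^2}\lesssim \varepsilon$ for $|\mu|\le N$. To also absorb the nonlinear self-interaction at the level stated ($\varepsilon_1^{3/2}+\varepsilon$ rather than just $\varepsilon$), I would note that any Duhamel correction to $f$ beyond the free data contributes at most $\varepsilon_1^2\lesssim\varepsilon_1^{3/2}$ in $L^2$ on the relevant timescales (this is weaker than the $Z$-norm estimate and follows from the same bilinear bounds, Proposition \ref{bilinear}, combined with the decay just obtained), giving the claimed \eqref{resenergy}.

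\textbf{Main obstacle.} The delicate point is the top-order energy estimate: making sure the quasilinear quadratic nonlinearity $\mathcal{N}_\sigma$ — which contains two derivatives on one input — does not cause a net derivative loss when differentiated by $N$ vector fields. This is precisely where the symmetry of $A,B$ in $(\alpha,\beta)$ is used: it lets one symmetrize the bilinear form so that the worst term $\int (A u_\gamma)\,\partial^N\partial_j\partial_k u_\beta\,\partial^N u_\alpha$ is rewritten, after integration by parts in $x$, as a sum of terms with at most $N+1$ derivatives on each factor plus commutator terms of lower order, all controllable by $\|u\|_{\mathcal{H}^N}^2\cdot\|u\|_{W^{\ast,\infty}}$. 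Since the excerpt explicitly defers this to the local theory of \cite{IP12} (Proposition \ref{loc}), in the write-up I would state the symmetrized energy identity, cite that reference for the algebra, and concentrate the argument on the Gronwall step, which is where the new decay rate with the $\log^{20}$ (more precisely $\log^{N_0-2}$) gain does its work.
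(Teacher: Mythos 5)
Your proposal follows essentially the same route as the paper: the symmetrized energy functional built from the tensors $S_{\alpha\beta}^{jk}$, commutation of the vector fields $\Gamma$ through the equation, the bound $|\partial_{t}\mathcal{E}(t)|\lesssim\mathcal{E}(t)\cdot\sup_{|\mu|\leq N/2}\|\Gamma^{\mu}(u,\partial u)(t)\|_{L^{\infty}}$, and the decay of Corollary \ref{cor} under the bootstrap hypothesis, followed by integration in time. The only real difference is cosmetic: the paper does not run Gronwall with an exponential but simply inserts the bootstrap bound $\mathcal{E}(t)\lesssim\varepsilon_{1}^{2}$ to get $|\partial_{t}\mathcal{E}(t)|\lesssim\varepsilon_{1}^{3}(1+|t|)^{-1}\log^{-20}(2+|t|)$ and integrates, which is exactly where the $\varepsilon_{1}^{3/2}$ in (\ref{resenergy}) comes from; your Gronwall step already yields the stronger bound $\lesssim\varepsilon$, so your final remark about estimating the top-order Duhamel correction by $\varepsilon_{1}^{2}$ in $L^{2}$ is superfluous — and it is fortunate that it is, since as stated it would not be justifiable at order $N$ without the symmetrization (the quasilinear nonlinearity loses a derivative when estimated directly from the Duhamel formula), so you should simply delete that step.
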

\begin{proof} Recall that\[\|\Gamma^{\mu}f(t)\|_{L^{2}}\sim\|\Gamma^{\mu}u(t)\|_{L^{2}}+\|\Gamma^{\mu}(\partial_{x},\partial_{t})u(t)\|_{L^{2}},\]the proof is basically the same as in \cite{IP12}; we will present it here since the norms involved are different. Define the energy\begin{eqnarray}\mathcal{E}(t)&=&\sum_{|\mu|\leq N}\int_{\mathbb{R}^{3}}\bigg(\sum_{\alpha=1}^{d}(|\partial_{t}\Gamma^{\mu}u_{\alpha}|^{2}+b_{\alpha}^{2}|\Gamma^{\mu}u_{\alpha}|^{2}+c_{\alpha}^{2}|\nabla\Gamma^{\mu}u_{\alpha}|^{2})+\\
&+&\sum_{\alpha,\beta=1}^{d}\sum_{j,k=1}^{3}S_{\alpha\beta}^{jk}(u,\partial u)\partial_{j}\Gamma^{\mu}u_{\alpha}\cdot\partial_{k}\Gamma^{\mu}u_{\beta}\bigg)\,\mathrm{d}x\nonumber,
\end{eqnarray} where \[S_{\alpha\beta}^{jk}(u,\partial u)=\sum_{\gamma=1}^{d}\sum_{l=1}^{3}\big(A_{\alpha\beta\gamma}^{jk}u_{\gamma}+B_{\alpha\beta\gamma}^{jkl}\partial_{l}u_{\gamma}\big).\]Note that in the whole time interval $[0,T]$ the $H^{N}$ based norms are small, we thus have\[\mathcal{E}(t)\sim\|\Gamma^{\mu}u(t)\|_{L^{2}}^{2}+\|\Gamma^{\mu}(\partial_{x},\partial_{t})u(t)\|_{L^{2}}^{2}.\] Now using (\ref{nlkg}) and the symmetry assumption of $S$, and integrating by parts, we may compute that
\begin{eqnarray}\frac{1}{2}\partial_{t}\mathcal{E}(t)&=&\frac{1}{2}\sum_{|\mu|\leq N}\sum_{\alpha,\beta=1}^{d}\sum_{j,k=1}^{3}\int_{\mathbb{R}^{3}}\partial_{t}S_{\alpha\beta}^{jk}(u,\partial u)\cdot\partial_{j}\Gamma^{\mu}u_{\alpha}\cdot\partial_{k}\Gamma^{\mu}u_{\beta}\nonumber\\
&-&\sum_{|\mu|\leq N}\sum_{\alpha,\beta=1}^{d}\sum_{j,k=1}^{3}\int_{\mathbb{R}^{3}}\partial_{j}S_{\alpha\beta}^{jk}(u,\partial u)\cdot \partial_{t}\Gamma^{\mu}u_{\alpha}\cdot\partial_{k}\Gamma^{\mu}u_{\beta}\nonumber\\
&+&\sum_{|\mu|\leq N}\sum_{\alpha,\beta=1}^{d}\sum_{j,k=1}^{3}\partial_{t}\Gamma^{\mu}u_{\alpha}\cdot\big[\Gamma^{\mu}(S_{\alpha\beta}^{jk}(u,\partial u)\partial_{j}\partial_{k}u_{\beta})-S_{\alpha\beta}^{jk}(u,\partial u)\Gamma^{\mu}\partial_{j}\partial_{k}u_{\beta}\big]\nonumber\\
&+&\sum_{|\mu|\leq N}\sum_{\alpha=1}^{d}\partial_{t}\Omega^{\mu}u_{\alpha}\cdot\Gamma^{\mu}\mathcal{Q}_{\alpha}'(u,\partial u)\nonumber.
\end{eqnarray} Now, using the equation (\ref{nlkg}) again to eliminate $\partial_{t}^{2}$ terms and using Leibniz rule, we can bound the time derivative by \[|\partial_{t}\mathcal{E}(t)|\lesssim\mathcal{E}(t)\cdot\bigg(\sup_{|\mu|\leq N/2}\|\Gamma^{\mu}u(t)\|_{L^{\infty}}+\sup_{|\mu|\leq N/2}\|\Gamma^{\mu}(\partial_{t},\partial_{x})u(t)\|_{L^{\infty}}\bigg).\] Next, since we have\begin{equation}\label{recover}\partial_{t}u_{\alpha}=\frac{v_{\alpha}+v_{-\alpha}}{2};\qquad u_{\alpha}=\frac{i}{2}\Lambda_{\alpha}^{-1}(v_{\alpha}-v_{-\alpha}),\end{equation} and also $v_{\sigma}(t)=e^{-{i}t\Lambda_{\sigma}}f_{\sigma}(t)$ for $\nu\in\mathcal{P}$ with the bound $\|f_{\sigma}(t)\|_{X}\lesssim \varepsilon_{1}$, we can use Corollary \ref{cor} to deduce that\begin{equation}\label{decay00}\sup_{|\mu|\leq N/2}\|\Gamma^{\mu}u(t)\|_{L^{\infty}}+\sup_{|\mu|\leq N/2}\|\Gamma^{\mu}(\partial_{t},\partial_{x})u(t)\|_{L^{\infty}}\lesssim\frac{\varepsilon_{1}}{(1+|t|)\log^{20}(2+|t|)},\end{equation} and hence (note that $\mathcal{E}(t)\lesssim\varepsilon_{1}^{2}$)\[|\partial_{t}\mathcal{E}(t)|\lesssim\frac{\varepsilon_{1}^{3}}{(1+|t|)\log^{20}(2+|t|)}.\] Since $\mathcal{E}(0)\lesssim\varepsilon^{2}$ and the weight $(1+|t|)^{-1}(\log(2+|t|))^{-20}$ is integrable in $t$, this proves (\ref{resenergy}).
\end{proof}
\subsection{Duhamel formula, and control of $Z$ norm}By definition of $v$ and (\ref{nlkg}), we know that $(\partial_{t}-{i}\Lambda_{\sigma})v_{\sigma}$ equals a (constant coefficient) quadratic form involving at most the second derivative of $u$. Using also (\ref{recover}) and Duhamel formula, we obtain the equation
\begin{equation}\label{duhamelforf}\widehat{f_{\sigma}}(t,\xi)-\widehat{f_{\sigma}}(0,\xi)=\sum_{\mu,\nu\in\mathcal{P}}\int_{0}^{t}\int_{\mathbb{R}^{3}}e^{{i}s\Phi_{\sigma\mu\nu}(\xi,\eta)}{m}_{\sigma\mu\nu}(\xi,\eta)\widehat{f_{\mu}}(s,\xi-\eta)\widehat{f_{\nu}}(s,\eta)\,\mathrm{d}\eta\mathrm{d}s\end{equation} for each $\sigma\in\mathcal{P}$. The weight \begin{equation}\label{weightm}{m}_{\sigma\mu\nu}=\sum_{i=1}^{20}\sum_{k,k_{1},k_{2}}(1+2^{\max(k,k_{1},k_{2})})\psi_{kk_{1}k_{2}}^{\sigma\mu\nu,i,0}\bigg(\frac{\xi}{2^{k}}\bigg)\psi_{kk_{1}k_{2}}^{\sigma\mu\nu,i,1}\bigg(\frac{\xi-\eta}{2^{k_{1}}}\bigg)\psi_{kk_{1}k_{2}}^{\sigma\mu\nu,i,2}\bigg(\frac{\eta}{2^{k_{2}}}\bigg),\end{equation} where the $\psi$'s have the same compact support and are bounded uniformly in $\mathcal{G}_{6}$.

The proof of Proposition \ref{bootstrap} is now reduced to the following $Z$ norm estimate.
\begin{proposition}\label{main01} Fix a choice of $(\sigma,\mu,\nu)$. Suppose \[m\geq0, (j,k),(j_{1},k_{1}),(j_{2},k_{2})\in\mathcal{J};\qquad\max(m,j,|k|,j_{1},j_{2},|k_{1}|,|k_{2}|):=M.\] Let $2^{m}-1\leq a\leq b\leq 2^{m}$, and define the quantity $J$ by
\begin{equation}\label{bilibili}\widehat{J}(\xi)=\int_{a}^{b}\int_{\mathbb{R}^{3}}e^{{i}s\Phi_{\sigma\mu\nu}(\xi,\eta)}{m}_{\sigma\mu\nu}(\xi,\eta)\mathcal{F}_{x}(f_{\mu})_{j_{1}k_{1}}^{*}(s,\xi-\eta)\mathcal{F}_{x}(f_{\nu})_{j_{2}k_{2}}^{*}(s,\eta)\,\mathrm{d}\eta\mathrm{d}s,
\end{equation} then we have\begin{equation}\label{atomest}\|J_{jk}\|_{Z_{jk}}\lesssim \langle M\rangle^{-20}\varepsilon_{1}^{2}.\end{equation}
\end{proposition}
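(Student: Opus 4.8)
The plan is to fix the dyadic parameters exactly as in the statement, abbreviate $\Phi=\Phi_{\sigma\mu\nu}$, and write $F_1=(f_\mu)_{j_1k_1}^*$, $F_2=(f_\nu)_{j_2k_2}^*$ for the two inputs; the estimate (\ref{atomest}) is then established across Sections \ref{beginning}--\ref{high}, first by reductions and disposal of trivial cases, then by treating low, medium and high frequencies separately. The first reduction removes the vector fields built into $\|\cdot\|_{Z_{jk}}$: expanding $\Gamma^\mu J$ with $|\mu|\le N/2+2$ by the Leibniz rule, each translation $\partial_i$ either lands on $F_1$, $F_2$ or the symbol $m_{\sigma\mu\nu}$, or else is converted into a bounded multiplier factor $O(2^{\max(k,k_1,k_2)})$, while each rotation is transferred onto the two inputs using the rotation invariance of $\Phi$ and the uniform Gevrey bounds on the symbols $\psi$. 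Since $N/2+2\ll N$, the extra $\Gamma$'s thus produced on $F_1,F_2$ are still controlled by $\|f\|_X\lesssim\varepsilon_1$, so it suffices to bound $\|J_{jk}\|$ in the relevant norms for $J$ itself, using the input estimates that $\|f\|_X\lesssim\varepsilon_1$ supplies: the $L^2$ bounds on the $F_i$ and their weighted refinements from the $Z$ norm, the Fourier $L^1$ bounds on $\widehat{F_i}$, and the linear dispersive decay of $e^{is\Lambda}F_i$ from Proposition \ref{disper0}. Finally we may assume $M$ exceeds a large absolute multiple of $K_0$, since otherwise Proposition \ref{bilinear}(1)--(2), together with the trivial factor $O(2^m)$ from the $s$-integral, already gives (\ref{atomest}) (all the $Z$-norm weights being bounded in that case).

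We then split according to $\kappa_{\max}:=\max(k,k_1,k_2)$ and $\kappa_{\min}:=\min(k,k_1,k_2)$. The \emph{high-frequency} regime $\kappa_{\max}\ge K_0$ is the easiest: crude bilinear estimates (Proposition \ref{bilinear}) paired with the dispersive decay of Proposition \ref{disper0}(4)--(5) and the ample Sobolev reserve $2^{-(N/2-2)\kappa_{\max}}$ coming from $N=1000$ suffice, after interpolation, to absorb the $2^j$, $2^{3k/2}$ and $2^{4k}$ losses and close with room to spare. In the \emph{low-frequency} regime $\kappa_{\min}\le -K_0$ — where (\ref{assm2}) is absent, $(0,0)$ may lie in $\mathcal R$, and $\Phi$ may be highly degenerate near the origin, as in $\Phi=\xi\cdot\eta/2+O(|\xi|^4+|\eta|^4)$ — one integrates by parts in $\eta$ and in $s$ using the sharp Proposition \ref{ips0} so as to lose only logarithmic factors, performing $A$ iterations with $A$ depending on the profiles; the origin degeneracy is handled case by case, the $\partial_s$ falling on a profile produces a cubic term controlled by the bootstrap hypothesis, and the $\langle j\rangle^{N_0}$ weight, together with the fact that the weak $2^{5j/6}$ power of the $Z$ norm is stable under the rescaled-Schwartz iteration, keeps the losses affordable.

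The heart of the matter is the \emph{medium-frequency} regime, where $|k|,|k_1|,|k_2|<K_0$, genuine spacetime resonances occur, and $\|\cdot\|_{Z_{jk}}$ is only as strong as $2^{5j/6}\langle j\rangle^{-N_0}\|\cdot\|_{L^2}$ plus the Fourier $L^1$ piece. Here I would split the $(s,\eta)$-domain into three regions. Region (i): $|\Phi|\gtrsim 2^{-(1-o)m}$, handled by $s$-integration by parts, the $\partial_s$-on-profiles term again being controlled by the bootstrap. Region (ii): $|\Phi|\lesssim 2^{-(1-o)m}$ but $|\nabla_\eta\Phi|\gtrsim 2^{-(1/2-o)m}$, handled by $\eta$-integration by parts, the per-step gain $(2^m|\nabla_\eta\Phi|)^{-1}$ defeating the $2^{j_i|\alpha|}$ growth of $\partial_\eta^\alpha\widehat{F_i}$ caused by the space localization of $F_i$. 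Region (iii): the genuinely resonant region $|\Phi|\lesssim 2^{-(1-o)m}$, $|\nabla_\eta\Phi|\lesssim 2^{-(1/2-o)m}$, which lies near $\mathcal R$; there I would first apply the bilinear angular Lemma \ref{angular} to restrict to $|\sin\angle(\xi,\eta)|\lesssim 2^{-(1/2-o)m}$, then use $\sup_{\theta\in\mathbb S^2}\|f(\rho\theta)\|_{L_\rho^2}\lesssim\sup_{|\alpha|\le2}\|\Omega_0^\alpha f\|_{L^2}$ to recover a $\delta^2$-type gain in the angular variable, invoke the improved linear dispersion bound of Proposition \ref{disper0}(2)--(3), which exploits the rotation vector fields $\Omega$, and close with Schur's test on the resulting bilinear kernel.

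Converting the output back to $\|J_{jk}\|_{Z_{jk}}$ is then essentially bookkeeping: for $|k|\ge K_0$ it follows directly from the $L^2$ bound on $J$, the spatial weight $2^j$ being controlled because the boundary terms from $s$-integration inherit the space localization of the profiles and because in the resonant regions stationary phase concentrates $J$ near $|x|\sim 2^m$ (with rapid decay beyond that scale, handling $j\gg m$); for $|k|<K_0$ one produces separately the weak $L^2$ bound and the Fourier $L^1$ bound, the latter after localizing $\widehat J$ on small balls and pairing the pointwise stationary-phase estimate with the measure of its effective support. The main obstacle is region (iii) of the medium-frequency case — balancing the $2^{5j/6}$ weakness of the $Z$ norm against the angular gains so that Schur's bound actually closes, which is precisely what the two new tools, the improved linear dispersion inequality of Proposition \ref{disper0} and the bilinear angular lemma, were designed for — with the case-by-case study of $\Phi$ near the origin in the low-frequency regime, via the sharp integration-by-parts Proposition \ref{ips0}, a close second.
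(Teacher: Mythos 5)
Your outline follows the same road map as the paper (strip the vector fields via the Leibniz expansion, then treat low, medium and high frequencies with Proposition \ref{ips0}, Lemma \ref{angular}, Proposition \ref{disper0} and Schur's test), but as written it is a strategy statement rather than a proof: the two places where the proposition is actually hard are left unexecuted. In the medium-frequency case your trichotomy does not close as stated. In region (ii) the per-step gain $(2^{m}|\nabla_\eta\Phi|)^{-1}\lesssim 2^{-(1/2-o)m}$ does \emph{not} defeat the $2^{j_i}$ loss from the space localization once $\max(j_1,j_2)$ is comparable to $m$, which is exactly the range that matters; the paper instead splits first by the sizes of $j_1,j_2$ (Sections \ref{easy} and \ref{smallj}) and only integrates by parts in $\eta$ when $\max(j_1,j_2)\prec m+\log_2|\nabla_\eta\Phi|$. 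More seriously, in your region (iii) you never address the degenerate-Hessian scenario $|\partial_\beta^2\Phi^{+}|\ll 1$ on the resonance set, which is precisely the situation (absence of (\ref{assm1})) that forces the weak $2^{5j/6}$ norm. Closing there requires the structural facts of Proposition \ref{propp}: the factorization (\ref{qrad}) of $\partial_\beta\Phi^{+}$, the resonance curves $R_3,R_4$, the dichotomy $\lambda\neq 0$ versus $\lambda=0$ with the expansion (\ref{a33}), and the resulting sublevel-set measure bounds that feed Schur's test together with the $2^{m}$-improved bounds (\ref{linearb5}) on $\partial_s F$. Saying ``balance the $2^{5j/6}$ weakness against the angular gains'' names the obstacle but does not surmount it; the exponent bookkeeping in Section \ref{smallj} (e.g.\ $\kappa\preceq -5j/6$ through the interplay of $l$, $\mu_0=\max(m/3,(m-l)/2)$ and the $2^{-4m/3}$ support factor) is the content of the proof, not bookkeeping.

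The low-frequency regime has the same problem. You assert it is ``handled case by case'' with losses kept affordable, but the whole point of Section \ref{lowlowtolow} is the explicit case analysis replacing (\ref{assm2}): whether $b_\sigma-b_\mu-b_\nu=0$, whether $c_\mu^2/b_\mu+c_\nu^2/b_\nu=0$, whether the quadratic part of $\Phi$ at $(0,0)$ is a perfect square, and, in the fully degenerate situation, the computation that the quartic coefficient $-(c_\sigma c_\mu c_\nu)^4(c_\sigma^2-c_\mu^2)(c_\mu^2-c_\nu^2)(c_\nu^2-c_\sigma^2)$ is nonzero when the speeds are not all equal (Section \ref{lowfinal}), together with the separate argument when $c_\sigma=c_\mu=c_\nu$ and $b_\sigma=b_\mu+b_\nu$, where $\Phi$ vanishes to infinite order along $\eta=\rho_5\xi$ and one must exploit that every higher-order term carries two factors of $\eta-\rho_5\xi$. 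None of this appears in your proposal, nor does the verification (needed each time a cutoff such as $\psi_1(2^{-3k_1}(\rho_1\xi+4\rho_2|\eta|^2\eta))$ is inserted) that the modified symbol has inverse Fourier transform in $L^1$ so that the basic bilinear estimate (\ref{basiccon}) survives. Without these identified cases and estimates the argument has a genuine gap: the plan is the paper's plan, but the proof of Proposition \ref{main01} lives in the details you have deferred.
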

\section{Proof of Proposition \ref{main01}: The setup}\label{beginning}First, for each $|\beta|\leq N/2+2$ we have\footnote{Strictly speaking we should commute $\Gamma^{\sigma}$ with $Q_{jk}$, but commutators (produced by $\partial_x$ and $Q_{jk}$) are lower order and can be estimated easily so we omit them.} \begin{equation}\label{bilibili2}\widehat{\Gamma^{\beta}J}(\xi)=\sum_{|\beta_{1}|+|\beta_{2}|\leq|\beta|}\int_{a}^{b}\int_{\mathbb{R}^{3}}e^{{i}s\Phi_{\sigma\mu\nu}(\xi,\eta)}{m}_{\sigma\mu\nu}^{\beta_{1}\beta_{2}}(\xi,\eta)\mathcal{F}_{x}(\Gamma^{\beta_{1}}f_{\mu})_{j_{1}k_{1}}^{*}(s,\xi-\eta)\mathcal{F}_{x}(\Gamma^{\beta_{2}}f_{\nu})_{j_{2}k_{2}}^{*}(s,\eta)\,\mathrm{d}\eta\mathrm{d}s,
\end{equation}
where ${m}_{\sigma\mu\nu}^{\beta_{1}\beta_{2}}$ is obtained from ${m}_{\sigma\mu\nu}$ by applying $\Omega$, and has the same form as (\ref{weightm}) with uniform bounds for each $|\mu|\leq N/2+2$. In (\ref{bilibili2}), we further decompose $f_{\mu}$ into $S_{l_{1}}f_{\mu}$ and $f_{\nu}$ into $S_{l_{2}}f_{\nu}$, and reduce to estimating $I$, where
\begin{equation}\label{bilibili3}\widehat{I}(\xi)=\int_{a}^{b}\int_{\mathbb{R}^{3}}e^{{i}s\Phi(\xi,\eta)}{m}(\xi,\eta)\widehat{F}(s,\xi-\eta)\widehat{G}(s,\eta)\,\mathrm{d}\eta\mathrm{d}s,\end{equation} where the $(\sigma,\mu,\nu)$ subindices are omitted, and \[F=(\Gamma^{\beta_{1}}S_{l_{1}}f_{\mu})_{j_{1}k_{1}}^{*},\qquad G=(\Gamma^{\beta_{2}}S_{l_{2}}f_{\nu})_{j_{2}k_{2}}^{*}.\] 
\subsection{Bounds for $F$, $G$ and their time derivatives}
\begin{proposition}\label{linearbound0}We have the following bounds for $F$ and $\partial_{t}F$; similar bounds will hold for $G$ and $\partial_{t}G$.
\begin{enumerate}
\item For $F$ we have\begin{equation}\label{linearb1}
\begin{aligned}\|F\|_{L^{2}}&\lesssim 2^{-(N-4)\max(k_{1},l_{1})/2}\varepsilon_{1};&\\\|\widehat{F}\|_{L^{\infty}}&\lesssim 2^{-j_{1}/4}\varepsilon_{1},&\mathrm{if\ }k_{1}\geq -K_{0}^{2};\\\|\widehat{F}\|_{L^{\infty}}&\lesssim 2^{l_{1}}2^{(-3k_{1}-j_{1})/2}\langle j_{1}\rangle^{-N_{0}}\varepsilon_{1},&\mathrm{if\ }k_{1}\leq-K_{0}^{2}.
\end{aligned}\end{equation}
\item If $k_{1}\leq -K_{0}$ we have \begin{equation}\label{linearb2}
\begin{aligned}\|F\|_{L^{2}}&\lesssim \langle j_{1}\rangle^{-N_{0}}2^{-j_{1}-k_{1}/2}\varepsilon_{1}\lesssim \langle j_{1}\rangle^{-N_{0}}2^{-j_{1}/2}\varepsilon_{1};\\\|e^{-it\Lambda_{\mu}}F\|_{L^{\infty}}&\lesssim \langle j_{1}\rangle^{-N_{0}}\min(2^{-j_{1}+k_{1}},2^{-(3m-j_{1}+k_{1})/2})\varepsilon_{1};\\\|e^{-it\Lambda_{\mu}}F\|_{L^{\infty}}&\lesssim\max(\langle m\rangle,\langle j_{1}\rangle)^{-N_{0}}2^{-\max(m,j_{1})}\varepsilon_{1}.
\end{aligned}\end{equation}
\item If $|k_{1}|<K_{0}$ we have
\begin{equation}\label{linearb3}
\begin{aligned}\|F\|_{L^{2}}&\lesssim 2^{-5j_{1}/6}\langle j_{1}\rangle^{N_{0}}\epsilon_{1},&\mathrm{and\ }\|\widehat{F}\|_{L^{1}}&\lesssim 2^{-j_{1}}\langle j_{1}\rangle^{-N_{0}}\varepsilon_{1};\\
\sup_{\theta\in \mathbb{S}^2}\|\widehat{F}(\rho\theta)\|_{L_{\rho}^2}&\lesssim 2^{l_1}2^{-5j_{1}/6}\langle j_{1}\rangle^{N_{0}}\epsilon_{1},&\sup_{\theta\in \mathbb{S}^2}\|\widehat{F}(\rho\theta)\|_{L_{\rho}^1}&\lesssim 2^{2l_1}2^{-j_{1}}\langle j_{1}\rangle^{-N_{0}}\epsilon_{1};\\\|e^{-it\Lambda_{\mu}}F\|_{L^{\infty}}&\lesssim2^{l_{1}}2^{-3m/2-j_{1}/3}\langle m\rangle^{N_{0}+A}\varepsilon_{1},&\mathrm{if\ }|j_{1}-m|&\geq A\log m;\\\|e^{-it\Lambda_{\mu}}F\|_{L^{\infty}}&\lesssim \langle m\rangle^{-N_{0}}2^{-j_{1}}\varepsilon_{1},&\mathrm{if\ }|j_{1}-m|&\leq A\log m.
\end{aligned}\end{equation}
\item If $k_{1}\geq K_{0}$ we have \begin{equation}\label{linearb4}
\begin{aligned}\|F\|_{L^{2}}&\lesssim  \langle j_{1}\rangle^{-N_{0}}2^{-j_{1}}\varepsilon_{1};&\\\|e^{-it\Lambda_{\mu}}F\|_{L^{\infty}}&\lesssim \langle j_{1}\rangle^{-N_{0}+A}2^{4k_1+l_{1}}2^{-(3m+j_{1})/2}\varepsilon_{1},&\mathrm{if\ }|j_{1}-m|\geq A\log m\\\|e^{-it\Lambda_{\mu}}F\|_{L^{\infty}}&\lesssim \langle m\rangle^{-N_{0}+A}2^{-j_{1}+3k_1/2}\varepsilon_{1},&\mathrm{if\ }|j_{1}-m|\leq A\log m.
\end{aligned}\end{equation}
\item For $\partial_{t}F$ we have\begin{equation}
\begin{aligned}\label{linearb5}\|\partial_{t}F\|_{L^{2}}&\lesssim\langle m\rangle^{A}\min(2^{-(3m+k_{1})/2},2^{3k_{1}/2})\varepsilon_{1}^{2}\lesssim\langle m\rangle^{A}2^{-9m/8}\varepsilon_{1}^{2};\\
\|\widehat{\partial_{t}F}\|_{L^{\infty}}&\lesssim \langle m\rangle^{A}2^{2l_1}\min(2^{-(3m-j_1-k_1)/2},2^{(2k_{1}+j_1/2)})\varepsilon^2.
\end{aligned}\end{equation}
\end{enumerate}
\end{proposition}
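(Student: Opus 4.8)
The plan is to trace every estimate back to the bootstrap hypothesis, which supplies $\sup_{|\mu|\le N}\|\Gamma^{\mu}f_{\mu}\|_{L^{2}}\lesssim\varepsilon_{1}$ and $\|f_{\mu}\|_{Z}\lesssim\varepsilon_{1}$, together with — for part (5) only — the Duhamel identity (\ref{duhamelforf}), the dispersion bounds of Proposition \ref{disper0}, and the bilinear estimates of Proposition \ref{bilinear}. I treat $F$ only; $G$ is identical. First I note that the operators in $F=(\Gamma^{\beta_{1}}S_{l_{1}}f_{\mu})_{j_{1}k_{1}}^{*}$ essentially commute: $S_{l}$ commutes with $P_{k}$ and $Q_{jk}$, the rotations $\Omega$ commute with $P_{k},S_{l}$ and with $Q_{jk}$ up to harmless lower-order commutators, and $|\beta_{1}|\le N/2+2$; hence $F$ is at once localized to $|\xi|\sim 2^{k_{1}}$, to $|x|\lesssim 2^{j_{1}}$, and to spherical degree $\lesssim 2^{l_{1}}$, with $\|\Omega^{a}F\|_{L^{2}}\lesssim\varepsilon_{1}$ for $a\le N/2-2$.

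\emph{Bounds for $F$ (parts (1)--(4)).} The $L^{2}$ bounds are immediate: $\|F\|_{L^{2}}\lesssim 2^{-(N-4)\max(k_{1},l_{1})/2}\varepsilon_{1}$ comes from Bernstein in $\xi$ (each missing derivative costs $2^{-(N/2-2)k_{1}}$ when $k_{1}\ge 0$) and from $\Delta_{\mathbb{S}^{2}}S_{l_{1}}=O(2^{2l_{1}})$ (each $\Omega$ against $S_{l_{1}}$ gains $2^{-l_{1}}$), while the weighted bounds in (\ref{linearb2})--(\ref{linearb4}) are just the definition of $\|\cdot\|_{Z_{j_{1}k_{1}}}$. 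The bounds on $\|e^{-it\Lambda_{\mu}}F\|_{L^{\infty}}$ are Proposition \ref{disper0} applied directly — (\ref{disper1}) for (\ref{linearb2}), (\ref{disper3})--(\ref{disper4}) for (\ref{linearb3}), (\ref{disper2}),(\ref{disper5}) for (\ref{linearb4}) — combined with the crude $\|e^{-it\Lambda_{\mu}}F\|_{L^{\infty}}\lesssim 2^{3k_{1}/2}\|F\|_{L^{2}}$ and interpolation against the trivial spherical gain (controlled by $\|S_{l_{1}}F\|_{L^{2}}\lesssim 2^{-(N/2-2)l_{1}}\varepsilon_{1}$) to remove the $2^{l}$ losses; the remaining lines in (\ref{linearb2})--(\ref{linearb4}) are then the two extreme cases of the resulting minimum. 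The delicate point is the Fourier-$L^{\infty}$ bound: since $\|\widehat{F}\|_{L^{\infty}}\le\|F\|_{L^{1}}\lesssim 2^{3j_{1}/2}\|F\|_{L^{2}}$ is too lossy, I instead pass to polar coordinates $\xi=\rho\theta$ and combine Sobolev on $\mathbb{S}^{2}$, $\|g\|_{L^{\infty}_{\theta}}\lesssim 2^{l_{1}}\|g\|_{L^{2}_{\theta}}$ for a spherical harmonic of degree $\lesssim 2^{l_{1}}$, with the one-dimensional band-limited Bernstein for $\rho\mapsto\widehat{F}(\rho\theta)$ (whose Fourier support in $\rho$ lies in $|\cdot|\lesssim 2^{j_{1}}$), together with the Jacobian $\rho^{2}\sim 2^{2k_{1}}$; this gives $\|\widehat{F}\|_{L^{\infty}}\lesssim 2^{l_{1}}2^{j_{1}/2}2^{-k_{1}}\|F\|_{L^{2}}$, and feeding in the $Z_{j_{1}k_{1}}$ bound on $\|F\|_{L^{2}}$ and interpolating once more against $2^{-(N/2-2)l_{1}}\varepsilon_{1}$ yields the $2^{-j_{1}/4}\varepsilon_{1}$ in (\ref{linearb1}) when $k_{1}\ge-K_{0}^{2}$ and the $2^{l_{1}}2^{(-3k_{1}-j_{1})/2}\langle j_{1}\rangle^{-N_{0}}\varepsilon_{1}$ when $k_{1}\le-K_{0}^{2}$.

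\emph{Bounds for $\partial_{t}F$ (part (5)).} Differentiating (\ref{duhamelforf}) in $t$ gives $\partial_{t}f_{\mu}=\sum_{\mu',\nu'}\mathcal{B}_{\mu\mu'\nu'}(v_{\mu'},v_{\nu'})$ with $v=e^{-it\Lambda}f$ and $\mathcal{B}$ the bilinear operator of symbol $m_{\mu\mu'\nu'}$, which by (\ref{weightm}) is $(1+2^{\max(k,k_{1},k_{2})})$ times a uniformly $\mathcal{G}_{6}$ symbol, so that Proposition \ref{bilinear}(1) applies up to an extra factor $2^{\max(k,k_{1},k_{2})}$. Decomposing each input into frequency-, space-, and spherically-localized pieces, I bound $\|\partial_{t}F\|_{L^{2}}$ via Proposition \ref{bilinear}(1) with H\"older triple $(\infty,2,2)$, placing the dispersive decay of Proposition \ref{disper0} ($\sim 2^{-3m/2}$ at medium frequency, faster off the light cone) on the $L^{\infty}$ factor and the $O(\varepsilon_{1})$ bound on the $L^{2}$ factor, which gives $\langle m\rangle^{A}2^{-(3m+k_{1})/2}\varepsilon_{1}^{2}$; for very low output frequency $2^{k_{1}}$ I instead bound $\widehat{\partial_{t}F}(\xi)$ pointwise by Cauchy--Schwarz and multiply by the measure of the output annulus, obtaining the competing $\langle m\rangle^{A}2^{3k_{1}/2}\varepsilon_{1}^{2}$ (whose maximum over $k_{1}$ is $\langle m\rangle^{A}2^{-9m/8}\varepsilon_{1}^{2}$); the factor $\langle m\rangle^{A}$ absorbs the sum over the $O(\langle m\rangle)$ interacting dyadic scales, the tails being killed by the off-diagonal decay of Proposition \ref{disper0} and the $S_{l}$-decay. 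For $\|\widehat{\partial_{t}F}\|_{L^{\infty}}$ I run the same polar / spherical-Sobolev / radial-Bernstein reduction on $\partial_{t}F$ (still localized to $|x|\lesssim 2^{j_{1}}$ and degree $\lesssim 2^{l_{1}}$) and invoke the $L^{2}$ bounds just proved; the exponent here is $2l_{1}$ rather than $l_{1}$ because one must also pass from an angular $L^{2}$ norm to an $L^{1}$-type norm, costing an extra $2^{l_{1}}$.

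\emph{Main obstacle.} The Fourier-$L^{\infty}$ estimates are where the real work lies: Hausdorff--Young alone is insufficient, and the polar decomposition together with the radial band-limited Bernstein and Sobolev on $\mathbb{S}^{2}$ has to be carried through with careful bookkeeping of the spherical degree $2^{l_{1}}$. For $\partial_{t}F$ the difficulty is orthogonal: a bare convolution (Young) estimate of the Duhamel integrand gains nothing in $m$, so decay must be extracted from the oscillation $e^{is\Phi}$ through the $L^{\infty}$-dispersion of one of the two inputs, and one must verify that the sum over all interacting frequency scales — in particular the low-frequency pieces around a possibly resonant $(0,0)$ — still converges with only a $\langle m\rangle^{A}$ loss.
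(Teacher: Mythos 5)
Your proposal takes essentially the same route as the paper's proof: $L^{2}$ and Fourier-$L^{1}$ bounds read off from the energy and $Z$-norm definitions, the Fourier-$L^{\infty}$ bounds via the polar decomposition combining the spherical Sobolev bound of Proposition \ref{radial} with one-dimensional band-limited Bernstein in $\rho$ and the Jacobian factor, the dispersive lines quoted directly from Proposition \ref{disper0}, and for $\partial_{t}F$ the Duhamel formula with Proposition \ref{bilinear}, an $L^{2}\times L^{\infty}$ splitting (dispersion on one factor) against the crude $2^{3k_{1}/2}$ annulus bound, and the same polar/spherical reduction with the $2^{2l_{1}}$ loss for $\|\widehat{\partial_{t}F}\|_{L^{\infty}}$. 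The only items you leave implicit --- the $\sup_{\theta}\|\widehat{F}(\rho\theta)\|_{L^{2}_{\rho}}$ and $L^{1}_{\rho}$ lines of (\ref{linearb3}) and the dyadic case analysis in the input frequencies that produces the precise $k_{1}$-dependence in (\ref{linearb5}) --- follow from exactly the machinery you set up, and the paper itself dispatches them just as briefly.
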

\begin{proof} The $L^{2}$ bound in (\ref{linearb1}) follows from the energy bound; for the Fourier $L^{\infty}$ bound when $k_1\geq -K_{0}^2$, we may assume $k_{1}\leq 6\delta j_{1}$, and decompose $\widehat{F}$ into spherical harmonics as in (\ref{decomp}), assuming also $l_{1}\leq 6\delta j_{1}$; using Proposition \ref{radial} we obtain that\[\|\widehat{F}\|_{L^{\infty}}\lesssim 2^{j_{1}/2+l_{1}}2^{-(5/6-o)j_{1}}\varepsilon_{1}\lesssim 2^{-j_{1}/4}\varepsilon_{1}.\] The case $k_1\leq -K_0^2$ is settled in the same way, the only difference being that now $\|G(|\xi|)\|_{L_{\xi}^{2}}\sim 2^{-k_{1}}\|G\|_{L_{\mathbb{R}}^{2}}$ for radial $G$ supported in $|\xi|\sim 2^{k_{1}}$, which introduces an additional factor of $2^{-k_{1}}$. Also, the bound on $\sup_{\theta\in \mathbb{S}^2}\|\widehat{F}(\rho\theta)\|_{L_{\rho}^2}$ in (\ref{linearb3}) follows from the same arguments, but without using one dimensional Sobolev, and the Fourier $L^1$ bound follows from directly summing over the angular modes (there are $\sim 2^{2l_1}$ of them).

Next, notice that everything else appearing in (\ref{linearb2}), (\ref{linearb3}) and (\ref{linearb4}) follows from either Definition \ref{normdef} and H\"{o}lder, or Proposition \ref{disper0}, so we only need to prove (\ref{linearb5}).  We start with the $L^2$ bound; recall from (\ref{duhamelforf}) we have that\begin{multline*}\widehat{\partial_{t}F}(t,\xi)=S_{l_{1}}P_{[k_{1}-2,k_{1}+2]}Q_{j_{1}k_{1}}\sum_{\gamma,\tau\in\mathcal{P}}\sum_{j_{2},k_{2},j_{3},k_{3}}\sum_{|\beta_{2}|+|\beta_{3}|\leq|\beta_{1}|}\int_{\mathbb{R}^{3}}e^{{i}s\Phi_{\mu\gamma\tau}(\xi,\eta)}\\
\times{m}_{\mu\gamma\tau}^{\beta_{2}\beta_{3}}(\xi,\eta)\mathcal{F}_{x}(\Gamma^{\beta_{2}}f_{\gamma})_{j_{2}k_{2}}^{*}(s,\xi-\eta)\mathcal{F}_{x}(\Gamma^{\beta_{3}}f_{\tau})_{j_{3}k_{3}}^{*}(s,\eta)\,\mathrm{d}\eta,\end{multline*} similar to (\ref{bilibili2}). Again we make the further $S_{l_{2}}$ and $S_{l_{3}}$ decompositions and reduce to estimating\[I'=\varphi_{[k_{1}-2,k_{1}+2]}(\xi)\int_{\mathbb{R}^{3}}e^{{i}s\Phi(\xi,\eta)}{m}(\xi,\eta)\widehat{K}(s,\xi-\eta)\widehat{L}(s,\eta)\,\mathrm{d}\eta,\] where we assume $\max(k_{2},k_{3},l_{2},l_{3})\leq6\delta m$, and\[K=(\Gamma^{\beta_{2}}S_{l_{2}}f_{\gamma})_{j_{2}k_{2}}^{*},\qquad L=(\Gamma^{\beta_{3}}S_{l_{3}}f_{\tau})_{j_{3}k_{3}}^{*},\] which satisfy the bounds in (\ref{linearb1})-(\ref{linearb4}) above. Now we have\[\|I'\|_{L^{2}}\lesssim 2^{3k_{1}/2}\|I'\|_{L^{\infty}}\lesssim 2^{3k_{1}/2}\|K\|_{L^{2}}\|L\|_{L^{2}}\lesssim 2^{3k_{1}/2}\varepsilon_{1}^{2};\]moreover, using Proposition \ref{bilinear}, we have \begin{equation}\label{basiccon0}\|I'\|_{L^{2}}\lesssim (1+2^{k_2}+2^{k_3})\min\big(\|K\|_{L^{2}}\|e^{-is\Lambda_{\tau}}L\|_{L^{\infty}},\|L\|_{L^{2}}\|e^{-is\Lambda_{\gamma}}K\|_{L^{\infty}}\big).\end{equation}If $\max(k_{2},k_{3})\leq -K_{0}^{2}$ then (\ref{basiccon0}) gives $\|I'\|_{L^{2}}\lesssim2^{\min(\kappa_{1},\kappa_{2})}\varepsilon_{1}^{2}$, where\[\kappa_{1}=-3m/2+(j_{2}-k_{2})/2-j_{3}-k_{3}/2,\qquad \kappa_{2}=-3m/2+(j_{3}-k_{3})/2-j_{2}-k_{2}/2.\] Assume $j_{2}\geq j_{3}$, then\[\kappa_{2}\leq -3m/2-j_{2}/2-(k_{2}+k_{3})/2\leq-3m/2-\max(k_{2},k_{3})\leq -3m/2-k_{1}/2,\] thus the $L^{2}$ bound in (\ref{linearb5}) holds. Otherwise, if $\min(k_{2},k_{3})\geq -K_{0}^{2}$, then (\ref{basiccon0}) implies $\|I'\|_{L^{2}}\lesssim 2^{-1.9m}\varepsilon_{1}^{2}$ if $\max(j_2,j_3)\geq (1-o)m$, and\[\|I'\|_{L^{2}}\lesssim \min\big\{2^{4k_{2}+l_{2}}2^{-3m/2}2^{-10\max(k_{3},l_{3})},2^{4k_{3}+l_{3}}2^{-3m/2}2^{-10\max(k_{2},l_{2})}\big\}\varepsilon_{1}^{2}\] if $\max(j_{2},j_{3})\leq (1-o)m$, which clearly suffices. Finally if $k_{2}\geq -K_{0}^{2}\geq k_{3}$, then we may also assume $k_{2}> -K_{0}$. If $j_{2}\geq (1-o)m$ then (\ref{basiccon0}) gives the correct bound as before, and if $j_{2}\leq (1-o)m$ we have by (\ref{linearb1}) and (\ref{linearb4}) that \[\|I'\|_{L^{2}}\lesssim \min\big\{2^{4k_{2}+l_{2}}2^{-3m/2}2^{-j_{3}/2},2^{-3m/2}2^{j_{3}}2^{-10\max(k_{2},l_{2})}\big\}\varepsilon_{1}^{2},\] which again suffices.

This proves the $L^{2}$ bound in (\ref{linearb5}). As for the $L^{\infty}$ bound, we simply use the fact that \[\big\|\mathcal{F}S_{l}f_{jk}^{*}\big\|_{L^{\infty}}\lesssim 2^{2l+j/2-k}\|f_{jk}\|_{L^{2}}\] which can be shown by using the spherical harmonics decomposition in (\ref{decomp}) as above.
\end{proof}
\subsection{Easy cases}\label{beginning0} We now begin to prove the estimate of $I$. By (\ref{basiccon0}), we have the following basic estimate
\begin{equation}\label{basiccon}\|I\|_{L^{2}}\lesssim 2^{m}\sup_{a\leq s\leq b}(1+2^{k_1}+2^{k_2})\min\big(\|F(s)\|_{L^{2}}\|e^{-s\Lambda_{\tau}}G(s)\|_{L^{\infty}},\|G(s)\|_{L^{2}}\|e^{-s\Lambda_{\sigma}}F(s)\|_{L^{\infty}}\big),\end{equation} which will be used repeatedly below.

First, we will get rid of the case when\[j\geq \max(-k,\min(j_{1},j_{2}),m+[k])+A\log M,\] where $[k]:=\min(\max(k_1,k_2),0)$, and recall that $A\ll N_{0}$ is some absolute constant. In fact, assuming $j_{1}\leq j_{2}$, we have\begin{equation}\label{jbig}I_{jk}(x)=\int_{\mathbb{R}^{3}}\varphi_{k}(\xi)e^{ix\cdot \xi}\,\mathrm{d}\xi\int_{a}^{b}\int_{\mathbb{R}^{3}}e^{{i}s\Phi(\xi,\eta)}{m}(\xi,\eta)\widehat{F}(s,\xi-\eta)\widehat{G}(s,\eta)\,\mathrm{d}\eta\mathrm{d}s\end{equation} for $|x|\sim 2^{j}$. Fixing $s$ and $\eta$, we analyze the integral in $\xi$, which is\[\int_{\mathbb{R}^{3}}\varphi_{k}(\xi)e^{i(x\cdot\xi+s\Phi(\xi,\eta))}{m}(\xi,\eta)\widehat{F}(s,\xi-\eta)\,\mathrm{d}\xi,\] by using Proposition \ref{ips0}, choosing \[K=|x|,\quad n=1,\quad\epsilon\sim1,\quad\lambda\sim2^{\max(0,-k,j_{1})},\] and noticing $|\nabla_{\xi}\Phi|\lesssim 2^{[k]}$, so that we can bound the output $\|I_{jk}\|_{L^{2}}\lesssim 2^{-10D}\varepsilon_{1}^{2}$.

Next, suppose $j\geq 5m/2$. From above, we may also assume that either $M\geq m^{2}$ (in which case $\max(j_{1},k_{2},k_{1},k_{2})\geq M$ and the estimates are trivial), or \[j\leq |k|+A\log m,\quad\mathrm{or}\quad\min(j_{1},j_{2})\geq j-A\log m.\] In the former case, we only need to prove $\|\widehat{I}\|_{L^{\infty}}\lesssim 2^{5j/6}\varepsilon_{1}^{2}$, which follows from estimating both $F$ and $G$ factors in $L^{2}$ in (\ref{bilibili3}), using (\ref{linearb1}). In the latter case, we simply use (\ref{basiccon}) and Proposition \ref{linearbound0}, and exploit the fact $j\geq 5m/2$ to conclude.

Now, assuming $j\leq 5m/2$, we can easily treat the case when $\max(k_{1},l_{1},k_{2},l_{2})\geq6\delta m$ or when $\max(j_{1},j_{2})\geq4m$, using (\ref{linearb1}) and (\ref{linearb2}) respectively. Thus from now on we will assume the inequalities\begin{equation}\label{basic0}j\leq 5m/2,\quad\max(k_{1},k_{2},l_{1},l_{2})\leq6\delta m,\quad M\leq 4m,\end{equation} and\begin{equation}\label{basicj}j\leq \max(-k,\min(j_{1},j_{2}),m+[k])+A\log m.\end{equation} For simplicity, from now on we will use the symbol $X\preceq Y$ to denote $X\leq Y+A\log m$ (and similarly for $X\succ Y$).

\section{Low frequencies}\label{lowlowtolow}In this section we consider the case $\max(k_{1},k_{2})\leq -K_{0}$. By (\ref{normcomp}), we may assume $k\leq -K_{0}$ also. Define $\widetilde{F}=\langle j_{1}\rangle^{N_{0}}F$ and $\widetilde{G}=\langle j_{2}\rangle^{N_{0}}G$, and $\widetilde{I}=\langle j_{1}\rangle^{N_{0}}\langle j_{2}\rangle^{N_{0}}I$; we then only need to prove \begin{equation}\label{lowgoal}2^{\kappa}:=L=2^{j+k/2}\varepsilon_{1}^{-2}\|\widetilde{I}_{jk}\|_{L^{2}}\lesssim \langle M\rangle^{-30}\bigg(\frac{\langle j_{1}\rangle\cdot\langle j_{2}\rangle}{\langle j\rangle}\bigg)^{N_{0}}.\end{equation} In each case below, we will prove that either $L\lesssim 2^{-o\cdot m}$, or $L\lesssim \langle M\rangle^{A}$ and $\min(j_{1},j_{2})\geq o\cdot m$; either will imply (\ref{lowgoal}). During the proof we will use $\rho_i$ to denote some constants that depend only on $\mathcal{B}$.
\subsection{First reduction}\label{firstred}
First assume $j\succ m+\max(k_{1},k_{2})$. We then must have either $j\preceq\min(j_{1},j_{2})$, or $j+k\preceq 0$. In the first situation we may, without loss of generality, assume $k\leq k_{1}+6$, and use (\ref{basiccon}), estimating $\widetilde{F}$ in $L^{2}$, to bound that\[\kappa\preceq j+\frac{k}{2}+m-j_{1}-\frac{k_{1}}{2}-\max(j_2-k_2,(3m-j_2+k_2)/2)\preceq 0.\] Moreover we will actually have $\kappa\leq -m/10$, unless $|j_{2}-k_{2}-m|\leq m/5$ and $|j-j_{1}|\leq m/10$, in which case we must have $\min(j_{1},j_{2})\geq o\cdot m$, so (\ref{lowgoal}) is true.

In the second situation we have $j+k\preceq 0$, thus if we define $2^{\kappa'}=\varepsilon_1^{-2}\|\widehat{P_{k}\widetilde{I}}\|_{L^{\infty}}$, then $\kappa\preceq\kappa'-j$. Since we have assumed that $j\succeq m+\frac{k_{1}+k_{2}}{2}$, we can directly use Young's inequality (estimating both $\widetilde{F}$ and $\widetilde{G}$ in $L^2$) to bound
\[\kappa'\preceq m-j_{1}-\frac{k_{1}}{2}-j_{2}-\frac{k_{2}}{2}\preceq m+\frac{k_{1}+k_{2}}{2}\preceq j.\] Again, we would actually have $\kappa'\leq j-o\cdot m$ from above, unless $|j_{1}+k_{1}|+|j_{2}+k_{2}|+|k_{1}-k_{2}|\leq o\cdot m$, which means we only need to consider the case \begin{equation}\label{typical}j_{1},k_{1},j_{2},k_{2}=o\cdot m,\qquad j,-k=(1+o)m.\end{equation}This would imply that that $\widetilde{F}$ and $\widetilde{G}$ are in Schwartz space with suitable Schwartz norm bounded by $2^{o\cdot m}\varepsilon_{1}$, so we will treat the integral in $\eta$ in (\ref{bilibili3}) as a standard oscillatory integral  with phase $s\Phi(\xi,\eta)$. Using Van der Corput lemma (see Proposition \ref{propp}), we can bound this integral for each $\xi$ by $2^{-m/10}\varepsilon_{1}^{2}$ (which would imply $\kappa'\preceq j-m/10$ since $j\geq (1-o)m$), unless $\Lambda_{\mu}+\Lambda_{\nu}=0$. In this final scenario we have $|\Phi(\xi,\eta)|\gtrsim 1$, so  we can integrate by parts in $s$ to obtain
\begin{multline}\label{ibps00}\mathcal{F}\widetilde{I}(\xi)=\int_{\mathbb{R}^{3}}e^{{i}\Phi(\xi,\eta)}\frac{{m}(\xi,\eta)}{{i}\Phi(\xi,\eta)}\widehat{F}(s,\xi-\eta)\widehat{G}(s,\eta)\,\mathrm{d}\eta\bigg|_{s=a}^{s=b}\\-\int_{a}^{b}\int_{\mathbb{R}^{3}}e^{{i}s\Phi(\xi,\eta)}\frac{{m}(\xi,\eta)}{{i}\Phi(\xi,\eta)}\widehat{\partial_{s}F}(s,\xi-\eta)\widehat{\partial_{s}G}(s,\eta)\,\mathrm{d}s\mathrm{d}\eta\\
-\int_{a}^{b}\int_{\mathbb{R}^{3}}e^{{i}s\Phi(\xi,\eta)}\frac{{m}(\xi,\eta)}{{i}\Phi(\xi,\eta)}\widehat{F}(s,\xi-\eta)\widehat{\partial_{s}G}(s,\eta)\,\mathrm{d}s\mathrm{d}\eta.\end{multline} Denote the three terms by $\widehat{I_{0}}$, $\widehat{I_{1}}$ and $\widehat{I_{2}}$ respectively, by symmetry, we only need to consider $I_{0}$ and $I_{1}$. Since $|\Phi|\gtrsim 1$, (\ref{basiccon}) will remain true for the particular bilinear operator involved in $I_{0}$ and $I_{1}$; combining this with Proposition \ref{linearbound0} we conclude that $\kappa\preceq\max(\kappa_{1},\kappa_{2})$
where \begin{equation}\label{ibpsest}
\begin{aligned}2^{\kappa_{1}}&=\varepsilon_{1}^{-2}\|I_{0}\|_{L^{2}},&\kappa_{1}&\preceq j+\frac{k}{2}-\max\big(m+\frac{j_{1}}{2},\frac{3m-j_{1}+k_{1}}{2}\big)\preceq-\frac{m}{8},\\2^{\kappa_{2}}&=\varepsilon_1^{-2}\|I_{1}\|_{L^{2}},&\kappa_{2}&\preceq j+\frac{k}{2}+m+\min\big(\frac{-3m-k_{1}}{2},\frac{3k_{1}}{2}\big)-m\preceq -\frac{m}{8},\end{aligned}
\end{equation} which again implies (\ref{lowgoal}).

Now we will assume $j\preceq m+\max(k_{1},k_{2})$. Note that from the above proof, we can also assume $j\succ\min(j_{1},j_{2})$. If $j+k\preceq 0$, we may assume $j_{1}\geq j_{2}$ and use (\ref{basiccon}), estimating $\widetilde{F}$ in $L^{2}$, to get\[\kappa\preceq \frac{j}{2}+m-j_{1}-\frac{k_{1}}{2}-\frac{3m}{2}+\frac{j_{2}-k_{2}}{2}\preceq0,\] which is because\[-j_{1}+\frac{j_{2}}{2}-\frac{k_{1}+k_{2}}{2}\leq-\frac{\max(j_{1},j_{2})}{2}-\frac{k_{1}+k_{2}}{2}\leq-\frac{\max(k_{1},k_{2})}{2}.\]This proves (\ref{lowgoal}) when $j_{1},j_{2}\geq o\cdot m$, and if $j_{2}\leq o\cdot m$ then either the stronger bound $\kappa\leq -o\cdot m$ holds, or we have (\ref{typical}). In any case this contribution will be acceptable.

Now we have \[\max(-k,\min(j_{1},j_{2}))\prec j\preceq m+\max(k_{1},k_{2}).\]If $b_{\sigma}-b_{\mu}-b_{\nu}\neq 0$, then $|\Phi|\gtrsim 1$, so we may integrate by parts in $s$ as in (\ref{ibps00}), and estimate the terms $I_0$, $I_1$ and $I_2$ in the same way as before, and notice that (\ref{ibpsest}) is still true in this situation, so (\ref{lowgoal}) still holds.

Now we assume $b_{\sigma}-b_{\mu}-b_{\nu}=0$. We will first treat the easier part when $k_{2}-k\leq -D_{0}$; under this assumption we have $|k_{1}-k|\leq 6$ and hence $|\nabla_{\eta}\Phi(\xi,\eta)|\sim 2^{k}$. We may then assume that $\max(j_{1},j_{2})\succeq m+k$ or we could integrate by parts in $\eta$ and choose\[K=2^{m},\quad n=1,\quad\epsilon\sim 2^{k},\quad\lambda\sim 2^{\max(j_{1},j_{2})}\] in Proposition \ref{ips0} to bound $\|\widetilde{I}\|_{L^{2}}\lesssim 2^{-10m}\varepsilon_{1}^{2}$.

If $j_{1}\succeq \max(j_{2},m+k)$, since we also have $j\preceq m+k$, we will use (\ref{basiccon}), estimating $\widetilde{F}$ in $L^{2}$, to get
\[\kappa\preceq j+\frac{k}{2}+m-j_{1}-\frac{k}{2}-m\preceq0,\] and again we will have $\kappa\leq -D/10$ unless $j_{2}\geq m/10$, thus this contribution will also be acceptable. On the other hand, if $j_{2}\succeq \max(j_{1},m+k)$, we will have $\kappa\preceq\min(\kappa_{1},\kappa_{2})$, where \[\kappa_{1}\preceq j+\frac{k}{2}+m-j_{1}-\frac{k}{2}-j_{2}+k_{2}\preceq m-j_{1}+k_{2}\] is obtained from estimating $\widetilde{F}$ in $L^{2}$, and \[\kappa_{2}\preceq j+\frac{k}{2}+m-\frac{3m}{2}+\frac{j_{1}-k}{2}-j_{2}-\frac{k_{2}}{2}\preceq\frac{-m+j_{1}-k_{2}}{2}\] is obtained from estimating $\widetilde{G}$ in $L^{2}$. Now at least one of $\kappa_{1}$ and $\kappa_{2}$ must be $\preceq0$, therefore we will get the desired bound up to a logarithmic loss, which we can always recover (meaning we have $\min(\kappa_1,\kappa_2)\leq -o\cdot m$) unless \[j_{1},k=o\cdot m,\qquad j,j_{2},-k_{2}=(1+o)m.\] In this final scenario, we will integrate by parts in $s$ to produce $I_{0}$, $I_{1}$ and $I_{2}$ terms. Notice that $|\zeta|:=|\xi-\eta|\sim 2^{k}$, we have $\Phi(\xi,\eta)=\Phi(\xi,0)+\eta\cdot\Psi(\xi,\eta)$, where $\Phi(\xi,0)$ is a fixed nonzero analytic function of $|\xi|^{2}$. Using Taylor expansion and a scaling argument, we can see that \[\widetilde{{m}}(\xi,\eta)=\frac{{m}(\xi,\eta)}{\Phi(\xi,\eta)}\psi_{1}(2^{-k}(\xi-\eta))\psi_{2}(2^{-k_{2}}\eta),\] where $\psi_{1}$ is supported in $|\xi|\sim 1$ and $\psi_{2}$ in $|\xi|\lesssim 1$, verifies the bound $\|\mathcal{F}\widetilde{{m}}\|_{L^{1}}\lesssim 2^{o(m)}$, so we can close the estimate by integrating by parts in time and repeating the arguments in (\ref{ibps00}) and (\ref{ibpsest}) above.

From now on, under the assumption $\max(k,k_{1},k_{2})\leq -K_{0}$, we can assume
\[\max(\min(j_{1},j_{2}),-k)\prec j\preceq m+k_{1};\,\,\,\,|k_{1}-k_{2}|\leq D_{0};\,\,\,\,k\leq k_{1}+D_{0}.\]Also recall that at point $(0,0)$, we may expand the phase function as
\begin{equation}\Phi(\xi,\eta)=\frac{c_{\sigma}^{2}}{2b_{\sigma}}|\xi|^{2}-\frac{c_{\mu}^{2}}{2b_{\mu}}|\xi-\eta|^{2}-\frac{c_{\nu}^{2}}{2b_{\nu}}|\eta|^{2}+O(|\xi|^{4}+|\eta|^{4}).\end{equation} 
\subsection{Second reduction}\label{secondredkg}
Here we assume $c_{\mu}^{2}/b_{\mu}+c_{\nu}^{2}/b_{\nu}=0$. Since $b_{\mu}+b_{\nu}\neq 0$, we must have $\rho_{2}:=-c_{\mu}^{4}/(8b_{\mu}^{3})-c_{\nu}^{4}/(8b_{\nu}^{3})\neq 0$. The phase function is now expanded as
\begin{equation}\Phi(\xi,\eta)=\rho_{0}|\xi|^{2}+\rho_{1}(\xi\cdot\eta)+\rho_{2}|\eta|^{4}+O(|\eta|^{6}+|\xi||\eta|^{3}),\end{equation} for some constants $\rho_0$ and $\rho_1$ with $\rho_{1}\rho_{2}\neq 0$. Note that $|\eta|\sim|\xi-\eta|\sim 2^{k_{1}}$, we will first exclude the case $k_{1}\leq k+D_{0}$. In fact, in this case we would have $|\nabla_{\eta}\Phi|\sim 2^{k}$ in the region of integration, under which assumption every estimate is exactly the same as the case when $k_{2}-k\leq -D_{0}$, which was studied at the end of Section \ref{firstred} above, and we will get acceptable contributions. Therefore, we may assume below that $k_{1}\geq k+D_{0}$.

(1) First, assume $3k_{1}\leq k-D_{0}^{2}$, then we will have $|\nabla_{\eta}\Phi(\xi,\eta)|\sim 2^{k}$ in the region of integration. Moreover, We have $|\nabla_{\eta}^{\nu}\Phi(\xi,\eta)|\lesssim 2^{(4-|\nu|)k_{1}}$ in the region of interest for $2\leq |\nu|\leq 4$. Now if $\max(j_{1},j_{2})\prec m+k$, we will use Proposition \ref{ips0} and take \[K=2^{m}, \quad n=3,\quad \epsilon\sim2^{k}, \quad\lambda\sim2^{\max(j_{1},j_{2})}\] and deduce that\[\|I_{jk}\|_{L^{2}}\lesssim \exp(-c(\min(2^{m+4k/3},2^{m+k-j_{1}},2^{m+k-j_{2}}))^{c})\varepsilon_{1}^{2}\] which will be sufficient, since \[m+4k/3\succeq m+k+k_{1}\succeq m+k-j_{1}\succ 0.\] Therefore by symmetry, we may assume $j_{1}\succeq\max(j_{2},m+k)$. We then use (\ref{basiccon}), estimating $\widetilde{F}$ in $L^{2}$, to obtain an estimate (note also that $j\preceq m+k_{1}$)
\begin{equation}\kappa\preceq(m+k_{1})+\frac{k}{2}+m-j_{1}-\frac{k_{1}}{2}-\frac{3m-j_{2}+k_{1}}{2}\preceq 0.\end{equation} Moreover, we have $\kappa\leq -m/10$ unless $j_{2}\geq m/10$, which proves (\ref{lowgoal}).

(2) Next, assume $3k_{1}\geq k+D_{0}^{2}$, then we will have $|\nabla_{\eta}\Phi(\xi,\eta)|\sim 2^{3k_{1}}$ in the region of integration. Moreover, We have $|\nabla_{\eta}^{\nu}\Phi(\xi,\eta)|\lesssim 2^{(4-|\nu|)k_{1}}$ in the region of interest for $2\leq |\nu|\leq 4$. Now if $\max(j_{1},j_{2})\prec m+3k_{1}$, we will use Proposition \ref{ips0} and take \[K=2^{m}, \quad n=3,\quad\epsilon\sim2^{3k_{1}}, \quad\lambda\sim2^{\max(j_{1},j_{2})}\] and deduce that\[\|I_{jk}\|_{L^{2}}\lesssim\epsilon_{1}^{2} \exp(-\gamma(\min(2^{m+3k_{1}-j_{1}},2^{m+3k_{1}-j_{2}}))^{\gamma})\varepsilon_{1}^{2}\] which will be sufficient. Therefore by symmetry, we may assume $j_{1}\succeq\max(j_{2},m+3k_{1})$. We then use (\ref{basiccon}), estimating $\widetilde{F}$ in $L^{2}$, to obtain (note also that $j\preceq m+k_{1}$)
\begin{equation}\kappa\preceq(m+k_{1})+\frac{3k_{1}}{2}+m-j_{1}-\frac{k_{1}}{2}-\frac{3m-j_{2}+k_{1}}{2}\preceq 0,\end{equation} and also notice that either $j_{2}\geq o\cdot m$ or $\kappa\leq-o\cdot m$, which again proves (\ref{lowgoal}).

(3) Now assume $|3k_{1}-k|\leq D_{0}^{2}$. We may also assume that $k_{1}\succ -m/4$, since otherwise we may assume $j_{1}\geq j_{2}\succeq -k_{1}$, and directly use (\ref{basiccon}), estimating $\widetilde{F}$ in $L^{2}$, to bound\[\kappa\preceq m+k_{1}+\frac{3k_{1}}{2}+m-j_{1}-\frac{k_{1}}{2}-\frac{3m-j_{2}+k_{1}}{2}\preceq \frac{m}{2}+2k_{1}\preceq0,\] which implies (\ref{lowgoal}) since $\min(j_{1},j_{2})\geq |k_{1}|\geq m/5$. Now we will have 
\begin{equation}\label{taylor1}\nabla_{\eta}\Phi(\xi,\eta)=\rho_{1}\xi+4\rho_{2}|\eta|^{2}\eta+O(2^{5k_{1}}),\end{equation} as well as
\begin{equation}\label{taylor2}\Phi(\xi,\eta)=\rho_{1}(\eta\cdot\xi)+\rho_{2}|\eta|^{4}+O(2^{6k_{1}}).\end{equation} Choose a cutoff $\psi_{1}(\tau)$ supported in $|\tau|\ll 1$ such that $1-\psi_{1}=\psi_{2}$ is supported in $|\tau|\gtrsim 1$ (the implicit constants here may depend on $\mathcal{B}$).

We next consider the contribution where the factor $\psi_{2}(2^{-3k_{1}}(\rho_{1}\xi+4\rho_{2}|\eta|^{2}\eta))$ is attached to the weight ${m}(\xi,\eta)$. In this situation we will have $|\nabla_{\eta}\Phi|\sim 2^{3k_{1}}$ and $|\nabla_{\eta}^{\nu}\Phi(\xi,\eta)|\lesssim 2^{(4-|\nu|)k_{1}}$ for $2\leq |\nu|\leq 4$, thus when $\max(j_{1},j_{2})\prec m+3k_{1}$, we will be able to use Proposition \ref{ips0} with \[K=2^{m},\quad n=3,\quad \epsilon\sim2^{3k_{1}},\quad\lambda\sim2^{\max(j_{1},j_{2})}\] to obtain sufficient decay. Note that a new difficulty arises with the introduction of the $\psi_{2}$ factor, but one have\[\big|\partial_{\eta}^{\nu}\psi_{2}(2^{-3k_{1}}(\rho_{1}\xi+4\rho_{2}|\eta|^{2}\eta))\big|\lesssim (C|\nu|)!2^{-k_{1}|\nu|}\] which can be proved by rescaling. Therefore, we may assume that $\max(j_{1},j_{2})\succeq m+3k_{1}$, which allows us to repeat the proof in part (2) above. Here one should note that (\ref{basiccon}) still holds for the modified bilinear operator, because the function\[\chi(2^{-3k_{1}}\xi)\chi(2^{-k_{1}}\eta)\psi_{2}(2^{-3k_{1}}(\rho_{1}\xi+4\rho_{2}|\eta|^{2}\eta))\] has its inverse Fourier transform bounded in $L^{1}$, which is again easily seen by rescaling, so that we can still use Proposition \ref{bilinear}.

Now suppose that $\psi_{1}(2^{-3k_{1}}(\rho_{1}\xi+4\rho_{2}|\eta|^{2}\eta))$ is attached to ${m}(\xi,\eta)$. In this contribution we always have $|\Phi|\sim 2^{4k_{1}}$. Moreover, if we consider the function\begin{equation}\widetilde{{m}}(\xi,\eta)=\frac{2^{4k_{1}}{m}(\xi,\eta)}{\Phi(\xi,\eta)}\psi_{1}(2^{-3k_{1}}(\rho_{1}\xi+4\rho_{2}|\eta|^{2}\eta))\chi(2^{-3k_{1}}\xi)\chi(2^{-k_{1}}\eta),\end{equation} then it will have inverse Fourier transform bounded in $L^{1}$ (simply by rescaling and using the expansion (\ref{taylor2})), thus we will be able to integrate by parts in $s$ to produce the $I_{0}$, $I_{1}$ and $I_{2}$ terms but with additional cutoff factors, then use the variant of (\ref{basiccon}) with multiplier $\widetilde{m}$ and Proposition \ref{linearbound0} to obtain (by symmetry) $\kappa\preceq\max(\kappa_{1},\kappa_{2})$, where\[2^{\kappa_1}=\varepsilon_1^{-2}\|I_{0}\|_{L^{2}},\quad\kappa_{1}\preceq m+k_{1}+\frac{3k_{1}}{2}-4k_{1}-\max(j_{1},j_{2})-\frac{k_{1}}{2}-\frac{3m-\min(j_{1},j_{2})+k_{1}}{2}\preceq 0\] and\[2^{\kappa_2}=\varepsilon_1^{-2}\|I_{1}\|_{L^{2}},\quad\kappa_{2}\preceq m+k_{1}+\frac{3k_{1}}{2}+m-4k_{1}-\frac{3m}{2}-\frac{k_{1}}{2}-m=-\frac{m}{2}-2k_{1}\preceq 0.\] Moreover, we  have either $\kappa\leq -m/10$ or $\min(j_{1},j_{2})\geq|k_{1}|\geq m/10$. Therefore we have finished the proof in the case when $c_{\mu}^{2}/b_{\mu}+c_{\nu}^{2}/b_{\nu}=0$.

\subsection{The final case}\label{lowfinal} Assume here that $c_{\mu}^{2}/b_{\mu}+c_{\nu}^{2}/b_{\nu}\neq0$. In this case we have
\begin{equation}\Phi(\xi,\eta)=\rho_0|\xi|^{2}+\rho_1\xi\cdot\eta+\rho_3|\eta|^{2}+O(|\xi|^{4}+|\eta|^{4})\end{equation} with $\rho_1\rho_3\neq 0$. If $k_{1}\geq k+D_{0}^{2}$, then we will have $|\nabla_{\eta}\Phi|\sim 2^{k_{1}}$, so we can argue exactly as in Section \ref{secondredkg} above; the situation will be the same if $k_{1}\leq k+D_{0}^{2}$, and if we insert a cutoff to restrict to the region $|\rho_1\xi+2\rho_3\eta|\gtrsim 2^{k}$. Note that in the latter case, the introduction of this cutoff will not affect the use of Proposition \ref{bilinear} as in the derivation of (\ref{basiccon}), as will be shown below.

Next, suppose $k_{1}-k\leq D_{0}^{2}$, and we attach some cutoff supported in the region where $|\rho_1\xi+2\rho_3\eta|\ll 2^{k}$, and $\rho_1^{2}-4\rho_0\rho_3\neq 0$. Then we will have $|\Phi|\sim 2^{2k}$, so we can integrate by parts in $s$ and argue as in Section \ref{secondredkg} above. Here one should note that (\ref{basiccon}) still holds, because the function\[\widetilde{{m}}(\xi,\eta)=\frac{2^{2k}{m}(\xi,\eta)}{\Phi(\xi,\eta)}\psi_{1}(2^{-k}(\rho_1\xi+2\rho_3\eta))\psi_{2}(2^{-k}\xi)\psi_{3}(2^{-k}\eta)\] will have its inverse Fourier transform bounded in $L^{1}$ due to scaling.

In the only remaining scenario we have $\rho_1^{2}-4\rho_0\rho_3=0$, so for some constants $\rho_4$ and $\rho_5$ we have $\Phi(\xi,\eta)=\rho_{4}|\eta-\rho_5\xi|^{2}+O(2^{4k})$. We have also assumed that $k_{1}-k\leq D_{0}^{2}$ and $|\eta-\rho_5\xi|\ll 2^{k}$. (again these constants may depend on $\mathcal{B}$).

(1) Suppose $|k|\succeq m/6$. If $|k|\succeq m/2$, then we can directly use (\ref{basiccon}), estimating $\widetilde{F}$ in $L^{2}$, to bound\[\kappa\preceq (m+k)+\frac{k}{2}+m-j_{1}-\frac{k}{2}-j_{2}+k\preceq 2m+4k\preceq 0,\] which implies (\ref{lowgoal}) because $\min(j_{1},j_{2})\geq |k|\geq m/3$. Thus we may assume $-m/2\prec k\preceq -m/6$. By inserting suitable cutoff functions to the weight ${m}(\xi,\eta)$, we may consider the integral in the regions where $|\eta-\rho_5\xi|\sim 2^{-r}$ for $|k|\leq r\prec m/2$, and where $|\eta-\rho_5\xi|\lesssim 2^{-m/2}\langle m\rangle^{A}$.

In the first situation we have $|\nabla_{\eta}\Phi|\sim|\nabla_{\xi}\Phi|\sim 2^{-r}$. Suppose $\max(j_{1},j_{2})\prec m-r$, we will use Proposition \ref{ips0}, setting \[K=2^{m},\quad n=1,\quad \epsilon\sim2^{-r},\quad\lambda\sim2^{\max(j_{1},j_{2},r)},\] to bound (say) $\kappa\leq -10m$ (here the restriction $\lambda\geq 2^{r}$ is due to the newly introduced cutoff factor $\psi(2^{r}(\eta-\rho_5\xi))$). Moreover, if  $j\succ m-r$, we may also assume $j\succ j_{1}$, so we can integrate by parts in $\xi$ with fixed $\eta$ exactly as in the estimate of (\ref{jbig}), where we choose, in Proposition \ref{ips0}, \[K=2^{m},\quad n=1,\quad\epsilon\sim2^{j-m},\quad\lambda\sim2^{\max(r,j_{1})},\] to obtain sufficient decay (note that $j\succ m-r\succ \max(r,m/2)$).

Now we have $j\preceq m-r\preceq \max(j_{1},j_{2})$. Suppose $j_{1}\geq j_{2}$, we then use (\ref{basiccon}), estimating $\widetilde{F}$ in $L^{2}$, to bound\[\kappa\preceq (m-r)+\frac{k}{2}+m-(m-r)-\frac{k}{2}-m=0,\] and recover the logarithmic loss because $\min(j_{1},j_{2})\geq|k|\geq m/7$. Here we still need to verify the validity of (\ref{basiccon}) with weight\[\widetilde{{m}}(\xi,\eta)={m}(\xi,\eta)\psi_{1}(2^{-k}\xi)\psi_{2}(2^{-k}\eta)\psi_{3}(2^{l}(\eta-\rho_5\xi)).\] Now by the algebra property of the norm $\|\mathcal{F}^{-1}M\|_{L^{1}}$, we only need to bound this norm for the function\[\mathfrak{n}(\xi,\eta)=\psi_{1}(2^{-k}\xi)\psi_{3}(2^{l}(\eta-\rho_5\xi)),\] which is easily estimated by a linear transformation and rescaling.

In the second situation above, we must have $j\preceq m/2$, otherwise we could bound the integral in the same way as in the estimate of (\ref{jbig}). Therefore we may assume $j_{1}\geq j_{2}$, and use (\ref{basiccon}), estimating $\widetilde{F}$ in $L^{2}$, to bound \[\kappa\preceq \frac{m}{2}+\frac{k}{2}+m-j_{1}-\frac{k}{2}-\frac{3m}{2}+\frac{j_{2}-k}{2}\preceq 0.\] Since $\min(j_{1},j_{2})\succeq m/6$, this proves (\ref{lowgoal}).

(2) Suppose $|k|\prec m/6$. By inserting suitable cutoff functions, we will consider the integral in regions where $|\eta-\rho_5\xi|\sim 2^{-r}$, for $|k|\leq r\prec 3|k|$, and where $|\eta-\rho_5\xi|\lesssim 2^{3k}\langle m\rangle^{A}$. First suppose $r\prec 3|k|$, then we will have $|\nabla_{\eta}\Phi|\sim 2^{-r}$ as well as $|\nabla_{\xi}\Phi|\sim 2^{-l}$, and note that $r\prec m/2$. This will allow us to assume that $\max(j_{1},j_{2})\succeq m-r\succeq j$, since if $\max(j_{1},j_{2})\prec m-r$, we will be able to use Proposition \ref{ips0} to integrate by parts in $\eta$, setting \[K=2^{m},\quad k=1,\quad\epsilon\sim2^{-l},\quad\lambda\sim2^{\max(j_{1},j_{2},r)}\] to obtain sufficient decay, and if $j\succ m-r$ we will be able to integrate by parts in $\xi$ just as above, having assumed that $j\succ\min(j_{1},j_{2})$.

Now that we have $\max(j_{1},j_{2})\succeq m-r\succeq j$, we may assume $j_{1}\geq j_{2}$ and use (\ref{basiccon}), estimating $\widetilde{F}$ in $L^{2}$ to bound\[\kappa\preceq j+\frac{k}{2}+m-j_{1}-\frac{k}{2}-m\preceq 0.\] Since either $\kappa\leq -m/10$ or $j_{1}\geq j_{2}\geq m/20$, this proves (\ref{lowgoal}).

Finally, let us assume $|k|\prec m/6$ and we are in the region $|\eta-\rho_5\xi|\lesssim 2^{3k}\langle m\rangle^{A}$. Since now $|\nabla_{\eta}\Phi|\lesssim 2^{3k}\langle m\rangle^{A}$, we must have $j\preceq m+3k$ (otherwise we integrate by parts in $\xi$ as before; note also that $3k\prec m/2$). In this situation we need much more careful analysis of the phase function, which we will carry out now.

recall that \begin{equation}\Phi(\xi,\eta)=\sum_{\alpha=1}^{2}\sum_{\beta=0}^{2}\sigma_{\alpha,\beta}|\zeta_{\beta}|^{2\alpha}+O(2^{6k}),\end{equation} where $\zeta_{0}=\xi$, $\zeta_{1}=\xi-\eta$ and $\zeta_{2}=\eta$, and the coefficients are
\begin{equation}\sigma_{1,0}=\frac{c_{\sigma}^{2}}{2b_{\sigma}},\quad \sigma_{1,1}=-\frac{c_{\mu}^{2}}{2b_{\mu}},\quad\sigma_{1,2}=-\frac{c_{\nu}^{2}}{2b_{\nu}};
\end{equation} \begin{equation}\sigma_{2,0}=\frac{c_{\sigma}^{4}}{8b_{\sigma}^{3}},\quad\sigma_{2,1}=-\frac{c_{\mu}^{4}}{8b_{\mu}^{3}},\quad\sigma_{2,2}=-\frac{c_{\nu}^{4}}{8b_{\nu}^{3}}.\end{equation} Now, in order for the quadratic term to be a perfect square (which we have assumed before), we must have $\sigma_{1,0}\sigma_{1,1}+\sigma_{1,1}\sigma_{1,2}+\sigma_{1,2}\sigma_{1,0}=0$, or equivalently
\begin{equation}\label{second}\frac{b_{\sigma}}{c_{\sigma}^{2}}-\frac{b_{\mu}}{c_{\mu}^{2}}-\frac{b_{\nu}}{c_{\nu}^{2}}=0.\end{equation} Recall also that 
\begin{equation}\label{first}b_{\sigma}-b_{\mu}-b_{\nu}=0.\end{equation} Now we compute the fourth-order term; note that apart from an error of at most $O(2^{6k})$, we may assume $\eta=\rho_{5}\xi$, and that $\rho_{5}=-\sigma_{1,0}/\sigma_{1,2}$. We again have two situations.

(A) Suppose the $c$'s are not all equal, then in particular no two of the $c$'s can be equal, and we can compute $\rho_{7}=(c_{\sigma}^{2}-c_{\mu}^{2})/(c_{\nu}^{2}-c_{\mu}^{2})$. Therefore we may assume \[\xi=(c_{\nu}^{2}-c_{\mu}^{2})v,\quad\eta=(c_{\sigma}^{2}-c_{\mu}^{2})v;\quad\xi-\eta=(c_{\nu}^{2}-c_{\sigma}^{2})v,\] where $|v|\sim 2^{k}$. Up to a constant we may also assume \[b_{\sigma}=\frac{1}{c_{\nu}^{2}}-\frac{1}{c_{\mu}^{2}}, \quad b_{\mu}=\frac{1}{c_{\nu}^{2}}-\frac{1}{c_{\sigma}^{2}},\quad b_{\nu}=\frac{1}{c_{\sigma}^{2}}-\frac{1}{c_{\mu}^{2}}.\] Therefore we may compute \[\sum_{\beta=0}^{2}\sigma_{2,\beta}|\zeta_{\beta}|^{4}=\lambda|v|^{4},\] where up to a constant
\begin{equation}\lambda=(c_{\sigma}c_{\mu}c_{\nu})^{6}\sum_{\mathrm{cyclic}}\frac{c_{\mu}^{2}-c_{\nu}^{3}}{c_{\sigma}^{2}}=-(c_{\sigma}c_{\mu}c_{\nu})^{4}(c_{\sigma}^{2}-c_{\mu}^{2})(c_{\mu}^{2}-c_{\nu}^{2})(c_{\nu}^{2}-c_{\sigma}^{2})\neq 0.\end{equation} This means that $\Phi(\xi,\eta)=\rho_{6}|\xi|^{4}+O(2^{6k})$, and also that the function\[\widetilde{{m}}(\xi,\eta)=\frac{2^{4k}{m}(\xi,\eta)}{\Phi(\xi,\eta)}\psi(2^{-k}\xi)\psi_{1}(2^{-3k}(\eta-\rho_{7}\xi)),\] where $\psi$ is supported in $|\xi|\sim 1$ and $\psi_{1}$ supported in $|\xi|\lesssim 1$, has inverse Fourier transform bounded in $L^{1}$ (with bounds depending on $\mathcal{B}$), by a linear transformation and rescaling argument. Therefore, we can integrate by parts in $s$, producing $I_{0}$, $I_{1}$ and $I_{2}$ terms, and use the variant of (\ref{basiccon}) with multiplier $\widetilde{m}$ to bound $\kappa\preceq\max(\kappa_{1},\kappa_{2})$, where\begin{equation}
\begin{aligned}2^{\kappa_1}&=\varepsilon_1^{-2}\|I_0\|_{L^2},&\kappa_{1}&\preceq (m+3k)+\frac{k}{2}-4k-j_{1}-\frac{k}{2}-3m/2+\frac{j_{2}-k}{2}\preceq-\frac{m}{2}-k\leq -\frac{m}{4},\\2^{\kappa_2}&=\varepsilon_1^{-2}\|I_1\|_{L^2},&\kappa_{2}&\preceq (m+3k)+\frac{k}{2}+m-4k-3m/2-\frac{k}{2}-m=-\frac{m}{2}-k\leq -\frac{m}{4},\end{aligned}\end{equation} which proves (\ref{lowgoal}).

(B) Suppose that $c_{\sigma}=c_{\mu}=c_{\nu}$, and $b_{\sigma}-b_{\mu}-b_{\nu}=0$. By extracting a constant we may assume $c_{\sigma}=1$; by symmetry we may assume $b_{\mu},b_{\nu}>0$, so \[\Phi(\xi,\eta)=\sqrt{|\xi|^{2}+b_{\sigma}^{2}}-\sqrt{|\xi-\eta|^{2}+b_{\mu}^{2}}-\sqrt{|\eta|^{2}+b_{\nu}^{2}},\] which implies that $\rho_{5}=b_{\nu}/b_{\sigma}$ and that $\Phi(\xi,\eta)=\partial_{\eta}\Phi(\xi,\eta)=0$ at the point $\eta=\rho_5\xi$ for all $\xi$. Now, if we expand $\Phi$ as a power series of $\xi$ and $\eta-\rho\xi$, the constant term will be $0$ and the second order term will be $\rho_{7}|\eta-\rho_5\xi|^{2}$; moreover, each term of degree four or higher must contain at least \emph{two} factors of $\eta-\rho_5\xi$. Therefore, if we restrict to regions where $|\eta-\rho_5\xi|\sim 2^{-r}$ for $r\prec m/2$, we will have $|\nabla_{\eta}\Phi|\sim |\nabla_{\xi}\Phi|\sim 2^{-r}$, regardless of the relationship between $r$ and $k$. In this situation we can thus repeat the previous arguments and assume $j\preceq m-r\preceq\max(j_{1},j_{2})$ and close the estimate using (\ref{basiccon}) as before.

Now we may restrict to the region where $|\eta-\rho_7\xi|\lesssim 2^{-m/2}\langle m\rangle^{A}$, so we have $j\preceq m/2$. Under this assumption we may then assume $j_{1}\geq j_{2}$ and deduce\[\kappa\preceq \frac{m}{2}+\frac{k}{2}+m-j_{1}-\frac{k}{2}-\frac{3m}{2}+\frac{j_{2}}{2}-\frac{k}{2}\preceq0.\] Moreover, we have either $\kappa\leq-o\cdot m$ or $\min(j_{1},j_{2})\geq o\cdot m$, unless  $j_{1},j_{2},k=o\cdot m$ and $j=(1/2+o)m$. In this final scenario, we will \emph{not} insert any cutoff to the integral as above; instead we will use a special argument as follows.

Note that\begin{equation}\widetilde{I}_{jk}(x)=\int_{a}^{b}\int_{(\mathbb{R}^{3})^{2}}e^{{i}(s\Phi(\xi,\eta)+x\cdot\xi)}\varphi_{j}(x)\varphi_{k}(\xi){m}(\xi,\eta)\widehat{F}(s,\xi-\eta)\widehat{G}(s,\eta)\,\mathrm{d}\xi\mathrm{d}\eta\,\mathrm{d}s.\nonumber\end{equation} Since $j_{1}, j_{2}$ and $k$ are all $o(m)$ in absolute value, the function $\chi(2^{-k}\xi){m}(\xi,\eta)\widehat{F}(\xi-\eta)\widehat{G}(\eta)$ will be bounded in a suitable Schwartz norm by $2^{o\cdot m}$. Now we make a change of variables and let $\eta-\rho\xi=\zeta$, then we will be estimating, for fixed $s$, an integral of form\begin{equation}\label{double}\int_{(\mathbb{R}^{3})^{2}}e^{{i}(s\Psi(\xi,\zeta)+x\cdot\xi)}\phi(\xi,\zeta)\,\mathrm{d}\xi\mathrm{d}\zeta,\end{equation} where $\phi$ is a function with Schwartz norm bounded by $2^{o\cdot m}$ and\[\Psi(\xi,\zeta)=\rho_{8}|\zeta|^{2}+\sum_{q,r=1}^{3}\zeta^{q}\zeta^{r}h_{qr}(\xi,\zeta),\] with $h_{q,r}(0,0)=0$, so that we have $|\nabla_{\zeta}\Psi|\sim |\zeta|$ where $|\xi|+|\zeta|$ is small. Now, if we cutoff in the region $|\zeta|\gtrsim 2^{-2m/5}$, we will be able to integrate by parts in $\zeta$ to obtain sufficient decay; if we cutoff in the region $|\zeta|\ll 2^{-2m/5}$, we will then fix $\zeta$ and integrate by parts in $\xi$, noting that $|s\zeta^{q}\zeta^{r}|\lesssim 2^{m/5}$ and $|x|\sim 2^{j}\gtrsim 2^{2m/5}$, to obtain sufficient decay for this integral. This completes the proof in the final scenario and thus finishes the proof of (\ref{lowgoal}).

\section{Medium frequencies}\label{med} In this section we consider the case when $\max(k,k_{1},k_{2})<K_{0}^{2}$, and $\max(k_{1},k_{2})>-K_{0}$. By (\ref{normcomp}), we may also assume $\max(k_{1},k_{2})<K_{0}$.

First we shall prove the crucial bilinear lemma introduced in Section \ref{bililemma}. This allows us to restrict the angular variable under very mild restrictions, and will be used frequently in the proof below.
\begin{lemma}\label{angular}Restrict the integral (\ref{bilibili3}) by adding two cutoff functions and forming\begin{equation}\label{bilibili4}I':=\int_{\mathbb{R}^{3}}e^{{i}t\Phi(\xi,\eta)}\psi_{1}(2^{\kappa}\Phi(\xi,\eta))\varphi_{k}(\xi){m}(\xi,\eta)\psi_{2}(2^{\upsilon}\sin\angle(\xi,\eta))\widehat{F}(s,\xi-\eta)\widehat{G}(s,\eta)\,\mathrm{d}\eta,\end{equation} where $|t|\sim 2^{m}$, $\psi_{1}$ is Schwartz, $\psi_{2}(z)$ is supported in $|z|\sim 1$. Assume that\[\max(k_{1},k_{2})\geq -2K_{0}^{2},\quad\kappa\prec m,\quad\max(0,k_{1},k_{2}):=\overline{k},\quad\max(l_{1},l_{2}):=\overline{l}<m/10,\quad j_{1}-k_{1}\prec m,\] then we have $|I'|\lesssim 2^{-20m}\varepsilon_{1}^{2}$, in each of the following three cases:
\begin{enumerate}
\item When $|k_{1}-k_{2}|\leq 6$, $\upsilon\prec (m+k)/2-\overline{k}$, and $m+k\succ2\overline{l}$.
\item When $|k-k_{1}|\leq 6$, $\upsilon\prec (m+k_{2})/2-\overline{k}$, and $m+k_{2}\succ 2\overline{l}$.
\item When $|k-k_{2}|\leq 6$, $|k_{1}|\prec (m-\overline{l})/2$ and $\upsilon\prec (m-\overline{l})/2$.
\end{enumerate}
\end{lemma}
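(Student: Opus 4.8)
The plan is to show that $I'$ is negligible by integrating by parts in $\eta$ many times, using the sharp integration by parts lemma, Proposition \ref{ips0}. Since the factor $\psi_{2}(2^{\upsilon}\sin\angle(\xi,\eta))$ localizes $\eta$ (and hence $\xi-\eta$) to a thin double cone of opening angle $\sim 2^{-\upsilon}$ about the line $\mathbb{R}\xi$, after rotating so that $\xi=|\xi|e_{1}$ and then rotating in the $(e_{2},e_{3})$--plane we may assume $\eta=(\eta_{1},\eta_{2},0)$ with $|\eta_{1}|\sim 2^{k_{2}}$, $|\eta_{2}|\sim 2^{k_{2}}\sin\angle(\xi,\eta)\sim 2^{k_{2}-\upsilon}$, and correspondingly $(\xi-\eta)_{2}=-\eta_{2}$. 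For medium frequencies one has $\Lambda_{\mu}(\xi-\eta),\Lambda_{\nu}(\eta)\sim_{K_{0}}1$, and in each case the two comparable frequencies among $|\xi|,|\xi-\eta|,|\eta|$ are of size $\sim_{K_{0}}1$, so all losses in $K_{0}$ will be absorbed. The Gevrey hypothesis in Proposition \ref{ips0} holds since each $\Lambda_{\alpha}$ is real analytic; the $L^{1}$--Gevrey bounds on the amplitude come from the spatial localizations of $\widehat{F},\widehat{G}$ (scales $2^{j_{1}},2^{j_{2}}$, contributing through $j_{1}-k_{1}$ and $j_{2}-k_{2}$), their spherical caps $S_{l_{i}}$ (angular scale $2^{l_{i}}$), and the cutoffs $\psi_{1}(2^{\kappa}\Phi)$ and $\psi_{2}(2^{\upsilon}\sin\angle(\xi,\eta))$.

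The heart of the matter is a lower bound, on the support of the amplitude, for a suitable directional derivative of $\Phi$. Writing $\nabla_{\eta}\Phi=\nabla\Lambda_{\mu}(\xi-\eta)-\nabla\Lambda_{\nu}(\eta)=\tfrac{c_{\mu}^{2}(\xi-\eta)}{\Lambda_{\mu}(\xi-\eta)}-\tfrac{c_{\nu}^{2}\eta}{\Lambda_{\nu}(\eta)}$, one computes in the rotated coordinates
\[
\partial_{\eta_{2}}\Phi=-\eta_{2}\Big(\tfrac{c_{\mu}^{2}}{\Lambda_{\mu}(\xi-\eta)}+\tfrac{c_{\nu}^{2}}{\Lambda_{\nu}(\eta)}\Big),\qquad \partial_{\eta_{1}}\Phi=\tfrac{c_{\mu}^{2}(|\xi|-\eta_{1})}{\Lambda_{\mu}(\xi-\eta)}-\tfrac{c_{\nu}^{2}\eta_{1}}{\Lambda_{\nu}(\eta)}.
\]
When $\mathrm{sgn}(\mu)=\mathrm{sgn}(\nu)$ the scalar $S:=\tfrac{c_{\mu}^{2}}{\Lambda_{\mu}(\xi-\eta)}+\tfrac{c_{\nu}^{2}}{\Lambda_{\nu}(\eta)}$ is $\sim_{K_{0}}1$ throughout, so $|\partial_{\eta_{2}}\Phi|\gtrsim|\eta_{2}|\sim 2^{k_{2}-\upsilon}$ on the whole support and we integrate by parts in $\eta_{2}$ (the directional version of Proposition \ref{ips0}). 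When $\mathrm{sgn}(\mu)\neq\mathrm{sgn}(\nu)$ the scalar $S$ may vanish; but on its zero set the two terms of $\partial_{\eta_{1}}\Phi$ combine to give exactly $\partial_{\eta_{1}}\Phi=a|\xi|$ with $a=\tfrac{c_{\mu}^{2}}{\Lambda_{\mu}(\xi-\eta)}\sim_{K_{0}}1$, so $|\partial_{\eta_{1}}\Phi|\gtrsim 2^{k}$ in a neighborhood of the zero set of $S$, while away from that neighborhood we revert to the $\eta_{2}$ bound. Splitting the amplitude accordingly, at each point of the support one of $|\partial_{\eta_{1}}\Phi|\gtrsim 2^{k}$ or $|\partial_{\eta_{2}}\Phi|\gtrsim 2^{k_{2}-\upsilon}$ (up to the splitting scale) holds, and we integrate by parts in whichever variable is good. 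The three cases (1)--(3) correspond to the three ways two of $|\xi|,|\xi-\eta|,|\eta|$ can be comparable; in (2) and (3) the same computation is run after relabelling, the direction transverse to the collinear (space--resonant) configuration being the $\perp$--component of $\xi-\eta$, respectively of $\eta$, relative to the appropriate axis.

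It then remains to feed this into Proposition \ref{ips0}: take $K=2^{m}$, $n=1$, $\epsilon_{1}$ equal to the lower bound just obtained (so $\epsilon\sim 2^{k_{2}-\upsilon}$, or $\epsilon\sim 2^{k}$ near the degenerate set), and $\lambda\sim 2^{\max(j_{1}-k_{1},\,j_{2}-k_{2},\,l_{1},\,l_{2},\,\kappa,\,\upsilon-k_{2})}$, the terms coming respectively from the spatial and angular localizations of $\widehat{F},\widehat{G}$, from the derivative $2^{\kappa}\nabla_{\eta}\Phi$ of $\psi_{1}(2^{\kappa}\Phi)$ (bounded by $2^{\kappa}$ since $|\nabla_{\eta}\Phi|\lesssim 1$), and from the derivative of $\psi_{2}(2^{\upsilon}\sin\angle(\xi,\eta))$. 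The hypotheses $\kappa\prec m$, $\overline{l}<m/10$, $j_{1}-k_{1}\prec m$ (together with the reductions of Section \ref{beginning} bounding $j_{2}$), and the stated restriction on $\upsilon$ in each of the three cases, are exactly what makes $M=\min(K\epsilon_{1}^{2},\,K\epsilon_{1}/\lambda)\gtrsim\langle m\rangle^{2A}$, whence $|I'|\lesssim e^{-\gamma M^{\gamma}}\lesssim 2^{-20m}\varepsilon_{1}^{2}$.

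The step I expect to be the main obstacle is the sign--mismatch sub-case $\mathrm{sgn}(\mu)\neq\mathrm{sgn}(\nu)$ near the zero set of $S$ when $|\xi|\sim 2^{k}$ is small: there $\nabla_{\eta}\Phi$ is genuinely small in all directions (one is in the vicinity of a space--time resonance that can occur for special values of $(b_{\alpha},c_{\alpha})$), and a naive integration by parts in $\eta$ gains nothing. This region has to be excised using the cutoff $\psi_{1}(2^{\kappa}\Phi)$ — exploiting that $|\Phi|$ is bounded below off an $\eta$--set whose measure is controlled, or, when $c_{\mu}=c_{\nu}$, that $|\Phi|\gtrsim 1$ there outright — and one checks that what is left either is of negligible volume, or is ruled out because the threshold $(m+k)/2-\overline{k}$ (resp. $(m+k_{2})/2-\overline{k}$, $(m-\overline{l})/2$) is then $\prec 0$, making the hypothesis vacuous. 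Verifying that this excision is consistent with the stated hypotheses, uniformly in $\mathcal{B}$, is the only delicate point; the rest is routine bookkeeping with Proposition \ref{ips0}.
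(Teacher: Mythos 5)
Your plan of integrating by parts in fixed Cartesian directions of $\eta$, using componentwise lower bounds on $\nabla_{\eta}\Phi$, has a fatal gap: the amplitude contains $\widehat{G}(s,\eta)$, and each derivative of $\widehat{G}$ in \emph{any} direction costs a factor $2^{j_{2}}$ (the spatial support of $G$ is a ball of radius $\sim 2^{j_{2}}$, so there is no directional gain), yet the lemma's hypotheses impose no bound whatsoever on $j_{2}$ --- deliberately, since in the applications (Sections \ref{mix} and \ref{medout}) the lemma is invoked with only $j_{1}-k_{1}\prec m$ known and $j_{2}$ possibly as large as $\sim 4m$. Your own $\lambda$ contains $j_{2}-k_{2}$, so $K\epsilon_{1}/\lambda\sim 2^{m+k_{2}-\upsilon-(j_{2}-k_{2})}$ can be exponentially small in $m$ and Proposition \ref{ips0} yields nothing. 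This is exactly the difficulty the paper's proof is designed to sidestep: it fixes $\rho=|\eta|$, expands $F$ and $G$ in spherical harmonics, and integrates by parts only in the polar angle $\theta$ of $\eta$ at fixed radius; then the $G$-factor enters only through $Y_{q'}^{m'}(\theta,\phi)$ with $q'\lesssim 2^{l_{2}}$, costing $2^{\overline{l}}<2^{m/10}$ per derivative, while the phase for the $\theta$-integral reduces to $(t+r)\Lambda_{\mu}(\rho')+\sigma\rho'$ with $\partial_{\theta}(\rho'^{2})=-2|\xi|\rho\sin\theta$ by the law of cosines, which is bounded below on the support of $\psi_{2}$ with no case analysis at all. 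In particular, since fixing $|\eta|$ removes $\Lambda_{\nu}(\eta)$ (and $\Lambda_{\sigma}(\xi)$) from the phase, the sign-mismatch scenario near the zero set of your scalar $S$, which you single out as the delicate point requiring an excision by $\psi_{1}$, never arises in the paper's argument.

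Even setting aside $j_{2}$, treating $\psi_{1}(2^{\kappa}\Phi)$ and the spatial roughness of $\widehat{F}$ as amplitude factors (your $\lambda\gtrsim\max(2^{\kappa},2^{j_{1}-k_{1}})$) is quantitatively insufficient: $\kappa$ and $j_{1}-k_{1}$ may be as large as $m-A\log m$, in which case $K\epsilon_{1}/\lambda\lesssim 2^{A\log m+k_{2}-\upsilon}$, which is far too small when $\upsilon$ is near its allowed maximum $(m+k)/2-\overline{k}$. The paper instead converts both into phase modulations: it writes $e^{it\Phi}\psi_{1}(2^{\kappa}\Phi)=\int_{\mathbb{R}}2^{-\kappa}\widehat{\psi_{1}}(2^{-\kappa}r)e^{i(t+r)\Phi}\,\mathrm{d}r$ and restricts $|r|\lesssim 2^{(m+\kappa)/2}$ (see (\ref{modifytime})), and it Fourier-decomposes the radial dependence of $F$, $f_{q}^{m}(\rho')=\int e^{i\sigma\rho'}\widehat{f_{q}^{m}}(\sigma)\,\mathrm{d}\sigma$ with $|\sigma|\lesssim 2^{(m+j_{1})/2}$, so that the perturbations $r\Phi$ and $\sigma\rho'$ are dominated by the main term in $\partial_{\theta}H$ precisely under the stated hypotheses $\kappa\prec m$ and $j_{1}-k_{1}\prec m$. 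These square-root thresholds, together with the fixed-radius angular integration, are the substance of the lemma; as written, your argument would require genuinely stronger hypotheses (bounds on $j_{2}$, and much smaller $\kappa$, $j_{1}$, or $\upsilon$) and therefore does not prove the statement.
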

\begin{proof} Let $\max(k_{1},k_{2}):=k_{3}$. By symmetry, we may assume $\xi=|\xi|e_{1}$ where $|\xi|\sim 2^{k}$, and $e_{i}$ are coordinate vectors; we then make several reductions. Fix a time $s$, let \[\rho=|\eta|,\quad\theta=\angle(e_{1},\eta),\quad\phi=\angle(\eta-(\eta\cdot e_{1})e_{1},e_{2});\]
\[\rho'=|\xi-\eta|,\quad\theta'=\angle(e_{1},\xi-\eta),\quad\phi'=\angle(\xi-\eta-((\xi-\eta)\cdot e_{1})e_{1},e_{2})\] be the spherical coordinates, we expand as in (\ref{decomp})\[F(s,\xi-\eta)=\sum_{q\lesssim 2^{l_{1}}}\sum_{m=-q}^{1}f_{q}^{m}(\rho')Y_{q}^{m}(\theta',\phi');\qquad G(s,\eta)=\sum_{q'\lesssim 2^{l_{2}}}\sum_{m'=-q'}^{q'}g_{q'}^{m'}(\rho)Y_{q'}^{m'}(\theta,\phi),\] and fix a choice $(\rho,q,q',m,m')$. Using the Fourier transform of $\psi_{1}$ we can write\begin{equation}\label{modifytime}e^{it\Phi(\xi,\eta)}\psi_{1}(2^{\kappa}\Phi(\xi,\eta))=\int_{\mathbb{R}}2^{-\kappa}\widehat{\psi_{1}}(2^{-\kappa}r)e^{i(t+r)\Phi(\xi,\eta)}\,\mathrm{d}r\end{equation} and restrict $|r|\lesssim 2^{(m+\kappa)/2}$, then fix one $r$; similarly decomposing\[f_{q}^{m}(\rho')=\int_{\mathbb{R}}e^{i\sigma\rho'}\widehat{f_{q}^{m}}(\sigma)\,\mathrm{d}\sigma,\] we may assume $|\sigma|\lesssim 2^{(m+j_{1})/2}$ (otherwise $\widehat{f_{q}^{m}}(\sigma)$ decays rapidly), and then fix $\sigma$. After absorbing $m(\xi,\eta)$ into cutoff functions, we reduce to a $\theta$ integral\begin{equation}\label{angular0}I''=\int_{\mathbb{S}^{2}}e^{i(t+r)\Lambda(\rho')+\sigma\rho'}\psi_{2}(2^{\upsilon}\sin\theta)Y_{q}^{m}(\theta',\phi')Y_{q'}^{m'}(\theta,\phi)\,\mathrm{d}\theta\mathrm{d}\phi,\end{equation} where $\Lambda=\Lambda_{\nu}$ for some $\nu\in\mathcal{P}$, $\rho'$, $\theta'$ and $\phi'$ are functions of $\theta$ and $\phi$ with fixed $\rho$. Let $H=(t+r)\Lambda(\rho')+\sigma\rho'$, note that $(\rho',\theta',\phi')$ is smooth in $\theta$, and $\partial_{\theta}((\rho')^{2})=-2|\xi|\rho\sin\theta$ by the law of cosines. Using the formula \begin{equation}\label{spec}\frac{\mathrm{d}^{n}}{\mathrm{d}x^{n}}f(g(x))=\sum_{p=0}^{n}\sum_{\alpha_{1}+\cdots+\alpha_{p}=n-p}A(n,p;\alpha_{1},\cdots,\alpha_{p})\cdot f^{(p)}(g(x))\prod_{i=1}^{p}g^{(\alpha_{i}+1)}(x),\end{equation} which can be proved by induction, we deduce that\begin{equation}\label{ipsy}
\begin{aligned}|\partial_{\theta}\rho'|&\sim 2^{k-\upsilon},&|\partial_{\theta}H|&\sim 2^{m+k-\upsilon},&|\partial_{\theta}^{\alpha}(\theta',\phi')|&\lesssim (C\alpha)!2^{\alpha k_{3}};\\
|\partial_{\theta}^{\alpha}H|&\lesssim (C\alpha)!2^{\alpha k_{3}}2^{m+k},&|\partial_{\theta}^{\alpha}\rho'|&\lesssim (C\alpha)!2^{\alpha k_{3}}2^{k}
\end{aligned}
\end{equation} in case (1), and that
\begin{equation}\label{ipsz}
\begin{aligned}|\partial_{\theta}\rho'|&\sim 2^{k_{2}-\upsilon},&|\partial_{\theta}H|&\sim 2^{m+k_{2}-\upsilon},&|\partial_{\theta}^{\alpha}(\theta',\phi')|&\lesssim (C\alpha)!2^{\alpha k_{3}};\\
|\partial_{\theta}^{\alpha}H|&\lesssim (C\alpha)!2^{\alpha k_{3}}2^{m+k_{2}},&|\partial_{\theta}^{\alpha}\rho'|&\lesssim (C\alpha)!2^{\alpha k_{3}}2^{k_{2}}
\end{aligned}
\end{equation}
in case (2), and that
\begin{equation}\label{ipsv}
\begin{aligned}|\partial_{\theta}\rho'|&\gtrsim 2^{2k_{3}-k_{1}-\upsilon},&|\partial_{\theta}H|&\gtrsim 2^{m+2k_{3}-\upsilon},&&|\partial_{\theta}^{\alpha}(\theta',\phi')|\lesssim (C\alpha)!2^{\alpha(2k_{3}+n)};\\
|\partial_{\theta}^{\alpha}\rho'|&\lesssim (C\alpha)!2^{k_{1}}2^{\alpha(2k_{3}+n)},&|\partial_{\theta}^{\alpha}H|&\lesssim (C\alpha)!2^{m+2k_{1}}2^{\alpha(2k_{3}+n)},&&n=\max(-k_{1},-2k_{1}-\upsilon)
\end{aligned}
\end{equation} in case (3). We then integrate by parts in $\theta$, using Proposition \ref{ips0} and Remark \ref{remark0}, setting\[K=2^{m+k},\quad n=1,\quad \epsilon\sim 2^{-\upsilon},\quad\lambda\sim 2^{\max(l_{1}+k_{3},l_{2},\upsilon)},\quad\lambda'=2^{k_{3}}\] in case (1), \[K=2^{m+k_{2}},\quad n=1,\quad \epsilon\sim 2^{-\upsilon},\quad\lambda\sim 2^{\max(l_{1}+k_{3},l_{2},\upsilon)},\quad\lambda'=2^{k_{3}}\] in case (2), and \[K=2^{m+2k_{1}},\quad n=1,\quad\epsilon\sim2^{2k_{3}-2k_{1}-\upsilon},\quad\lambda\sim2^{\max(\upsilon,l_{2},l_{1}+2k_{3}+n)},\quad\lambda'\sim2^{2k_{3}+n}\] in case (3), so that we can bound $|J'|\lesssim \exp(-\gamma\langle m\rangle^{A\gamma})$ with some fixed constant $\gamma$ and conclude the proof, provided $A$ is chosen large enough. 
\end{proof}\subsection{Mixed inputs}\label{mix} Suppose $\min(k_{1},k_{2})\leq -K_{0}^{2}$. By symmetry, we may assume $-K_{0}<k_{1}<K_{0}^{2}$ and $k_{2}\leq -K_{0}^{2}$. If $k\leq -K_{0}$, using Proposition \ref{propp} we have $|\Phi(\xi,\eta)|\gtrsim 1$, so we can integrate by parts in $s$, then use Proposition \ref{linearbound0} to conclude, in the same way as estimating (\ref{ibps00}); thus we will now assume $k\geq -K_{0}$, so we need to prove \begin{equation}\label{mixgoal}\|I_{jk}\|_{L^{2}}\lesssim \langle m\rangle^{-100}\langle j\rangle^{N_{0}}2^{-5j/6}\varepsilon_{1}^{2},\qquad\|\widehat{I_{jk}}\|_{L^{1}}\lesssim 2^{-j}\langle j\rangle^{-N_{0}}\langle m\rangle^{-100}\varepsilon_{1}^{2}.\end{equation}

First note that, if $j\leq (1+o)\max(m,\min(j_1,j_2))$ and we use a cutoff to restrict $|\Phi(\xi,\eta)|\gtrsim 2^{-m/9}$, we can then integrate by parts in $s$ to get $I_{0}$, $I_{1}$ and $I_{2}$ terms as in (\ref{ibps00}). Consider for example \[\widehat{I_{1}}(\xi)=\int_{a}^{b}\int_{\mathbb{R}^{3}}\frac{e^{is\Phi(\xi,\eta)}}{\Phi(\xi,\eta)}\psi(2^{m/9}\Phi(\xi,\eta)){m}(\xi,\eta)\widehat{\partial_{s}F}(s,\xi-\eta)\widehat{G}(s,\eta)\,\mathrm{d}\eta\mathrm{d}s,\] where $\psi$ is some cutoff; recall as in the proof of Lemma \ref{angular} that\begin{equation}\label{trick}\frac{\psi(2^{m/9}\Phi)}{\Phi}=\int_{\mathbb{R}}e^{ir\Phi}\chi(2^{-m/9}r)\,\mathrm{d}r\end{equation} for some $\chi\in\mathcal{G}_{6}$, and that we can restrict $|r|\lesssim \langle m\rangle^{A}2^{m/9}$, so we can include the cutoff $\psi(2^{m/9}\Phi)$ as part of the phase.  We then use (\ref{basiccon}) to bound\begin{multline*}\|I_{1}\|_{L^{2}}\lesssim  2^{m/9}\sup_{|r|\lesssim 2^{m/8}}\bigg\|\int_{a}^{b}\int_{\mathbb{R}^{3}}e^{i(s+r)\Phi}{m}(\xi,\eta)\widehat{\partial_{s}F}(s,\xi-\eta)\widehat{G}(s,\eta)\,\mathrm{d}\eta\mathrm{d}s\bigg\|_{L^{2}}\\\lesssim2^{(10/9+o)m}2^{-9m/8}2^{-\max(m,j)}\varepsilon_{1}^{2}\lesssim 2^{-(1+o)j}\varepsilon_{1}^{2}.\end{multline*} In the same way, we can bound $I_{0}$ and $I_{2}$ using Proposition \ref{linearbound0}, so this term will be acceptable.

Next suppose $j_{2}\prec j$; by (\ref{basicj}) we may also assume $j\preceq m$ and $j_{2}\prec m$. Since $|\nabla_{\eta}\Phi(\xi,\eta)|\sim 1$, using Proposition \ref{ips0}, we must have $j_{1}\succeq m$. Using Proposition \ref{linearbound0} and  (\ref{basiccon}), we obtain\[\|I\|_{L^{2}}\lesssim 2^{m}\langle m\rangle^{A}\cdot 2^{-m-k_{1}/2}\cdot 2^{6\delta m}2^{-3m/2}\varepsilon_{1}^{2}\lesssim 2^{-8j/9}\varepsilon_{1}^{2},\] which proves the first half of (\ref{mixgoal}); as for the second half, note that the above estimate would actually give $\|I\|_{L^{2}}\lesssim 2^{-(1+o)m}\varepsilon_{1}^{2}$ if $|k_{1}|\leq (1-1/20)m$, and when $|k_{1}|\geq (1-1/20)m$, we would have\[\|\widehat{I}\|_{L^{\infty}}\lesssim \varepsilon_{1}2^{m}2^{3k_{1}/2}\|F\|_{L^{2}}\lesssim 2^{k_{1}}\langle m\rangle^{A}\varepsilon_{1}^{2} \] using (\ref{linearb1}) and (\ref{linearb2}). Note that from above we can also assume $|\Phi(\xi,\eta)|\lesssim 2^{-m/9}$ and hence $|\Phi(\xi,\xi)|\lesssim 2^{-m/9}$ (which restricts $\xi$ to some set of volume at most $2^{-m/18}$), so we could close by H\"{o}lder, using also Proposition \ref{propp}. 

Now suppose $j\preceq j_{2}$. We may also assume $j\preceq m$, since otherwise we must have $\min(j_{1},j_{2})\succeq j$, and the proof would be similar as below with trivial modifiactions. To prove the first part of (\ref{mixgoal}), we simply use Proposition \ref{linearbound0} and (\ref{basiccon}) to bound\[\|I\|_{L^{2}}\lesssim 2^{m}\cdot\langle m\rangle^{-N_{0}}2^{-m}\cdot \langle j_{2}\rangle^{N_{0}}2^{-5j_{2}/6}\varepsilon_{1}^{2}\lesssim\langle j\rangle^{N_{0}/2}2^{-5j/6}\varepsilon_{1}^{2};\]now we work on the Fourier $L^{1}$ bound. If $j_{1}-k_{1}\succeq m$, this would follow from estimating both $F$ and $G$ factors in Fourier $L^{1}$ norm, using (\ref{linearb2}), (\ref{linearb3}) and H\"{o}lder; so we will assume $j_{1}-k_{1}\prec m$ (so in particular $j_{1}\preceq m$ and hence $j\preceq m$).

Note that we may assume $|\Phi|\leq 2^{-m/9}$ as above; actually since $|k_{1}|\leq m/2$, using (\ref{linearb5}), the bound can be improved to $|\Phi|\leq 2^{-m/4}$ by the same argument. Next we treat the case when $|\Phi|\lesssim 2^{\rho_0}$ where $ \rho_0=k_{1}-m/15$. If $k_{1}+m/2\preceq\overline{k}$, this means $|k_{1}|\geq (1/2-6\delta)m$, which restricts $|\Phi(\xi,\xi)|\lesssim 2^{-2m/5}$. If we moreover assume $|\nabla_{\xi}\Phi|\sim 2^{-r}$, then this restricts $\xi$, by Proposition \ref{propp}, to a set of volume $\lesssim 2^{\min(-r,r-2m/5)}$. Moreover we may assume $j\preceq m-r$ if $r\prec m/2$ (or otherwise we integrate by parts in $\xi$, using the smallness of $\nabla_{\xi}\Phi$), thus we have\[\|\widehat{I_{jk}}\|_{L^{1}}\lesssim 2^{-(5/6-o)m}2^{\min(-r,r-2m/5)/2}\varepsilon_{1}^{2}\lesssim 2^{-(1+o)(m-r)}\varepsilon_{1}^{2}\lesssim 2^{-(1+o)j}\varepsilon_{1}^{2}.\]

If $k_{1}+m/2\succ \overline{k}$, using Proposition \ref{angular}, we can restrict that $|\sin\angle(\xi,\eta)|\lesssim 2^{-\rho}$, where $\rho=(1/2-\delta-o)m$. Now we write $\alpha=\pm|\xi|$, $\beta=\pm|\eta|$, and fix $s$ and the direction vectors $\widehat{\xi}=\theta$ and $\widehat{\eta}=\phi$, so that we reduce to an integral\[2^{m}2^{-2\rho}\int_{\mathbb{R}}\widehat{F}(\alpha\theta-\beta\phi)\widehat{G}(\beta\phi){m}(\alpha,\beta)\,\mathrm{d}\beta,\] where ${m}(\alpha,\beta)$ is bounded and supported in the region where $|\Phi^{+}(\alpha,\beta)|\lesssim 2^{\rho_{0}}$, and also $|\alpha-\beta|\lesssim 2^{k_{1}}$; note that we have omitted the exponential factor after taking absolute values. By Schur's test, we only need to bound the integrals\[\int_{\mathbb{R}}{m}(\alpha,\beta)\widehat{F}(\alpha\theta-\beta\phi)\,\mathrm{d}\alpha,\qquad \int_{\mathbb{R}}{m}(\alpha,\beta)\widehat{F}(\alpha\theta-\beta\phi)\,\mathrm{d}\beta.\] By (\ref{linearb1}) and the fact that $|\partial_{\beta}\Phi_{+}|\sim1$, we know that the first integral is trivially bounded by $\varepsilon_{1}$, and the second integral is bounded by $2^{-k_{1}+\rho_{0}}\varepsilon_{1}$.  This gives the bound\[\|\widehat{I_{jk}}\|_{L^{1}}\lesssim 2^{m}2^{24\delta m}2^{-2\rho}(2^{-k_{1}+\rho_{0}})^{1/2}2^{-j_{2}}\varepsilon_{1}^{2}\lesssim 2^{-(1+o)j}\varepsilon_{1}^{2}.\]

Now we are left with the case when $|\Phi|\gtrsim 2^{\rho_{0}}$ with $\rho_{0}$ as above. Since also $|\Phi|\lesssim 2^{-m/4}$, we must have $|k_{1}|\geq 2m/9$, so $\xi$ is already restricted to a region of volume $\lesssim 2^{-m/9}$; by H\"{o}lder, we only need to bound $\|I\|_{L^{2}}\lesssim 2^{-(17/18+o)m}\varepsilon_{1}^{2}$. Notice that $|k_{1}|\leq m/2$ (since $j_{1}-k_{1}\leq m$), we will integrate by parts in $s$. The boundary term $I_{0}$ is clearly acceptable by (\ref{basiccon}); the term $I_{1}$ where the derivative falls on $F$ is estimated in the same way as $I_{0}$, where one uses (\ref{disper1}) to deduce that\[\|e^{is\Lambda}\partial_{t}F\|_{L^{\infty}}\lesssim 2^{-3m/2}2^{3j_{1}/2}2^{-3m/2}2^{-k_{1}/2}\lesssim 2^{-5m/2}2^{j_{1}}\lesssim 2^{-3m/2}2^{k_{1}}\varepsilon_{1}^{2}\] and get\[\|I_{1}\|_{L^{2}}\lesssim 2^{m-\rho_{0}}2^{-5m/6}2^{-3m/2}2^{k_{1}}\varepsilon_{1}^{3}\lesssim 2^{-m}\varepsilon_{1}^{3}.\] Finally, using (\ref{linearb5}) and (\ref{basiccon}) as well as the trick used in (\ref{trick}), we can bound $\|I_{2}\|_{L^{2}}\lesssim 2^{\kappa}\varepsilon_{1}^{2}$ with\[\kappa\leq -\rho_{0}+m-m-3m/2\leq -(17/18+o)m.\]

\subsection{Medium frequency output}\label{medout} Here we assume $k>-K_{0}$, so we need to prove the bound\begin{equation}\label{medgoal}\|I_{jk}\|_{L^{2}}\lesssim 2^{-5j/6}\langle j\rangle^{A}\varepsilon_{1}^{2},\qquad\|\widehat{I_{jk}}\|_{L^{1}}\lesssim 2^{-(1+o)j}\varepsilon_{1}^{2}.\end{equation} The case when $j\succ m$ or $\min(j_{1},j_{2})\succeq m$ can be treated in the same way as in Section \ref{mix} above; if $|\Phi|\gtrsim 1$, we can integrate by parts in $s$. Neither case requires new arguments, so we will now focus on the main contribution when $j\preceq m$, $\min(j_{1},j_{2})\prec m$ and $|\Phi|\ll 1$. We next separate two different situations.
\subsubsection{The case of large $j$'s}\label{easy} Here we assume $j_{2}=\max(j_{1},j_{2})\succeq m/3$. Note that $j\preceq m$; we will insert a cutoff function to restrict $|\nabla_{\xi}\Phi(\xi,\eta)|\sim 2^{-r}$. If $r\succ \min(m-j,m/2)$, then either we can fix $\eta$ (or $\xi-\eta$) and integrate by parts in $\xi$ as in the arguments before, or we have $j\preceq\min(j_{1},j_{2})$ In the latter case we can use the same arguments in Section \ref{mix} above, using Proposition \ref{linearbound0} and (\ref{basiccon}) to close. Below we will assume $r\preceq\min(m-j,m/2)$.

Using Lemma \ref{angular}, we can assume $|\sin\angle(\xi,\eta)|\lesssim 2^{-(1/2-o)m}$ (note that when $|k|\lesssim 1$, the insertion of a cutoff of form $\psi(2^{r}\nabla_{\xi}\Phi)$ will not change this bound, as seen in the proof of Lemma \ref{angular}). Fix a time $s$, the direction vectors $\xi_{e}$ and $\eta_{e}$ of $\xi$ and $\eta$, let $\alpha=\pm|\xi|$ and $\beta=\pm|\eta|$, we reduce to an integral of form\[\int_{\mathbb{R}}e^{is\widetilde{\Phi}(\alpha,\beta)}{m}(\xi,\eta)\widehat{F}(\xi-\eta)\widehat{G}(\eta)\,\mathrm{d}\beta,\] where $\xi$ and $\eta$ are functions of $\alpha$ and $\beta$ respectively, and \[\widetilde{\Phi}(\alpha,\beta)=\Phi^{+}(\alpha,\beta)+O(2^{-(1-o)m});\] for notations see Section \ref{phaseprop0}. Now we choose a parameter $\rho\leq (1-o)m$, and consider the region where $|\Phi^{+}(\alpha,\beta)|\sim 2^{-\rho}$, or when $|\Phi^{+}(\alpha,\beta)|\lesssim 2^{-(1-o)m}$; in the first situation we integrate by parts in $s$, producing the $I_{0}$, $I_{1}$ and $I_{2}$ terms, in the second situation we will estimate the integral $I$ directly. In estimating all these terms we shall use Schur's lemma.

For the term with $|\Phi_{+}(\alpha,\beta)|\lesssim 2^{-(1-o)m}$, we will use Propositions \ref{linearbound0} to bound \[|\widehat{F}|\lesssim \varepsilon_{1};\qquad\sup_{\eta_{e}}\|\widehat{G}\|_{L_{\beta}^{2}}\lesssim 2^{-(5/18-6\delta)m}\varepsilon_{1},\] then use Propositions \ref{phaseprop0} and to bound\[\sup_{\alpha}\int_{\mathbb{R}}\mathbf{1}_{E}(\alpha,\beta)\,\mathrm{d}\beta\lesssim 2^{-(1/3-o)m};\qquad \sup_{\beta}\int_{\mathbb{R}}\mathbf{1}_{E}(\alpha,\beta)\,\mathrm{d}\alpha\lesssim 2^{-(1-o)m}2^{r},\] using the fact that $|\partial_{\alpha}\Phi_{+}|\sim 2^{-r}$, where $E$ is the set where $|\Phi_{+}(\alpha,\beta)|\lesssim 2^{-(1-o)m}$. Putting these into Schur's lemma, and considering that $\eta_{e}$ ranges in a ball centered at $\xi_{e}$ with radius $2^{-(1-o)m/2}$, we obtain\[\varepsilon_{1}^{-2}\|I\|_{L^{2}}\lesssim 2^{m}2^{-(1-o)m}2^{-(5/18-6\delta)m}(2^{-(1/3-o)m})^{1/2}(2^{-(1-o)m}2^{r})^{1/2}\lesssim 2^{-(17/18-6\delta-o)m}2^{r/2},\] which is bounded above by $2^{-(17/18-6\delta-o)j}$.

For the term $I_{0}$, the estimate is the same as above; simply replace $2^{-(1-o)m}$ by $2^{-\rho}$ and repeat the argument above. For $I_{1}$ and $I_{2}$, we shall estimate them in the same way as $I_{0}$, but with the additional time integral which counts as $2^{m}$; however, for the term with $\partial_{s}$ derivative, we have from (\ref{linearb5}) that\[\|\partial_{t}G\|_{L^{2}}\lesssim 2^{-(3/2-o)m}\varepsilon_{1}^{2},\qquad\|\widehat{\partial_{t}F}\|_{L^{\infty}}\lesssim 2^{-(1-12\delta-o)m}\varepsilon_{1}^{2},\] which is almost $2^{m}$ better than the corresponding $G$ and $F$ factors, so this cancels the time integration.

Finally, considering the Fourier $L^{1}$ bound, we may assume $j_{2}\leq m/2$, otherwise it would directly follow from the $L^{2}$ bound we proved above (where for $\|G\|_{L^{2}}$ we have a bound of $2^{-5m/12}\varepsilon_{1}$ instead of $2^{-5m/18}\varepsilon_{1}$). Now since $j_{2}\leq m/2$, we can assume $|\nabla_{\eta}\Phi|\lesssim 2^{-m/3}$, since otherwise we can integrate by parts in $\eta$ (or equivalently $\beta$) to get $2^{-20m}$ decay; also we can assume $|\Phi(\xi,\eta)|\lesssim 2^{-m/3}$, since otherwise we can integrate by parts in $s$, using the trick in (\ref{trick}) and the bound $\|\partial_{t}G\|_{L^{2}}\lesssim 2^{-(3/2-o)m}\varepsilon_{1}^{2}$ to close. Now using (\ref{qrad}) and (\ref{a33}), we can restrict $\xi$ to a region of volume $\lesssim 2^{-m/6}$, so we use the $L^{2}$ bound obtained above and H\"{o}lder to obtain a good $L^{1}$ bound.
\subsubsection{The case of small $j$'s}\label{smallj} Assume $j'=\max(j_{1},j_{2})\prec m/3$. In particular we know that $|\nabla_{\eta}\Phi|\ll 1$, so we are close to one of the spacetime resonance spheres described in Proposition \ref{propp}, say $(\alpha_{0},\beta_{0})$.

Recall that $|\sin\angle(\xi,\eta)|\lesssim 2^{-(1/2-o)m}$; in the discussion below we will further assume $|\partial_{\beta}^{2}\Phi^{+}|\ll 1$, where $\alpha=\pm|\xi|$ and $\beta=\pm|\eta|$. In fact, when $|\partial_{\beta}^{2}\Phi^{+}|\gtrsim 1$ we have a non-degenerate oscillatory integral (which is exactly the case in \cite{IP}), so for example we have $|\beta-R_{i}(\alpha)|\lesssim 2^{-(1/2-o)m}$ (instead of $2^{-m/3}\langle m\rangle^{A}$), and the proof will be completed in the same way as below, and every estimate will be strictly better.

 Now we will fix a time $s$, and restrict $||\xi|-\alpha_{0}|\sim 2^{-2l}$ with $0\leq l\leq m/2$, or $||\xi|-\alpha_{0}|\lesssim 2^{-m}$, using suitable cutoff functions. We are thus considering the integral\[H(s,\xi)=\int_{\mathbb{R}^{3}}e^{{i}s\Phi(\xi,\eta)}{m}(\xi,\eta)\widehat {F}(s,\xi-\eta)\widehat{G}(s,\eta)\,\mathrm{d}\eta.\]

As the first step, we can use Lemma \ref{angular} to restrict $|\sin\angle(\xi,\eta)|\lesssim 2^{-m/2}\langle m\rangle^{A}$. We then fix the direction vectors $\xi_{e}$ and $\eta_{e}$, noticing that $\eta_{e}$ stays in a set with volume $2^{-m}\langle m\rangle^{A}$ with fixed $\xi_{e}$, and let $\alpha=\pm|\xi|$ and $\beta=\pm|\eta|$. We then have $\Phi(\xi,\eta)=\Phi^{+}(\alpha,\beta)+O(2^{-m}\langle m\rangle^{A})$; with suitable choice of $A$, this remainder will be safely ignored. Below we will consider the integral\[\int_{\mathbb{R}}e^{is\Phi^{+}(\alpha,\beta)}{m}(\xi,\eta)\widehat{F}(s,\xi-\eta)\widehat{G}(s,\eta)\,\mathrm{d}\beta.\]

Next, recall that\[\partial_{\beta}\Phi^{+}(\alpha,\beta)=P(\alpha,\beta)\cdot (\beta-R_{1}(\alpha))\cdot[(\beta-R_{2}(\alpha))^{2}-Q(\alpha)],\] by our assumptions, we may assume that $\beta$ is to $R_{3}(\alpha)$ (the case of $R_{4}$ is similar).

If we restrict $|\beta-R_{3}(\alpha)|\gtrsim 2^{-\mu}$ by inserting some cutoff function, where $\mu\prec \max(\frac{m}{3},\frac{m-l}{2})$, then we have, by elementary calculus, that $|\partial_{\eta_{1}}\Phi|\gtrsim 2^{-\min(2\mu',\mu'+l)}$ and $|\partial_{\eta_{1}}^{2}\Phi|\lesssim 2^{-\min(\mu',l)}$. We then set in Proposition \ref{ips0} that \[K=2^{m},\quad n=2,\quad \epsilon_{1}\sim2^{-\min(2\mu',\mu'+l)},\quad\epsilon_{2}\sim2^{-\min(\mu',l)},\quad\lambda\sim2^{\max(\mu,j')},\] and check that the choice of parameters (in particular the choice that $\mu'\prec\max(m/3,(m-l)/2)$) guarantees sufficient decay.

Now, suppose $|\beta-R_{3}(\alpha)|\lesssim 2^{-\mu_{0}}\langle m\rangle^{A}$, where $\mu_{0}=\max(\frac{m}{3},\frac{m-l}{2})$, then we have\[|\Phi_{+}(\alpha,\beta)-\Phi_{+}(\alpha,R_{3}(\alpha))|\lesssim 2^{-m}\langle m\rangle^{A},\] which can be checked using the expressions of $\Phi_{+}$.
Using the support of $\beta$ and $\eta_{e}$, we can already bound $\|\widehat{I}\|_{L^{\infty}}\lesssim 2^{-m/3}\langle m\rangle^{A}\varepsilon_{1}^{2}$. To bound the $L^{2}$ and Fourier $L^{1}$ norms we let\[\lambda=(\partial_{\alpha}\Phi)(\alpha_{0},R_{2}(\alpha_{0}))\] as in Proposition \ref{propp}, we will consider the case when $\lambda\neq 0$ and when $\lambda=0$.

(1) Suppose $\lambda\neq 0$. If we assume $|\alpha-\alpha_{0}|\lesssim 2^{-2l}$ with $l\succeq m/2$, then we directly use the Fourier $L^{\infty}$ bound obtained above and the smallness of support to conclude; therefore we may assume $||\xi|-\alpha_{0}|\sim 2^{-2l}$ with $l\prec m/2$. From part (5) of Proposition \ref{propp}, we have that $|\Phi_{+}(\alpha,R_{3}(\alpha))|\sim 2^{-2l}$, while\[|\Phi(\xi,\eta)-\Phi_{+}(\xi,R_{3}(|\xi|))|\lesssim 2^{-m}\langle m\rangle^{A},\] so we have that $|\Phi(\xi,\eta)|\sim 2^{-2l}$, thus we will integrate by parts in $s$. The boundary term $I_{0}$ can be estimated in $L^{2}$ norm (using H\"{o}lder) by $2^{\kappa}\varepsilon_{1}^{2}$, where\[\kappa\preceq A\log m-l+2l-\frac{4m}{3}\preceq-\frac{5j}{6}\] (notice that the $2^{-4m/3}$ factor is due to the support of $\beta$ and $\eta_{e}$ together), and the Fourier $L^{1}$ norm would be bounded by $2^{\kappa'}$, where\[\kappa'\preceq A\log m-2l+2l-\frac{4m}{3}\preceq -5j/4,\] which will be acceptable. As for $I_{1}$ ($I_{2}$ is the same), we will have one more time integration, but the Fourier $L^{\infty}$ norm of $\partial_{s}F$ (or $\partial_{s}G$) will be $2^{m}$ better than the corresponding function themselves ($F$ or $G$) due to (\ref{linearb5}), so the proof will go exactly the same way.

(2) Suppose $\lambda=0$. We may again assume $||\xi|-\alpha_{0}|\sim 2^{-2l}$ with $l\prec m/2$; we next consider the case when\[j\succ\max(2l,m-l,m-\mu_{0},l+\mu_{0})=\max(2l,m-l)\] where we recall $\mu_{0}=\max(m/3,(m-l)/2)$. We then fix $\eta$ and $s$, and repeat the argument of integrating by parts in $\xi$ as before; for a fixed point $x$ with $|x|\sim 2^{j}$, we analyze the inegral\begin{equation}\int_{\mathbb{R}^{3}}e^{{i}(s\Phi+x\cdot\xi)}\chi\bigg(2^{n_{1}}\big(\eta-\frac{\eta\cdot\xi}{|\xi|^{2}}\xi\big)\bigg)\chi\bigg(2^{\mu}\big(\frac{\eta\cdot\xi}{|\xi|}-R_{3}(|\xi|)\big)\bigg)\widehat{\widetilde{f}}(\xi-p_{3}(\xi))\widehat{\widetilde{g}}(p_{3}(\xi))\,\mathrm{d}\xi\nonumber.\end{equation} Note that with the cutoff functions, we have\[|\nabla_{\xi}\Phi(\xi,\eta)|\lesssim 2^{-\mu}+|\nabla_{\xi}\Phi(\xi,p_{3}(\xi))|\lesssim 2^{-\mu}+2^{-l},\] the last inequality following easily from the proof of Proposition \ref{propp}. Now we can set in Proposition \ref{ips0} that $K=2^{m}$, $\epsilon=2^{j-m}$, $k=1$, $L=2^{100M}$ and $\lambda=\max(j',l+\mu,n_{1})$ to obtain a decay of $2^{-100M}\varepsilon_{1}^{2}$. Note that taking each derivative of the cutoff function \[\chi\bigg(2^{\mu}\big(\frac{\eta\cdot\xi}{|\xi|}-R_{3}(|\xi|)\big)\bigg)\] may cost us up to $2^{\max(2l,l+\mu)}$; this can be proved using the same argument as the proof of Proposition \ref{ips0}.

Now we may assume $j\preceq\max(2l,m-l)$. Using this and the bound for $\varepsilon_{1}^{-2}\|\widehat{I}\|_{L^{\infty}}$ we already have, we can bound $\varepsilon_{1}^{-2}\|I\|_{L^{2}}$ \emph{without integrating by parts in $s$} by $2^{\kappa}$, where\[\kappa\preceq m-l-4m/3\preceq -5l/3\preceq -5j/6\] when $l\geq m/3$ (the Fourier $L^{1}$ bound follows from H\"{o}lder), and\[\kappa\preceq m-l-4m/3\preceq -5(m-l)/6\preceq -5j/6\] when $3m/11\leq l\leq m/3$ (the Fourier $L^{1}$ bound also follows from H\"{o}lder). Finally when $l\leq 3m/11$, we note that $|\Phi|\sim 2^{-3l}$ so we can intgrate by parts in $s$ to bound $\varepsilon_{1}^{-2}\|I\|_{L^{2}}$ by $2^{\max(\kappa_{1},\kappa_{2})}$, where\[\kappa_{1}\preceq3l-l-4m/3\preceq -(m-l)-m/10\preceq -j-m/10,\] and\[\kappa_{2}\preceq m+3l-l-(4m/3)-m\preceq -j-m/10.\]
\subsection{Low frequency output}\label{lowout} In this section we assume $k\leq -K_{0}^{2}$. As before, we can now exclude the cases when $j\succ m$, or when $\min(j_{1},j_{2})\succeq m$; we may then assume $|\sin\angle(\xi,\eta)|\lesssim 2^{-(m+k)/2}$ due to Lemma \ref{angular}. The rest of the proof then goes in the same way as in Section \ref{medout}, making necessary changes due to the fact that $|\xi|\ll 1$.  We will only sketch the arguments here.

To be precise, if $j_{1}\geq (1/2-o)m$, we can first fix the angle $\angle(\xi,\eta)$, then reduce to a one-dimensional integral as before (note that $|\partial_{\alpha}\Phi^{+}|\gtrsim 1$ here), so that we can use Schur's test and integration by parts in time to obtain $\|I\|_{L^{2}}\lesssim 2^{-\kappa}\varepsilon_{1}^{2}$, where\[\kappa\leq k+\rho-(m+k)-\rho/2-(\rho/2)/2-5/6(1/2-o)m\leq -(1+o)m,\] where $\rho$ is such that $|\Phi^+(\alpha,\beta)|\sim 2^{\rho}$ as in Section \ref{easy}, and the first $2^{k}$ factor is due to the fact that\[\|\widehat{I}\|_{L^{2}}\lesssim 2^{k}\sup_{\theta\in\mathbb{S}^{2}}\|\widehat{I}(\alpha\theta)\|_{L_{\alpha}^{2}},\] when $\widehat{I}$ is supported where $|\xi|\sim 2^{k}$.

Now let us assume $\max(j_{1},j_{2})\leq(1/2-o)m$. Again we must have $|\sin\angle(\xi,\eta)|\lesssim 2^{-(m+k)/2}$, and again the relation\[\Phi(\xi,\eta)=\Phi^{+}(\alpha,\beta)+O(2^{-(1-o)m})\] holds. After fixing the directions of $\xi$ and $\eta$ and reducing to the one-dimensional integral, notice that when $|\Phi^{+}|+|\partial_{\beta}\Phi^{+}|\ll 1$ we have $|\partial_{\beta}^{2}\Phi^{+}|\gtrsim 1$ by Proposition \ref{propp}, thus under the condition $\max(j_{1},j_{2})\leq (1/2-o)m$ we can restrict $|\beta-p(\alpha)|\lesssim 2^{-(1/2-o)m}$ where $\beta=p(\alpha)$ is the unique solution to $\partial_{\beta}\Phi_{+}(\alpha,\beta)=0$, thus obtaining $\|\widehat{I}\|_{L^{\infty}}\lesssim 2^{-(1/2-o)m}\varepsilon_{1}^{2}$ in exactly the same way as Section \ref{smallj}, and again we have \[\Phi_{+}(\alpha,\beta)=\Phi^{+}(\alpha,p(\alpha))+O(2^{-(1-o)m});\] notice that $|\partial_{\alpha}\Phi^{+}(\alpha,p(\alpha))|\gtrsim 1$, we can then analyze the size of $|\Phi^{+}(\alpha,p(\alpha))|$ and integrate by parts in $s$ (and use (\ref{linearb5}) to bound the $L^{\infty}$ norm of $\widehat{\partial_{s}F}$ and $\widehat{\partial_{s}G}$) when necessary, as we did in Section \ref{medout} above. Compared with that case, the $L^{\infty}$ bound here is better, and the volume bound for $\{\alpha: |\Phi^{+}(\alpha,p(\alpha))|\sim 2^{-\rho}\}$ is also better with fixed $\rho$. This completes the proof.

\section{High frequency case}\label{high} In this section we assume $\overline{k}:=\max(k,k_{1},k_{2})\geq K_{0}$. By Proposition \ref{propp}, we have either $|\Phi|\gtrsim 2^{-2\overline{k}}$, or $\min(k,k_{1},k_{2})\geq -D_{0}$. In the latter case we also have that, either $c_{\sigma}=c_{\mu}=c_{\nu}$ and $b_{\sigma}-b_{\mu}-b_{\nu}=0$, or $|\nabla_{\eta}\Phi|\gtrsim 2^{-4\overline{k}}$.

Now, if $|\Phi|\gtrsim 2^{-2\overline{k}}$, we simply integrate by parts in $s$ and estimate the corresponding $I_{0}$, $I_{1}$ and $I_{2}$ terms as before; for example by a variant of (\ref{basiccon}) we have\[\|I_{0}\|_{L^{2}}\lesssim 2^{3\overline{k}}\cdot 2^{6\overline{k}}2^{-9m/8}\varepsilon_{1}^{2}\lesssim 2^{-(1+o)m}\varepsilon_{1}^{2}\] and\[\|I_{1}\|_{L^{2}}\lesssim 2^{m+3\overline{k}}\cdot 2^{-(1-o)m}2^{-9m/8}\varepsilon_{1}^{2}\lesssim 2^{-(1+o)m}\varepsilon_{1}^{2}.\] If $|\Phi|\lesssim 2^{-2\overline{k}}$ and $|\nabla_{\eta}\Phi|\gtrsim 2^{-4\overline{k}}$, then we may assume $j_{1}\geq (1-o)m-4\overline{k}$, thus we can use Lemma \ref{angular} to restrict $|\sin\angle (\xi,\eta)|\lesssim 2^{-(1/2-o)m}$; we then insert cutoff functions to restrict $|\Phi|\sim 2^{-\gamma}$, integrate by parts in $s$, and use Schur's lemma as before, to obtain $\|I\|_{L^{2}}\lesssim 2^{\max(\kappa_{0},\kappa_{1})}\varepsilon_{1}^{2}$, where\[\kappa_{0}=\gamma-(1-o)m+8\overline{k}-\gamma-5m/6\leq -(1+o)m\] and\[\kappa_{1}=\gamma-(1-o)m+8\overline{k}-\gamma-9m/8\leq -(1+o)m.\] Finally, when $c_{\sigma}=c_{\mu}=c_{\nu}=1$ (say) and $b_{\sigma}-b_{\mu}-b_{\nu}=0$, then we have $|\Phi|\gtrsim 2^{-2\overline{k}}$ so we can argue as above, unless $|\xi|,|\eta|,|\xi-\eta|\sim 2^{k}$, in which case we have \[|\nabla_{\xi}\Phi|\sim|\nabla_{\eta}\Phi|\sim 2^{-3k}|\eta-\rho\xi|\] for some $\rho$, as in Proposition \ref{propp}. Using the same integration by parts argument (in $\xi$ or $\eta$) as above, we may assume that $j\preceq m-3k-l\preceq\max(j_{1},j_{2})$, where $|\eta-\rho\xi|\sim 2^{l}$; moreover we can assume $|\sin\angle (\xi,\eta)|\lesssim 2^{-(1/2-o)m}$ by Lemma \ref{angular}, so we may insert cutoff functions to restrict $|\Phi|\sim 2^{-\gamma}$, then exploit the localization in the angular variable to reduce to one-dimensional integral, and then integrate by parts in $s$ so get that $\|I\|_{L^{2}}\lesssim 2^{\max(\kappa_{0},\kappa_{1})}\varepsilon_{1}^{2}$ where\[\kappa_{0}=\gamma-(1-o)m+(3k+l)-\gamma-\max(j_{1},j_{2})\leq -3j/2,\]\[\kappa_{1}=\gamma+m-(1-o)m+(3k+l)-\gamma-9m/8\leq -(1+1/20)j,\] so in any case we get the desired estimate.
\section{Auxiliary results}\label{phaseprop}
\subsection{Properties of phase functions}\label{phaseprop0} Let $\Phi=\Phi_{\sigma\mu\nu}$ and $\Phi^{+}$ be defined as in Section \ref{notatkg}. Define the spacetime resonance set\begin{equation}\label{stres1}\mathcal{R}=\{(\xi,\eta):\Phi(\xi,\eta)=\nabla_{\eta}\Phi(\xi,\eta)=0\},\end{equation}and also assume \[|\xi|\sim 2^{k},\quad|\xi-\eta|\sim 2^{k_{1}},\quad|\eta|\sim 2^{k_{2}};\quad\max(k,k_{1},k_{2}):=\overline{k}.\]
\begin{proposition}\label{propp}
(1) Suppose $|\overline{k}|\leq K_{0}^{2}$ and $(\xi,\eta)=(\alpha e,\beta e)$ for some $e\in\mathbb{S}^{2}$, then we have\begin{equation}\label{derbound}\sup_{\mu\leq 3}|\partial_{\beta}^{\mu}\Phi^{+}(\alpha,\beta)|\gtrsim 1,\end{equation} and the same holds for $\partial_{\alpha}$. If moreover $\min(k,k_{1},k_{2})\leq -D_{0}$, then the range $\mu\leq 3$ above can be improved to $\mu\leq2$. Note that (\ref{derbound}) implies the measure bound \begin{equation}\label{measureb}\sup_{\alpha}|\{\beta:|\Phi(\alpha,\beta)|\leq\epsilon\}|\lesssim\epsilon^{1/3}\end{equation} for bounded $\alpha,\beta$ by well-known results; see for example \cite{DIP14}, Section 8.

(2) If $c_{\sigma}$, $c_{\mu}$ and $c_{\nu}$ are not all equal, then the set\begin{equation}\label{stres2}\mathcal{R}=\{(\alpha \theta,\beta \theta):\theta\in\mathbb{S}^{2},(\alpha,\beta)\in R\},\end{equation} where $R\subset\mathbb{R}^{2}$ is a \emph{finite} set. If $c_{\sigma}=c_{\mu}=c_{\nu}$, then if $b_{\sigma}-b_{\mu}-b_{\nu}\neq 0$ then $\mathcal{R}=\emptyset$; otherwise $\mathcal{R}=\{(\xi,\rho\xi)\}$, where $\rho=b_{\nu}/b_{\sigma}$. In this case we have\[|\nabla_{\eta}\Phi|\sim (1+2^{5k_{1}})^{-1}|\eta-\rho\xi|\] if $|k_{1}-k_{2}|=O(1)$, and $|\nabla_{\eta}\Phi|\gtrsim 2^{-2\overline{k}}$ otherwise.

(3) Suppose $k\geq \max(k_{1}+D_{0},D_{0}^{2}/2)$, then we have $|\Phi|+|\nabla_{\eta}\Phi|\gtrsim 2^{-2k_{1}}$; if $|k_{1}-k_{2}|\leq 2D_{0}$ and $k_{1}\geq D_{0}^{2}/2$, then either $|\Phi|+|\nabla_{\eta}\Phi|\gtrsim 2^{-4k_{1}}$, or $c_{\sigma}=c_{\mu}=c_{\nu}$ and $b_{\sigma}-b_{\mu}-b_{\nu}=0$.

(4) For $\alpha\neq 0$, there exist smooth functions $R_{1}(\alpha)$ and $R_{2}(\alpha)$, strictly positive functions $P(\alpha,\beta)$, and a function $Q(\alpha)$ which has only simple zeros, such that\begin{equation}\label{qrad}\partial_{\beta}\Phi^{+}(\alpha,\beta)=\pm P(\alpha,\beta)\cdot (\beta-R_{1}(\alpha))\cdot[(\beta-R_{2}(\alpha))^{2}-Q(\alpha)].\end{equation} Moreover, near each point where $Q(\alpha)=0$, we have that $R_{1}(\alpha)-R_{2}(\alpha)$ is bounded away from zero. We will also define \[R_{3}(\alpha)=R_{2}(\alpha)+\sqrt{|Q(\alpha)|},\,\,\,\,\,\,R_{4}(\alpha)=R_{2}(\alpha)-\sqrt{|Q(\alpha)|}.\]

(5) Suppose at some point $\alpha_{0}$ we have $Q(\alpha_{0})=0=\Phi_{+}(\alpha_{0},R_{2}(\alpha_{0}))$. Let also $\lambda=(\partial_{\alpha}\Phi)(\alpha_{0},R_{2}(\alpha_{0}))$, then we have \begin{equation}\label{a33}\Phi^{+}(\alpha,R_{3}(\alpha))=\lambda(\alpha-\alpha_{0})+\lambda'|\alpha-\alpha_{0}|^{\frac{3}{2}}+O(|\alpha-\alpha_{0}|^{2}),\end{equation} where $\lambda'$ may take different values depending on whether $\alpha>\alpha_{0}$ or $\alpha<\alpha_{0}$, but is always nonzero; the same thing happens for $R_{4}$. Moreover, when $\lambda=0$, we also have \begin{equation}\label{more}|\partial_{\alpha}(\Phi^{+}(\alpha,R_{3}(\alpha)))|\sim |\alpha-\alpha_{0}|^{1/2}.\end{equation}
\end{proposition}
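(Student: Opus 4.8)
The plan is to prove the five assertions essentially independently, each reducing to an explicit computation with the elementary functions $\Lambda_{|\tau|}(x)=\sqrt{c_\tau^2x^2+b_\tau^2}$. The recurring tools are the closed forms $\Lambda_{|\tau|}'(x)=c_\tau^2x/\Lambda_{|\tau|}(x)$, $\Lambda_{|\tau|}''(x)=c_\tau^2b_\tau^2/\Lambda_{|\tau|}(x)^3>0$, $\Lambda_{|\tau|}'''(x)=-3c_\tau^4b_\tau^2x/\Lambda_{|\tau|}(x)^5$; the small–frequency expansion $\Lambda_{|\tau|}(x)=b_\tau+\tfrac{c_\tau^2}{2b_\tau}x^2+O(x^4)$ and the large–frequency expansion $\Lambda_{|\tau|}(x)=c_\tau|x|+\tfrac{b_\tau^2}{2c_\tau|x|}+O(|x|^{-3})$; the positivity of all $b_\tau,c_\tau$; and a compactness argument upgrading ``no simultaneous vanishing'' to a uniform lower bound on any frequency–bounded region (all implicit constants are allowed to depend on $K_0$ and $\mathcal B$, which are fixed). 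Throughout, signs $\varepsilon_\tau=\operatorname{sign}\tau$ are folded in via $\Lambda_{-\tau}=-\Lambda_\tau$.

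For part (1) I would argue by contradiction: suppose $\Phi^+,\partial_\beta\Phi^+,\partial_\beta^2\Phi^+,\partial_\beta^3\Phi^+$ are all $o(1)$ at a point $(\alpha,\beta)$ with $|\alpha|,|\beta|\lesssim 2^{K_0^2}$. When the two $\beta$–carrying summands of $\Phi^+$ have the same sign one has $|\partial_\beta^2\Phi^+|=\Lambda_{|\mu|}''(\alpha-\beta)+\Lambda_{|\nu|}''(\beta)\gtrsim 1$ at once, so assume opposite signs; then $\partial_\beta^2\Phi^+$ can vanish, but $\partial_\beta^3\Phi^+=\pm(\Lambda_{|\mu|}'''(\alpha-\beta)+\Lambda_{|\nu|}'''(\beta))$ and the sign of $\Lambda_{|\tau|}'''$ forces this to be nonzero unless $\alpha-\beta$ and $\beta$ have opposite signs, in which case the only way for all four quantities to be small is a configuration in which cancellation of the mass–independent parts makes $\Phi^+$ within $o(1)$ of $b_\sigma\neq 0$, a contradiction. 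For the improved range $\mu\le 2$ when $\min(k,k_1,k_2)\le -D_0$: one of $\xi,\xi-\eta,\eta$ then has magnitude $\le 2^{-D_0}$, so one $\Lambda$–term is constant up to $o(1)$ and all its $\beta$–derivatives of order $\ge 1$ are $o(1)$; hence for $\mu\ge 1$, $\partial_\beta^\mu\Phi^+$ equals, up to $o(1)$, the contribution of the non–small inputs alone, which does not degenerate past second order unless again $\Phi^+\approx b_\sigma$. The measure bound (\ref{measureb}) follows from (\ref{derbound}) by the standard van der Corput sublevel–set estimate, as in \cite{DIP14}. Part (3) runs the same scheme with the large–frequency expansion: $|\nabla_\eta\Phi|$ is small only on a thin set where $\xi,\xi-\eta,\eta$ are nearly parallel and $|\xi-\eta|$ is pinned to a fixed constant (forcing $c_\mu>c_\nu$), and on that set the leading part of $\Phi$ is $(c_\sigma-c_\nu)|\xi|+(c_\nu-c_\mu)|\xi-\eta|$, which is bounded below unless $c_\sigma=c_\nu$; pushing this through, the only genuinely degenerate configuration is $c_\sigma=c_\mu=c_\nu$ with $b_\sigma-b_\mu-b_\nu=0$. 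The bookkeeping of the exact thresholds $2^{-2k_1}$ and $2^{-4k_1}$ across all sign choices for $\sigma,\mu,\nu$ and all relative sizes of $k,k_1,k_2$ is the most laborious part, and I expect it to be the main obstacle.

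For part (2), $\nabla_\eta\Phi=\varepsilon_\mu c_\mu^2(\xi-\eta)/\Lambda_{|\mu|}(\xi-\eta)-\varepsilon_\nu c_\nu^2\eta/\Lambda_{|\nu|}(\eta)$, so $\nabla_\eta\Phi=0$ forces $\xi-\eta$ and $\eta$ to be parallel, hence $\xi,\eta,\xi-\eta$ all parallel; writing $(\xi,\eta)=(\alpha e,\beta e)$ reduces $\mathcal R$ to the solution set in $(\alpha,\beta)\in\mathbb R^2$ of $\{\partial_\beta\Phi^+=0,\ \Phi^+=0\}$, which is automatically of the form (\ref{stres2}). If the $c_\tau$ are not all equal, clearing radicals in both equations yields polynomials in $(\alpha^2,\beta^2)$ whose top–degree parts, governed by the distinct $c_\tau^2$, share no common factor, so the system has only finitely many solutions, i.e. $R$ is finite. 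If $c_\sigma=c_\mu=c_\nu=c$, the magnitude part of $\nabla_\eta\Phi=0$ reads $|\xi-\eta|/|\eta|=b_\mu/b_\nu$, which with the parallelism gives $\eta=\tfrac{b_\nu}{b_\mu+b_\nu}\xi$; substituting into $\Phi=0$ and using $\Lambda_\mu(\tfrac{b_\mu}{b_\mu+b_\nu}\xi)+\Lambda_\nu(\tfrac{b_\nu}{b_\mu+b_\nu}\xi)=\sqrt{c^2|\xi|^2+(b_\mu+b_\nu)^2}$ forces $b_\sigma=b_\mu+b_\nu$, whence either $\mathcal R=\emptyset$ or $\mathcal R=\{(\xi,\rho\xi)\}$ with $\rho=b_\nu/b_\sigma$. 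On that line $\Phi(\xi,\rho\xi)\equiv 0$, and differentiating in $\xi$ together with $\nabla_\eta\Phi\equiv 0$ gives $\nabla_\xi\Phi\equiv 0$ along it as well; hence $\nabla_\eta\Phi$ vanishes to exactly first order transversally, and a direct computation of the (anisotropic, radially weakest) transverse Hessian yields $|\nabla_\eta\Phi|\sim(1+2^{5k_1})^{-1}|\eta-\rho\xi|$ when $|k_1-k_2|=O(1)$ and $|\nabla_\eta\Phi|\gtrsim 2^{-2\overline k}$ otherwise.

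For parts (4) and (5) I would work with $\partial_\beta\Phi^+(\alpha,\cdot)$ for fixed $\alpha\neq0$. Rationalizing $\partial_\beta\Phi^+=0$ (multiply by $\Lambda_{|\mu|}(\alpha-\beta)\Lambda_{|\nu|}(\beta)>0$, then by the conjugate) shows that the zeros of $\partial_\beta\Phi^+$ coincide with those of an explicit polynomial of degree $\le 3$ in $\beta$ with positive leading coefficient, whose roots one organizes as $R_1(\alpha)$ and $R_2(\alpha)\pm\sqrt{|Q(\alpha)|}$, giving (\ref{qrad}) with $P>0$. A triple root would mean $\partial_\beta\Phi^+=\partial_\beta^2\Phi^+=\partial_\beta^3\Phi^+=0$ there, which by part (1) can occur only where $|\Phi^+|\gtrsim 1$, hence never on the spacetime resonance set; so the only multiple roots are double, $Q$ has only simple zeros, and near them $R_1-R_2$ stays away from $0$. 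For (5), set $F(\alpha)=\Phi^+(\alpha,R_3(\alpha))$; on the side where $Q(\alpha)\ge0$ one has $\partial_\beta\Phi^+(\alpha,R_3(\alpha))\equiv0$, so $F'(\alpha)=(\partial_\alpha\Phi^+)(\alpha,R_3(\alpha))$ and thus $F(\alpha_0)=0$, $F'(\alpha_0)=\lambda$. Differentiating $\partial_\beta\Phi^+(\alpha,R_3(\alpha))=0$ and using $R_3-R_2=\sqrt{|Q|}$ with $Q$ having a simple zero (so $R_3'(\alpha)\sim|\alpha-\alpha_0|^{-1/2}$ while $\partial_\beta^2\Phi^+(\alpha,R_3(\alpha))=\pm 2P(R_3-R_1)\sqrt{|Q|}\sim|\alpha-\alpha_0|^{1/2}$) shows $\partial_\alpha\partial_\beta\Phi^+(\alpha_0,R_2(\alpha_0))\neq0$; hence $F''(\alpha)=\partial_\alpha^2\Phi^++\partial_\alpha\partial_\beta\Phi^+\cdot R_3'(\alpha)\sim|\alpha-\alpha_0|^{-1/2}$, and integrating twice gives (\ref{a33}) with $\lambda'\neq0$ (the half–integer power and the side–dependence of $\lambda'$ coming from the $\sqrt{|Q|}$ in the definition of $R_3$), while in the case $\lambda=0$ integrating once gives (\ref{more}); the case of $R_4$ is identical.
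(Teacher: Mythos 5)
Your overall strategy (direct computation with $\Lambda_{|\tau|}$, squaring to polynomials, compactness for uniform bounds) is the same as the paper's, but several key steps as written do not hold up. The most serious gap is in parts (4)--(5): your proof that $Q$ has only simple zeros is not valid. You argue that a triple root would force $\partial_\beta\Phi^+=\partial_\beta^2\Phi^+=\partial_\beta^3\Phi^+=0$, "which by part (1) can occur only where $|\Phi^+|\gtrsim 1$, hence never on the spacetime resonance set''. But simplicity of a zero of $Q$ is a statement about the $\alpha$-dependence ($Q(\alpha_0)=0$, $Q'(\alpha_0)\neq 0$), not about root multiplicity in $\beta$: $Q$ could have a double zero in $\alpha$ without any triple $\beta$-root, and part (4) is asserted at \emph{every} zero of $Q$, not only on $\mathcal{R}$, so part (1) (whose sup includes $\mu=0$) yields no contradiction off the resonance set. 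The paper gets this nondegeneracy from the elementary fact $\partial_\alpha\partial_\beta\Phi^+(\alpha,\beta)=\Lambda_{|\mu|}''(\alpha-\beta)>0$: when $Q(\alpha)=0$ and $\beta=R_2(\alpha)$ one has $\partial_\beta^2\Phi^+=0$, hence $\frac{d}{d\alpha}\big(\partial_\beta\Phi^+(\alpha,R_2(\alpha))\big)=\partial_\alpha\partial_\beta\Phi^+\neq 0$, which forces $Q'(\alpha)\neq0$. In your part (5) you instead \emph{derive} $\partial_\alpha\partial_\beta\Phi^+\neq0$ from the simple zero of $Q$ and $R_3'\sim|\alpha-\alpha_0|^{-1/2}$, so the argument is circular given the gap; and the expansion (\ref{a33}) and the bound (\ref{more}) rest entirely on this unproven fact. (The fix is one line---compute the mixed derivative directly---but as written the central nondegeneracy of (4)--(5) is unsupported.)

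There are secondary gaps as well. In the improved range of (1), the claim that if one frequency is $\leq 2^{-D_0}$ then "all its $\beta$-derivatives of order $\geq1$ are $o(1)$'' fails at second order, since $\Lambda_{|\tau|}''(0)=c_\tau^2/b_\tau$ is not small; worse, your fallback "$\Phi^+\approx b_\sigma$'' is the wrong constant when all three frequencies are small: there $\Phi^+\approx \pm b_\sigma\mp b_\mu\mp b_\nu$, which can vanish, and $\partial_\beta^2\Phi^+(0,0)$ can vanish simultaneously (this is exactly the degenerate configuration of the introduction, $\sqrt{|\xi|^2+1}-\sqrt{2|\xi-\eta|^2+4}+\sqrt{|\eta|^2+1}$), so your reduction breaks precisely in the regime the improved statement must cover; the paper instead reduces to exact simultaneous vanishing on a region bounded away from the origin and excludes it via part (2) together with the two-equation algebra at $\alpha=0$. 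In (3) you explicitly defer the quantitative thresholds $2^{-2k_1}$ and $2^{-4k_1}$, which are the actual content of that part. Finally, in (2) your finiteness argument via "top-degree parts share no common factor'' fails when, e.g., $c_\sigma=c_\mu\neq c_\nu$: both cleared top forms are then divisible by $\beta$; the conclusion is still true, but one needs either to rule out a common component directly or to use the paper's device of parametrizing critical points by the common value $k=|\partial_\beta\Lambda_\mu(\alpha-\beta)|=|\partial_\beta\Lambda_\nu(\beta)|$, which turns $\Phi^+=0$ into a one-variable polynomial equation that is not an identity.
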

\begin{proof} (1) we only need to prove that there is no $(\alpha,\beta)$ where $\partial_{\beta}^{j}\Phi^{+}=0$ for all $j\leq 3$, unless $\alpha=\beta=0$. In fact, if $\partial_{\beta}^{j}\partial_{\beta}\Phi^{+}(\alpha,\beta)=0$ for $j\leq 2$, we must have $\mu\nu<0$, since the function $\sqrt{ax^{2}+b}$ is strictly convex; if we write $\beta-\alpha=\gamma$, we will obtain\begin{equation}\label{res01}\frac{c_{\nu}^{2}\beta}{\sqrt{c_{\nu}^{2}\beta^{2}+b_{\nu}^{2}}}=\frac{c_{\mu}^{2}\mu}{\sqrt{c_{\mu}^{2}\gamma^{2}+b_{\mu}^{2}}},\,\,\,\,\,\frac{c_{\nu}^{2}b_{\nu}^{2}}{(c_{\nu}^{2}\beta^{2}+b_{\nu}^{2})^{3/2}}=\frac{c_{\mu}^{2}b_{\mu}^{2}}{(c_{\mu}^{2}\gamma^{2}+b_{\mu}^{2})^{3/2}},\end{equation} and\begin{equation}\label{res02}\frac{c_{\nu}^{2}\beta}{(c_{\nu}^{2}\beta^{2}+b_{\nu}^{2})^{5/2}}=\frac{c_{\mu}^{2}\mu}{(c_{\mu}^{2}\gamma^{2}+b_{\mu}^{2})^{5/2}}.\end{equation} By elementary algebra, these three equations will force $c_{\nu}=c_{\mu}$, $b_{\tau}+b_{\sigma}=0$ and $\beta=\gamma$, but in this case we clearly have $\Phi^{+}(\alpha,\beta)\neq 0$.

For the case when $\min(k,k_{1},k_{2})\leq -D_{0}$, we again only need to show that there is no $(\alpha,\beta)$ where $\partial_{\beta}^{j}\Phi^{+}=0$ for all $j\leq 2$, and $\min(|\alpha|,|\beta|,|\alpha-\beta|)\leq 2^{-D_{0}}$. In fact, using part (2) below, we only need to consider spacetime resonance other than $(0,0)$ with $\alpha\beta(\alpha-\beta)=0$, unless $c_{\sigma}=c_{\mu}=c_{\nu}$ and $b_{\sigma}-b_{\mu}-b_{\nu}=0$ (in this latter case we must have $\beta=\rho\alpha$ using the notation in (2), in which case we can directly compute $\partial_{\beta}^{2}\Phi^{+}\neq 0$); clearly we cannot have $\beta=0$ or $\alpha-\beta=0$, and when $\alpha=0$ then $\beta=\gamma$ as above, so we can repeat the arguments to show that now $\partial_{\beta}\Phi^{+}=\partial_{\beta}^{2}\Phi_{+}=0$ implies $c_{\nu}=c_{\mu}$ and $b_{\nu}+b_{\mu}=0$, so that $\Phi^{+}(0,\beta)\neq 0$.

(2) If $\nabla_{\eta}\Phi=0$, then $\eta$ and $\xi-\eta$ must be collinear, so we have $\xi=\alpha \theta$ and $\eta=\beta\theta$, where $\theta\in\mathbb{S}^{2}$, and $\Phi^{+}(\alpha,\beta)=\partial_{\beta}\Phi^{+}(\alpha,\beta)=0$.

First assume $c_{\mu}\neq c_{\nu}$, then we have $|\partial_{\beta}\Lambda_{\mu}(\alpha-\beta)|=|\partial_{\beta}\Lambda_{\nu}(\beta)|:=k$, and hence\[\alpha-\beta=\frac{\pm b_{\mu}k}{c_{\mu}\sqrt{c_{\mu}^{2}-k^{2}}},\quad\Lambda_{\mu}(\alpha-\beta)=\frac{b_{\mu}c_{\mu}}{\sqrt{c_{\mu}^{2}-k^{2}}}\] and similarly for $\beta$, so we get\[\bigg(\frac{b_{\mu}c_{\mu}}{\sqrt{c_{\mu}^{2}-k^{2}}}+\frac{b_{\nu}c_{\nu}}{\sqrt{c_{\nu}^{2}-k^{2}}}\bigg)^{2}=c_{\sigma}^{2}\bigg(\frac{b_{\mu}k}{c_{\mu}\sqrt{c_{\mu}^{2}-k^{2}}}+\frac{b_{\nu}k}{c_{\nu}\sqrt{c_{\nu}^{2}-k^{2}}}\bigg)^{2}+b_{\sigma}^{2},\] which reduces to a polynomial equations in $k$ that is not an identity (since $c_{\mu}\neq c_{\nu}$), so we only have a finite number of $k$'s.

Now assume $c_{\mu}=c_{\nu}=1$, then with $\rho=b_{\nu}/b_{\sigma}$, it is easy to check that $\partial_{\beta}\Phi^{+}=0$ implies $\beta=\rho\alpha$. Plugging this into $\Phi^{+}=0$, we obtain an equation for $\alpha$ that is an identity when $c_{\sigma}=1$ and $b_{\sigma}-b_{\mu}-b_{\nu}=0$; thus this equation has at most two solutions when $c_{\sigma}\neq 1$, and no solution when $c_{\sigma}=1$ and $b_{\sigma}-b_{\mu}-b_{\nu}\neq0$. Finally the bound on $|\nabla_{\eta}\Phi|$ when $c_{\mu}=c_{\nu}$ follows from the fact that $\xi\mapsto \nabla\Lambda_{\mu}(\xi)$ is diffeomorphism whose Jacobian matrix has an inverse bounded by $(1+|\xi|)^{-3}$ pointwise.

(3)  In the first case, we may assume $c_{\sigma}=c_{\nu}$, otherwise we would have $|\Phi|\gtrsim 2^{k}$. Now if $c_{\sigma}=c_{\nu}=1$ and $c_{\mu}<1$, we would have \[|\nabla_{\eta}\Phi|\geq \frac{|\eta|}{\sqrt{|\eta|^{2}+b_{n\nu}^{2}}}-\frac{c_{\mu}^{2}|\xi-\eta|}{\sqrt{c_{\mu}^{2}|\xi-\eta|^{2}+b_{\mu}^{2}}}\geq\frac{1-c_{\mu}}{2};\] if $c_{\mu}>1$ we would have \[|\Phi|\geq \max(c_{\mu}|\xi-\eta|,|b_{\mu}|)-\big||\xi|-|\eta|\big|+O(2^{-k})\gtrsim 1.\] If $c_{\mu}=1$, we may directly compute that $|\nabla_{\eta}\Phi|\gtrsim 2^{-2k_{1}}$.

In the second case, assume $|\Phi|+|\nabla_{\eta}\Phi|\ll 2^{-4k_{1}}$, then we must have $c_{\mu}=c_{\nu}=1$, since when $c_{\mu}\neq c_{\nu}$ we have $|\nabla_{\eta}\Phi|\gtrsim 1$; this in turn implies $|\eta-\rho\xi|\lesssim 2^{-k_{1}}$ where $\rho=b_{\nu}/(b_{\mu}+b_{\nu})$ (or $|\xi|\lesssim 2^{-k_{1}}$ if $b_{\mu}+b_{\nu}=0$, which is easily seen to be impossible). Plugging into $|\Phi|\ll 2^{-4k_{1}}$, we see that the only possibility is $c_{\sigma}=1$ and $b_{\sigma}-b_{\mu}-b_{\nu}=0$.

(4) When $c_{\mu}=c_{\nu}$, the unique solution of $\partial_{\beta}\Phi^{+}(\alpha,\beta)=0$ is $\beta=\rho\alpha$ with $\rho=b_{\nu}/(b_{\mu}+b_{\nu})$, and $\partial_{\beta}^{2}\Phi^{+}(\alpha,\beta)\neq0$; below we will assume $c_{\mu}\neq c_{\nu}$.

After squaring, the equation $\partial_{\beta}\Phi_{+}(\alpha,\beta)=0$ becomes\begin{equation}\label{squared}\frac{c_{\mu}^{4}(\beta-\alpha)^{2}}{c_{\mu}^{2}(\beta-\alpha)^{2}+b_{\mu}^{2}}=\frac{c_{\nu}^{4}\beta^{2}}{c_{\nu}^{2}\beta^{2}+b_{\nu}^{2}}.\end{equation} Assume $\alpha>0$, note that (\ref{squared}) has at least two roots $\beta$: a unique solution exists in $(0,\alpha)$, and when $c_{1}>c_{2}$, a unique solution exists in $(\alpha,+\infty)$, and when $c_{1}<c_{2}$, a unique solution exists in $(-\infty,0)$. These two solutions are both simple, and depends smoothly on $\alpha$, and exactly one of them actually solves the equation $\partial_{\beta}\Phi^{+}(\alpha,\beta)=0$, depending on the signs of $b_{\mu}b_{\nu}$.

On the other hand, (\ref{squared}) simplifies to a polynomial equation of $\beta$ with degree four, so it has at most four roots. If we quotient out the two roots mentioned above, we are left with a quadratic equation\[P(\alpha)\beta^{2}+Q(\alpha)\beta+R(\alpha)=0,\] where $P(\alpha)> 0$. After completing the square, we can then write $\partial_{\beta}\Phi^{+}(\alpha,\beta)$ in the form of (\ref{qrad}). It is also clear from above that $R_{1}$ and $R_{2}$ are always separated. Next, note that $\partial_{\alpha}\partial_{\beta}\Phi^{+}(\alpha,\beta)\neq 0$; if we choose $\beta=R_{2}(\alpha)$, it then follows that $Q(\alpha)=0$ and $Q'(\alpha)=0$ cannot happen at the same point.

(5) First, note that \[\partial_{\alpha}(\Phi^{+}(\alpha,R_{2}(\alpha)))_{\alpha=\alpha_{0}}=\lambda+(\partial_{\beta}\Phi^{+})(\alpha_{0},R_{2}(\alpha_{0}))=\lambda,\] we thus have $\Phi_{+}(\alpha,R_{2}(\alpha))=\sigma(\alpha-\alpha_{0})+O(|\alpha-\alpha_{0}|^{2})$. Next, since we are close to the point $(\alpha_{0},R_{2}(\alpha_{0}))$, we will have \[\Phi^{+}(\alpha,R_{3}(\alpha))-\Phi^{+}(\alpha,R_{2}(\alpha))=\int_{0}^{\sqrt{|Q(\alpha)|}}P(\alpha,\beta+R_{2}(\alpha))\cdot(\beta^{2}-Q(\alpha))\,\mathrm{d}\beta,\]  where $P$ is (say) smooth and strictly positive. We then compute, with $\alpha$ close to $\alpha_{0}$, that this integral equals some $c|Q(\alpha)|^{3/2}$ plus an error of $O(|Q(\alpha)|^{2})$, where $c$ is nonzero but may depend on the sign of $Q(\alpha)$. Since $Q(\alpha)$ has a simple zero at $\alpha_{0}$, this proves (\ref{a33}). The proof of (\ref{more}) is similar, and will be omitted here.
\end{proof}
\subsection{The spherical harmonics decomposition}
\begin{proposition}\label{radial} For each function $f$ on $\mathbb{R}^{3}$, let its spherical harmonics decomposition be as in (\ref{decomp}) and define $S_{l}$ as in (\ref{spherical}). Then we have\begin{equation}\label{lp}\|S_{l}f\|_{L^{p}}\lesssim\|f\|_{L^{p}}\end{equation} uniformly in $l$, for $1\leq p\leq\infty$, and also\begin{equation}\label{sobolev}\|S_{l}f\|_{L^{\infty}}\lesssim 2^{l}\bigg(\sup_{r}r^{-2}\int_{|\xi|=r}|f|^{2}\,\mathrm{d}\omega\bigg)^{1/2}.\end{equation}
\end{proposition}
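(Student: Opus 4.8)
The plan is to treat $S_l$ as a fibrewise (in $r=|\xi|$) rotation-invariant operator on the sphere and reduce everything to two bounds on its convolution kernel. Writing $\xi=r\theta$ with $\theta\in\mathbb{S}^2$ and using the addition theorem for spherical harmonics, $\sum_{m=-q}^{q}Y_q^m(\theta)\overline{Y_q^m(\theta')}=\frac{2q+1}{4\pi}P_q(\theta\cdot\theta')$ with $P_q$ the Legendre polynomial, the definition (\ref{spherical}) becomes
\begin{equation}\label{kerrep}
(S_lf)(r\theta)=\int_{\mathbb{S}^2}\kappa_l(\theta\cdot\theta')\,f(r\theta')\,\mathrm{d}\sigma(\theta'),\qquad \kappa_l(t):=\sum_{q\ge0}\varphi_l(q)\,\frac{2q+1}{4\pi}\,P_q(t),
\end{equation}
where $\mathrm{d}\sigma$ is the surface measure of the unit sphere (for $l=0$ replace $\varphi_0$ by $\varphi_{\le0}$, in which case $\kappa_0$ is a fixed zonal function and the statement is trivial, so assume $l\ge1$). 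For fixed $l$ the sum in (\ref{kerrep}) has finitely many nonzero terms, so $S_l$ acts on each sphere as a finite-rank operator and the manipulations below are legitimate for any $f$ with $f(r\,\cdot\,)\in L^1(\mathbb{S}^2)$ for a.e.\ $r$. The two facts I want are the uniform bounds
\begin{equation}\label{kerbd}
\|\kappa_l\|_{L^1(\mathbb{S}^2)}\lesssim1,\qquad\|\kappa_l\|_{L^2(\mathbb{S}^2)}\lesssim2^l.
\end{equation}

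Granting (\ref{kerbd}), the bound (\ref{lp}) follows from Schur's test for the zonal kernel $\kappa_l(\theta\cdot\theta')$ — whose two marginals both equal $\|\kappa_l\|_{L^1(\mathbb{S}^2)}$ by rotation invariance — giving $\|(S_lf)(r\,\cdot\,)\|_{L^p(\mathbb{S}^2)}\le\|\kappa_l\|_{L^1(\mathbb{S}^2)}\|f(r\,\cdot\,)\|_{L^p(\mathbb{S}^2)}$, after which one raises to the power $p$, multiplies by $r^2$ and integrates in $r$ (taking $\sup_r$ when $p=\infty$) and uses $\|f\|_{L^p(\mathbb{R}^3)}^p=\int_0^\infty\|f(r\,\cdot\,)\|_{L^p(\mathbb{S}^2)}^p\,r^2\,\mathrm{d}r$. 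For (\ref{sobolev}), Cauchy--Schwarz in $\theta'$ in (\ref{kerrep}) gives, for each $r$,
\begin{equation*}
\|(S_lf)(r\,\cdot\,)\|_{L^\infty(\mathbb{S}^2)}\le\|\kappa_l\|_{L^2(\mathbb{S}^2)}\,\|f(r\,\cdot\,)\|_{L^2(\mathbb{S}^2)}\lesssim2^l\Bigl(r^{-2}\!\int_{|\xi|=r}|f|^2\,\mathrm{d}\omega\Bigr)^{1/2},
\end{equation*}
since $\int_{|\xi|=r}|f|^2\,\mathrm{d}\omega=r^2\|f(r\,\cdot\,)\|_{L^2(\mathbb{S}^2)}^2$; taking $\sup_r$ yields (\ref{sobolev}). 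The $L^2$ half of (\ref{kerbd}) is immediate: by Parseval and $\int_{\mathbb{S}^2}P_q(\theta\cdot e)P_{q'}(\theta\cdot e)\,\mathrm{d}\sigma(\theta)=\frac{4\pi}{2q+1}\delta_{qq'}$ one has $\|\kappa_l\|_{L^2(\mathbb{S}^2)}^2=\sum_q\varphi_l(q)^2\frac{2q+1}{4\pi}\lesssim2^{2l}$.

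Thus the proposition reduces to the $L^1$ bound $\|\kappa_l\|_{L^1(\mathbb{S}^2)}\lesssim1$, which I regard as the main (and essentially only) obstacle. It follows from the standard pointwise kernel estimate
\begin{equation*}
|\kappa_l(\cos\gamma)|\lesssim_M2^{2l}\,(1+2^l\gamma)^{-M}\,(1+2^l(\pi-\gamma))^{-M},\qquad 0<\gamma<\pi,
\end{equation*}
valid for every $M\ge1$, since then $\|\kappa_l\|_{L^1(\mathbb{S}^2)}=2\pi\int_0^\pi|\kappa_l(\cos\gamma)|\sin\gamma\,\mathrm{d}\gamma\lesssim1$ after splitting the integral over $\gamma\lesssim2^{-l}$, $2^{-l}\lesssim\gamma\lesssim\pi-2^{-l}$ and $\gamma\gtrsim\pi-2^{-l}$. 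To obtain this pointwise bound one inserts an integral representation for $P_q(\cos\gamma)$ (the Dirichlet--Mehler or Laplace formula), carries out the $q$-summation against the smooth bump $\varphi_l$, and applies non-stationary phase / repeated integration by parts, the smoothness of $\varphi$ rendering all boundary terms harmless; alternatively, the estimate may simply be quoted from the literature on Littlewood--Paley decompositions on spheres, being the three-dimensional instance consistent with the Mehler--Heine asymptotics $P_q(\cos\gamma)\approx J_0\big((q+\tfrac12)\gamma\big)$. Everything else is bookkeeping.
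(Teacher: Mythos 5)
Your reduction is essentially the paper's: both write $S_l$ on each sphere $\{|\xi|=r\}$ as convolution with the zonal kernel $\kappa_l(\theta\cdot\theta')=\sum_q\varphi_l(q)Z_\theta^q(\theta')$ (your $\frac{2q+1}{4\pi}P_q$ is exactly $Z_\theta^q$), deduce the $L^p$ bound from a uniform $L^1(\mathbb{S}^2)$ bound on the kernel, and get the $2^l$-weighted $L^\infty$ bound from Cauchy--Schwarz together with the Parseval computation $\|\kappa_l\|_{L^2(\mathbb{S}^2)}^2=\sum_q\varphi_l(q)^2\frac{2q+1}{4\pi}\lesssim 2^{2l}$, which is literally the paper's step (2). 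The only genuine divergence is in how the crucial kernel localization is established, and this is the entire analytic content of the proposition: the paper proves the pointwise bound $|\kappa_l|\lesssim\min\bigl(2^{2l},2^{-l}|\theta-\theta'|^{-3}\bigr)$ self-containedly, via the generating function $\sum_q Z_\theta^q(\theta')\rho^q=(1-\rho^2)(\rho^2-2\lambda\rho+1)^{-3/2}$ evaluated at $\rho=\exp(\frac{iy-1}{N})$ and the Fourier transform of $\varphi(x)e^x$ (an Abel-summation/contour trick), whereas you assert the bound $|\kappa_l(\cos\gamma)|\lesssim_M 2^{2l}(1+2^l\gamma)^{-M}(1+2^l(\pi-\gamma))^{-M}$ and defer its proof to Mehler--Dirichlet asymptotics plus non-stationary phase or to the literature. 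That estimate is indeed standard for smooth dyadic multipliers on $\mathbb{S}^2$ (and your extra antipodal factor is an overclaim you do not need: the usual form $2^{2l}(1+2^l\gamma)^{-M}$ with any $M\geq 3$ already gives $\|\kappa_l\|_{L^1}\lesssim 1$ since $\sin\gamma\leq\gamma$), so your outline is correct; but as written the heart of the proof is quoted rather than proved, and if you want a self-contained argument you should either carry out the stationary-phase computation, being careful with uniformity as $\gamma\to 0$ and $\gamma\to\pi$ where the $P_q$ asymptotics degenerate, or adopt the paper's generating-function argument, which avoids these issues entirely and only needs $M=3$ decay.
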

\begin{proof} In what follows we will assume $f$ is a function on $\mathbb{S}^{2}$, because we can always use polar coordinates $(\rho,\theta)\in\mathbb{R}^{+}\times\mathbb{S}^{2}$ and then fix $\rho$. For the standard identities about zonal harmonics, the reader may consult \cite{SW71}, Chapter 4.

(1) Assume $l\geq 1$. Recall the zonal harmonics $Z_{\theta}^{q}(\theta')$ of degree $q$, and define \[K(\theta,\theta')=\sum_{q\geq 0}\varphi(2^{-l}q)Z_{\theta}^{q}(\theta'),\]then we have \[S_{l}f(\theta)=\int_{\mathbb{S}^{2}}f(\theta')K(\theta,\theta')\,\mathrm{d}\omega(\theta').\] Therefore (\ref{lp}) would follow from a bound of $\|K(\theta,\cdot)\|_{L^{1}}$ uniform in $\theta$, which would be a consequence of the pointwise inequality\begin{equation}\label{point}|K(\theta,\theta')|\lesssim\min(2^{2l},2^{-l}|\theta-\theta'|^{-3}).\end{equation}

When $|\theta-\theta'|\lesssim 2^{-l}$ the bound (\ref{point}) will be trivial, since we have $|Z_{\theta}^{q}(\theta')|\lesssim 2q+1$ for each $q$. Now we assume $\epsilon=|\theta-\theta'|\gg 2^{-l}$, let $\lambda=\theta\cdot \theta'=1-\epsilon^{2}/2$, and write $\alpha_{r}=Z_{\theta}^{q}(\theta')$. Then we have the generating function identity\begin{equation}\label{identity}\sum_{q=0}^{\infty}\alpha_{q}\rho^{q}=\frac{1-\rho^{2}}{(\rho^{2}-2\lambda \rho+1)^{3/2}}\end{equation} for $0\leq \rho<1$. Since the function $\rho^{2}-2\lambda \rho+1$ never vanishes when $\rho\in\mathbb{C}$ and $|\rho|<1$, the right hand side of (\ref{identity}) will have a unique holomorphic continuation to the unit disc in $\mathbb{C}$, and it must equal the left hand side. If we now choose $\rho=\exp(\frac{{i}y-1}{N})$ for each $y\in\mathbb{R}$, and let $h(y)$ be the Fourier transform of $\varphi(x)e^{x}$ (which is Schwartz because $\varphi$ has compact support), we will get\begin{eqnarray}
K(\theta,\theta')&=&\sum_{q=0}^{\infty}e^{-\frac{q}{N}}\alpha_{q}\int_{\mathbb{R}}h(y)e^{{i}\frac{q}{N}y}\,\mathrm{d}y\nonumber\\
&=&\int_{\mathbb{R}}h(y)\,\mathrm{d}y\cdot\sum_{r=0}^{\infty}\alpha_{q}(e^{\frac{{i}y-1}{N}})^{q}\nonumber\\
&=&\int_{\mathbb{R}}h(y)\frac{1-e^{\frac{2({i}y-1)}{N}}}{\big(1-2\lambda e^{\frac{{i}y-1}{N}}+e^{\frac{2({i}y-1)}{N}}\big)^{3/2}}\,\mathrm{d}y.\nonumber
\end{eqnarray} Now, if $\rho=\exp(\frac{{i}y-1}{N})$, we will have $|1-\rho^{2}|\lesssim N^{-1}\langle y\rangle$. For the denominator we have\[|1-2\lambda \rho+\rho^{2}|=|\rho-\rho_{+}|\cdot|\rho-\rho_{-}|\geq (1-|\rho|)^{2}\sim N^{-2}\] where $\rho_{\pm}=\lambda\pm{i}\sqrt{1-\lambda^{2}}$, so the integral for $|y|\gtrsim N\epsilon$ will be bounded by\[N^{-1}N^{3}(N\epsilon)^{-3}\int_{|y|\gtrsim N\epsilon}\langle y\rangle^{-4}\lesssim N^{-1}\epsilon^{-3}.\] In the region $|y|\ll N\epsilon$, we will have $|\arg(\rho)|\ll \epsilon$ and $|\arg(\rho_{\pm})|=\arccos(\lambda)\sim\epsilon$, which implies $|1-2\lambda \rho+\rho^{2}|\sim\epsilon^{2}$ (note also that $1\ll N\epsilon$), which allows us to bound the integral by\[N^{-1}\epsilon^{-3}\int_{|y|\lesssim N\epsilon}\langle y\rangle^{-4}\lesssim N^{-1}\epsilon^{-3},\] and the proof is complete.

(2) Recall that $\{Y_{q}^{m}\}$ form an orthonormal basis for the space of spherical harmonics of degree $r$ (with $L^{2}$ inner product), where $-q\leq m\leq q$, then we have \begin{equation}\label{sb}Z_{\theta}^{q}(\theta')=\sum_{m=-q}^{q}Y_{q}^{m}(\theta)\overline{Y_{q}^{m}(\theta')}.\end{equation}Assume $l\geq 1$, since \[S_{l}f(\theta)=\int_{\mathbb{S}^{2}}\bigg(\sum_{q}\varphi_{1}(2^{-l}q)Z_{\theta}^{q}\bigg)(\theta')f(\theta')\,\mathrm{d}\omega(\theta'),\] we only need to bound the $L_{\theta'}^{2}$ norm of $\sum_{q}\varphi_{1}(2^{-l}q)Z_{\theta}^{q}$ for each $\theta$. Since $Z_{\theta}^{q}$ and $Z_{\theta}^{q'}$ are orthogonal for $q\neq q'$, from (\ref{sb}) and the properties of $Z_{\theta}^{r}$ we can deduce\[\bigg\|\sum_{q}\varphi_{1}(2^{-l}q)Z_{\theta}^{q}\bigg\|_{L^{2}}^{2}\lesssim\sum_{q\lesssim 2^{l}}\sum_{m=-q}^{q}|Y_{q}^{m}(\theta)|^{2}\sim \sum_{q\lesssim 2^{l}}|Z_{\theta}^{q}(\theta)|\sim\sum_{q\lesssim 2^{l}}(2q+1)\sim 2^{2l},\] which is exactly what we need.
\end{proof}

\end{document}